\documentclass[a4paper,11pt,reqno]{amsart}
\usepackage[utf8]{inputenc}
\usepackage{amsmath}
\usepackage{amssymb}
\usepackage{amsthm}
\usepackage{hyperref}
\usepackage{color}
\usepackage{a4wide}
\usepackage[arrow, matrix, curve]{xy}
\makeatletter

\newcommand{\fraka}{\mathfrak{a}}
\newcommand{\frakg}{\mathfrak{g}}

\newcommand{\frakm}{\mathfrak{m}}
\newcommand{\frakn}{\mathfrak{n}}
\newcommand{\frako}{\mathfrak{o}}

\newcommand{\fraku}{\mathfrak{u}}

\newcommand{\CC}{\mathbb{C}}
\newcommand{\DD}{\mathbb{D}}
\newcommand{\FF}{\mathbb{F}}

\newcommand{\NN}{\mathbb{N}}

\newcommand{\RR}{\mathbb{R}}
\newcommand{\ZZ}{\mathbb{Z}}
\newcommand{\calD}{\mathcal{D}}
\newcommand{\calE}{\mathcal{E}}
\newcommand{\calF}{\mathcal{F}}

\newcommand{\calL}{\mathcal{L}}
\newcommand{\calO}{\mathcal{O}}
\newcommand{\calP}{\mathcal{P}}

\newcommand{\calS}{\mathcal{S}}
\newcommand{\calT}{\mathcal{T}}
\newcommand{\calU}{\mathcal{U}}
\newcommand{\calV}{\mathcal{V}}

\newcommand{\0}{{\bf 0}}
\renewcommand{\1}{{\bf 1}}

\DeclareMathOperator{\Ind}{Ind}
\DeclareMathOperator{\tr}{tr}
\DeclareMathOperator{\ptr}{ptr}
\DeclareMathOperator{\ad}{ad}
\DeclareMathOperator{\Ad}{Ad}

\DeclareMathOperator{\HS}{HS}

\DeclareMathOperator{\id}{id}
\DeclareMathOperator{\sgn}{sgn}

\DeclareMathOperator{\diag}{diag}
\DeclareMathOperator{\blank}{--}
\DeclareMathOperator{\vol}{vol}
\DeclareMathOperator{\singsupp}{sing\,supp}
\renewcommand\Re{\operatorname{Re}}
\renewcommand\Im{\operatorname{Im}}

\theoremstyle{plain}
\newtheorem{theorem}{Theorem}[section]
\newtheorem{proposition}[theorem]{Proposition}
\newtheorem{lemma}[theorem]{Lemma}
\newtheorem{corollary}[theorem]{Corollary}

\newtheorem{fact}[theorem]{Fact}

\newtheorem{thmalph}{Theorem}
\newtheorem{conjalph}[thmalph]{Conjecture}

\theoremstyle{definition}

\newtheorem{remark}[theorem]{Remark}

\numberwithin{equation}{section}

\title[On boundary value problems for some differential operators]{On boundary value problems for some conformally invariant differential operators}
\author{Jan M\"{o}llers}
\author{Bent {\O}rsted}
\author{Genkai Zhang}

\address{Department of Mathematics, The Ohio State University, 231 West 18th Avenue, Columbus, OH 43210, USA}
\email{mollers.1@osu.edu}

\address{Institut for Matematiske Fag, Aarhus Universitet, Ny Munkegade 118, 8000 Aarhus C, Denmark}
\email{orsted@imf.au.dk}

\address{Mathematical Sciences, Chalmers University of Technology and the University of Gothenburg, SE-412 96 G\"{o}teborg, Sweden}
\email{genkai@chalmers.se}
\thanks{Research by G. Zhang partially supported by the Swedish Science Council (VR)}

\begin{document}

\begin{abstract}
We study boundary value problems for some differential operators on
Euclidean space and the Heisenberg group which are invariant under the
conformal group of a Euclidean subspace resp. Heisenberg subgroup. These operators are shown to be self-adjoint in certain Sobolev type spaces and the related boundary value problems are proven to have unique solutions in these spaces. We further find the corresponding Poisson transforms explicitly in terms of their integral kernels and show that they are isometric between Sobolev spaces and extend to bounded operators between certain $L^p$-spaces.\\
The conformal invariance of the differential operators allows us to apply unitary representation theory of reductive Lie groups, in particular recently developed methods for restriction problems.
\end{abstract}

\maketitle

\section{Introduction}

Some of the most important elliptic boundary value problems are geometric in nature; one aspect of this is that there is a Lie group of symmetries acting on the space of solutions. A classical example is provided by the harmonic functions in the complex upper half-plane, which one wants to study via their boundary values on the real axis. The classical Poisson transform
$$ Pf(x,y) = \frac{1}{\pi}\int_{-\infty}^\infty\frac{y}{(x-x')^2+y^2}f(x')dx' $$
relates the boundary value and the solution, and it is well known that it is invariant for the projective group (the M\"{o}bius group) of the real axis. It turns out that some recently studied boundary value problems (see Caffarelli--Silvestre~\cite{CS07}) exhibit similar, and perhaps a little overlooked, symmetries. Thus in this paper we apply facts from the theory of unitary representations of higher-dimensional M\"{o}bius groups to exhibit
\begin{enumerate}
\item new and natural Sobolev spaces of solutions and unitary Poisson transforms acting between boundary values and solutions, and
\item new Poisson transforms related to branching problems for the unitary representations, namely restricting from one group to a subgroup.
\end{enumerate}

Abstractly speaking our study relates solutions on a flag manifold to their restrictions (boundary values) to a natural flag submanifold; as such it is natural to extend to other geometries, and we carry this out for the case of CR geometry, another case of recent interest (see Frank et al.~\cite{FGMT15}). It seems to be a promising outlook to extend to other cases of semi-simple Lie groups and subgroups in analogy with those considered here, to relate representation theory and elliptic boundary value problems. Branching theory for unitary representations of semi-simple Lie groups contains structures that might well cast new light on more general boundary value problems. In particular natural candidates for Poisson transforms are provided by the class of
symmetry-breaking operators constructed in the recent work of T.~Kobayashi and B.~Speh~\cite{KS13}, and also J.~M\"{o}llers, Y.~Oshima, and B.~{\O}rsted~\cite{MOO13}.

We now state our results in detail.

\subsection{Euclidean space}

For $a\in\RR$ we consider the differential operator
$$ \Delta_a = x_n^2\Delta+ax_n\frac{\partial}{\partial x_n} $$
on $\RR^n$ where $\Delta$ denotes the Laplacian. In \cite{CS07} Caffarelli--Silvestre study the Dirichlet problem
$$ \Delta_au = 0, \qquad u|_{\RR^{n-1}} = f. $$
The following result provides a suitable Hilbert space setting for this problem:

\begin{thmalph}\label{thm:DirichletReal}
\begin{enumerate}
\item For $2-n<a\leq2$ the operator $\Delta_a$ is essentially self-adjoint on the homogeneous Sobolev space $\dot{H}^{\frac{2-a}{2}}(\RR^n)$. Its spectrum $\sigma(\Delta_a)=\sigma_p(\Delta_a)\cup\sigma_c(\Delta_a)$ is given by
$$ \sigma_p(\Delta_a) = \{k(k+a-1):k\in\NN,\,k<\tfrac{1-a}{2}\}, \qquad \sigma_c(\Delta_a) = \left(-\infty,-(\tfrac{1-a}{2})^2\right). $$
\item For $2-n<a<1$ and $f\in \dot{H}^{\frac{1-a}{2}}(\RR^{n-1})$ the Dirichlet problem
\begin{equation}
 \Delta_au=0, \qquad u|_{\RR^{n-1}}=f\label{eq:RealBdyValueProblem}
\end{equation}
has a unique solution $u\in\dot{H}^{\frac{2-a}{2}}(\RR^n)$. This solution is even in the variable $x_n$.
\end{enumerate}
\end{thmalph}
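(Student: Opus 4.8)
The three assertions are bound together by one reduction, so I would begin there. Since $\Delta_a$ is invariant under the dilations $x\mapsto\lambda x$ and the reflection $x_n\mapsto-x_n$, and involves the $x'$-variables only through $\Delta-\partial_{x_n}^2$, the natural first move is the partial Fourier transform in $x'\in\RR^{n-1}$. Writing $\widehat u(\xi',x_n)$ for this transform and rescaling $t=|\xi'|x_n$, the eigenvalue equation $\Delta_a u=\lambda u$ becomes the one-parameter family of ordinary differential equations
\[
 t^2 v'' + at\,v' - (t^2+\lambda)v = 0,
\]
a modified Bessel equation. Its indicial roots at the regular singular point $t=0$ are $\rho_\pm=\tfrac{1-a}{2}\pm\mu$ with $\mu=\sqrt{(\tfrac{1-a}{2})^2+\lambda}$, and the substitution $v=t^{(1-a)/2}w(t)$ turns it into Bessel's equation of order $\mu$. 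The whole theorem is then an exercise in reading off properties of this family.

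For part (1) I would first check that $\Delta_a$ is symmetric for the $\dot H^{(2-a)/2}(\RR^n)$ inner product; this is where conformal invariance enters, the Sobolev norm being the one preserved by the associated principal series. Under the partial Fourier transform $\Delta_a$ becomes a direct integral of the Bessel operators above, each a Sturm--Liouville operator on $(0,\infty)$ with singular endpoints at $0$ and $\infty$. Essential self-adjointness I would obtain by computing deficiency indices via Weyl's limit-point/limit-circle analysis; the hypothesis $2-n<a\leq2$ is what controls the endpoint behaviour at $t=0$ and yields deficiency indices $(0,0)$, hence a unique self-adjoint extension. The spectrum follows from the same picture: the oscillatory regime $\mu\in i\RR$, i.e. $\lambda<-(\tfrac{1-a}{2})^2$, produces the continuous spectrum, whereas a genuine eigenfunction occurs exactly when one indicial root is a nonnegative integer $k$, which by $\mu^2=(k-\tfrac{1-a}{2})^2$ forces $\lambda=k(k+a-1)$ and $k<\tfrac{1-a}{2}$, giving $\sigma_p$.

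For the Dirichlet problem in part (2) the relevant feature of the range $a<1$ is that $\tfrac{2-a}{2}-\tfrac12=\tfrac{1-a}{2}$, so the trace theorem makes $u\mapsto u|_{\RR^{n-1}}$ a bounded map $\dot H^{(2-a)/2}(\RR^n)\to\dot H^{(1-a)/2}(\RR^{n-1})$. To produce a solution I would take $\lambda=0$, so $\mu=\nu:=\tfrac{1-a}{2}$, and for each $\xi'\neq0$ select the solution decaying as $t\to\infty$, proportional to $t^\nu K_\nu(t)$; normalizing by the boundary value through $K_\nu(z)\sim\tfrac12\Gamma(\nu)(z/2)^{-\nu}$ gives the Fourier multiplier $\widehat u(\xi',x_n)=m(|\xi'|x_n)\widehat f(\xi')$ with $m(t)=\tfrac{2^{1-\nu}}{\Gamma(\nu)}t^\nu K_\nu(t)$ and $m(0)=1$. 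Inverting the transform produces the explicit even Poisson kernel proportional to $|x_n|^{1-a}(|x'|^2+x_n^2)^{-(n-a)/2}$, so evenness in $x_n$ is manifest; membership $u\in\dot H^{(2-a)/2}(\RR^n)$ then reduces to the finiteness of a single integral of $m$ and its derivative against the weight $(|\xi'|^2+\xi_n^2)^{(2-a)/2}$.

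Uniqueness I would get from the same ODE. The hypothesis $a>2-n$ gives $\tfrac{2-a}{2}<\tfrac n2$, so $\dot H^{(2-a)/2}(\RR^n)$ consists of genuine tempered distributions; hence on each half-space $\widehat u(\xi',\cdot)$ cannot contain the exponentially growing solution $t^\nu I_\nu(t)$ and must be a multiple $c_\pm(\xi')\,t^\nu K_\nu(t)$, whereupon a vanishing trace forces $c_\pm\equiv0$ and $u=0$. Applying this to the difference of two solutions yields uniqueness, and together with the explicitly even solution above it shows the unique solution is even. I expect the main obstacle to be the Sobolev bookkeeping: verifying the membership and norm equivalence between the bulk space $\dot H^{(2-a)/2}(\RR^n)$ and the boundary space $\dot H^{(1-a)/2}(\RR^{n-1})$ through the multiplier $m$, and giving rigorous meaning to the trace and to $\Delta_a u=0$ across the degenerate hyperplane $x_n=0$, where the weight in the divergence-form identity $\Delta_a u=|x_n|^{2-a}\operatorname{div}(|x_n|^a\nabla u)$ is singular.
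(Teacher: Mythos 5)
Your route for part (2) is sound and is essentially the Caffarelli--Silvestre multiplier argument: partial Fourier transform in $x'$, the modified Bessel equation in $t=|\xi'|x_n$, selection of the decaying branch $t^\nu K_\nu(t)$, and exclusion of $I_\nu$ for uniqueness. The paper goes a genuinely different way: existence and the Poisson kernel come from representation theory (the solution operator is, up to a constant, the transpose $B_{\mu,\nu}$ of a symmetry breaking operator, identified with the adjoint of the trace map via a multiplicity one theorem), while uniqueness and evenness are proved via the \emph{full} Fourier transform: setting $z=\xi_n/|\xi'|$ reduces $\Delta_a$ to the operator $\calD_{a,z}=(1+z^2)\frac{d^2}{dz^2}-(a-4)z\frac{d}{dz}-(a-2)$ acting on the weighted space $L^2(\RR,(1+z^2)^{\frac{2-a}{2}}dz)$, whose kernel contains exactly one admissible solution $\phi_1(z)=(1+z^2)^{\frac{a-2}{2}}$, an even function. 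The two pictures are Fourier transforms of one another in the last variable ($\phi_1$ is, up to normalization, the Fourier transform of $|t|^\nu K_\nu(|t|)$), so your uniqueness argument is a legitimate variant; the paper's version has the advantage that ``the second solution is not in the weighted $L^2$ space'' is an immediate statement, whereas in your picture you must justify that $\widehat u(\xi',\cdot)$ solves the ODE classically on each half-line for a.e.\ $\xi'$ and rule out distributional solutions supported on $\{x_n=0\}$ (a short induction using that the top-order coefficient of $\Delta_a(g\otimes\delta^{(k)})$ is $(k+1)(k+2-a)\neq0$ for $a<1$ does this).

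The genuine gap is in part (1). The paper obtains essential self-adjointness for free: $\Delta_a+(\mu+\rho)(\mu-\rho+1)$ is the Casimir operator of $G'=O(1,n)$ acting in the unitary representation $(\pi_\mu,\dot{H}^{\frac{2-a}{2}}(\RR^n))$ with $\mu=\frac{a-2}{2}$, and the Casimir of a unitary representation is essentially self-adjoint. You propose instead symmetry plus a Weyl limit-point/limit-circle computation of deficiency indices on the Bessel fibres; but deficiency indices must be computed relative to the Hilbert space inner product, and the partial Fourier transform in $x'$ does \emph{not} turn $\dot{H}^{\frac{2-a}{2}}(\RR^n)$ into a direct integral of weighted $L^2$ spaces in $x_n$: the fibre norm over $\xi'$ is $\int|\calF_{x_n}v(\xi_n)|^2(|\xi'|^2+\xi_n^2)^{\frac{2-a}{2}}d\xi_n$, a nonlocal Sobolev norm, so the standard Sturm--Liouville machinery does not apply to your decomposition as stated. (The paper's substitution $z=\xi_n/|\xi'|$ in the full Fourier picture is exactly what produces genuine weighted $L^2$ fibres.) Relatedly, your criterion for the point spectrum --- an eigenfunction occurs exactly when an indicial root is a nonnegative integer $k$ --- is correct but unjustified as written: the integrality of $k$ is forced by membership of $t^{\frac{1-a}{2}}K_\mu(t)$ in the fibre space near $x_n=0$, where the leading power $|x_n|^{\frac{1-a}{2}-\mu}$ violates the $H^{\frac{2-a}{2}}$ threshold $\beta>\frac{1-a}{2}$ unless it is an actual monomial of the correct parity; this is precisely the analysis your choice of fibration makes awkward. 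The paper instead delegates the entire spectral statement to the Kodaira--Titchmarsh decomposition of $\calD_{a,z}$ carried out in \cite{MO12} (Appendix~\ref{sec:AppendixReal}).
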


Caffarelli--Silvestre further investigate the Poisson transform $P_a$ for the Dirichlet problem, mapping the boundary values $f$ to the solutions $u$, and find its integral kernel:
\begin{equation}
 P_af(x) = c_{n,a} \int_{\RR^{n-1}} \frac{|x_n|^{1-a}}{(|x'-y|^2+x_n^2)^{\frac{n-a}{2}}} f(y) \,dy\label{eq:PoissonTrafoReal}
\end{equation}
with $c_{n,a}=\pi^{-\frac{n-1}{2}}\Gamma(\frac{n-a}{2})\Gamma(\frac{1-a}{2})^{-1}$. We show the following properties of $P_a$:

\begin{thmalph}\label{thm:IsometryReal}
Assume $2-n<a<1$.
\begin{enumerate}
\item The operator $P_a:\dot{H}^{\frac{1-a}{2}}(\RR^{n-1})\to\dot{H}^{\frac{2-a}{2}}(\RR^n)$ is isometric up to a constant. More precisely,
\begin{equation}
 \|P_af\|_{\dot{H}^{\frac{2-a}{2}}(\RR^n)}^2 = \frac{2^a\pi\Gamma(2-a)}{\Gamma(\frac{1-a}{2})\Gamma(\frac{3-a}{2})}\|f\|_{\dot{H}^{\frac{1-a}{2}}(\RR^{n-1})}^2, \qquad f\in\dot{H}^{\frac{1-a}{2}}(\RR^{n-1}).\label{eq:IsometryReal1}
\end{equation}
\item The Poisson transform $P_a$ extends to a bounded operator $P_a:L^p(\RR^{n-1})\to L^q(\RR^n)$ for any $1<p\leq\infty$ and $q=\frac{n}{n-1}p$. More precisely,
$$ \|P_af\|_{L^q(\RR^n)} \leq (2c_{n,a}^{\frac{1}{n-1}})^{\frac{1}{q}}\|f\|_{L^p(\RR^{n-1})}, \qquad f\in L^p(\RR^{n-1}). $$
\end{enumerate}
\end{thmalph}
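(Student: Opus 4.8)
The plan is to diagonalize $P_a$ by a partial Fourier transform in the tangential variables $x'$. Since the kernel in \eqref{eq:PoissonTrafoReal} depends on $x'-y$, the operator $P_a$ acts as a Fourier multiplier in $x'$, and the multiplier is the $\RR^{n-1}$-Fourier transform of $(|x'|^2+x_n^2)^{-\frac{n-a}{2}}$, which is a modified Bessel function. First I would record that, writing $\nu=\frac{1-a}{2}$,
\[ \widehat{P_af}(\xi',x_n)=\gamma_{n,a}\,|x_n|^{\nu}|\xi'|^{\nu}K_{\nu}(|x_n|\,|\xi'|)\,\widehat f(\xi') \]
for an explicit constant $\gamma_{n,a}$; this also re-derives that $x_n\mapsto\widehat{P_af}(\xi',x_n)$ is the decaying solution of the Bessel-type ODE obtained from $\Delta_a u=0$ after transforming in $x'$, so $u=P_af$ is even in $x_n$ and solves the Dirichlet problem, consistent with Theorem~\ref{thm:DirichletReal}.

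To prove part~(1) I would compute $\|u\|_{\dot H^{\frac{2-a}{2}}(\RR^n)}^2=\int_{\RR^{n-1}}\int_{\RR}(|\xi'|^2+\xi_n^2)^{\frac{2-a}{2}}|\widetilde u(\xi',\xi_n)|^2\,d\xi_n\,d\xi'$ by Plancherel, and exploit homogeneity: after the substitution $x_n\mapsto x_n/|\xi'|$ all powers of $|\xi'|$ collapse except exactly the factor $|\xi'|^{1-a}$, which matches the weight of $\dot H^{\frac{1-a}{2}}(\RR^{n-1})$, so that $\|P_af\|_{\dot H^{\frac{2-a}{2}}}^2=C\,\|f\|_{\dot H^{\frac{1-a}{2}}}^2$ with $C$ a single one-dimensional integral $I=\int_{\RR}(1+\eta^2)^{\frac{2-a}{2}}|\widehat\psi(\eta)|^2\,d\eta$, where $\psi(y)=|y|^{\nu}K_{\nu}(|y|)$. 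The point that makes the constant explicit is the classical identity $\widehat\psi(\eta)=\text{(const)}\,(1+\eta^2)^{-\nu-\frac12}=\text{(const)}\,(1+\eta^2)^{-\frac{2-a}{2}}$, whereby $I$ collapses to the Beta integral $\int_{\RR}(1+\eta^2)^{-\frac{2-a}{2}}\,d\eta=\sqrt\pi\,\Gamma(\tfrac{1-a}{2})/\Gamma(\tfrac{2-a}{2})$. Assembling $\gamma_{n,a}$, the normalization $c_{n,a}$, and this Beta value, the stated constant in \eqref{eq:IsometryReal1} should follow. The main obstacle here is purely the Gamma-function bookkeeping (applying the duplication formula to reconcile the factors with $\tfrac{2^a\pi\Gamma(2-a)}{\Gamma(\frac{1-a}{2})\Gamma(\frac{3-a}{2})}$), together with justifying the Plancherel manipulations on the homogeneous Sobolev spaces via density of Schwartz functions.

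For part~(2) I would first observe that the exponent relation $q=\frac{n}{n-1}p$ is precisely the scaling-critical one for the kernel $c_{n,a}|x_n|^{1-a}|x-(y,0)|^{-(n-a)}$; because of this, slicewise Young or Schur-type estimates are exactly logarithmically divergent ($\int_{\RR}|x_n|^{-1}\,dx_n$), so the bound cannot be obtained naively and an interpolation through a weak-type endpoint is needed. The two ingredients are: the normalization $\int_{\RR^{n-1}}P_a(x,y)\,dy=1$ (which is exactly how $c_{n,a}$ was chosen), giving at once $\|P_af\|_{L^\infty(\RR^n)}\le\|f\|_{L^\infty(\RR^{n-1})}$ and hence the endpoint $p=q=\infty$ with constant $1$; and a weak-type $(1,\frac{n}{n-1})$ estimate. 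For the latter I would combine, at each height $t=x_n$, the pointwise bound $P_af(x',t)\le c_{n,a}|t|^{1-n}\|f\|_{L^1}$ (which cuts off the slab to $|t|\le T:=(c_{n,a}\|f\|_{L^1}/\lambda)^{1/(n-1)}$) with the slicewise mass bound $\|P_af(\cdot,t)\|_{L^1(\RR^{n-1})}\le\|f\|_{L^1}$ together with Chebyshev (bounding each slice of the superlevel set by $\|f\|_{L^1}/\lambda$). Integrating over $|t|\le T$ and doubling by evenness in $x_n$ yields
\[ \bigl|\{x\in\RR^n:|P_af(x)|>\lambda\}\bigr|\le 2c_{n,a}^{\frac{1}{n-1}}\Bigl(\tfrac{\|f\|_{L^1}}{\lambda}\Bigr)^{\frac{n}{n-1}}, \]
in which the factor $2$ and the power $c_{n,a}^{1/(n-1)}$ appearing in the statement are already visible.

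Finally I would interpolate this weak-type $(1,\frac{n}{n-1})$ bound with the strong $(\infty,\infty)$ bound of constant $1$ along the critical line $q=\frac{n}{n-1}p$. Using that $P_a$ is positivity-preserving, the cleanest route is a truncation argument: split $f=(f\wedge s)+(f-s)_+$ at a level $s\sim\lambda$, estimate $P_a(f\wedge s)\le s$ by the $L^\infty$ bound and $P_a(f-s)_+$ by the weak-type bound, and integrate the distribution function by the layer-cake formula. This is the real-interpolation (Marcinkiewicz) step, and I expect it to be the delicate part: the content is to verify that, for this positive operator, the layer-cake computation returns exactly the clean constant $(2c_{n,a}^{1/(n-1)})^{1/q}$ rather than an interpolation constant contaminated by an extraneous Beta factor, and to treat $p=\infty$ (and the excluded endpoint $p=1$, where the strong estimate genuinely fails) separately.
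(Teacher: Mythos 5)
Your part (2) is the paper's proof verbatim in structure: the same two endpoints (the $L^\infty\to L^\infty$ bound with constant $1$ from positivity of the kernel together with $P_a\1_{\RR^{n-1}}=\1_{\RR^n}$, and the weak-type $(1,\tfrac{n}{n-1})$ bound obtained by confining the superlevel set to the slab $|x_n|<(c_{n,a}\|f\|_1/\lambda)^{1/(n-1)}$ and applying Chebyshev plus the normalization of the kernel on each slice), followed by real interpolation along $q=\tfrac{n}{n-1}p$. The paper simply invokes the Marcinkiewicz theorem and asserts the constant $(2c_{n,a}^{1/(n-1)})^{1/q}$ without carrying out the truncation/layer-cake computation you correctly flag as the step needed to avoid an extraneous interpolation factor, so on that point you are, if anything, more careful than the source. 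For part (1) you reach the same formula by a different road: you compute $\widehat{P_af}$ directly from the kernel \eqref{eq:PoissonTrafoReal} via the partial Fourier transform in $x'$ (Bessel function $K_\nu$) and the classical identity $\mathcal{F}\big[|y|^\nu K_\nu(|y|)\big](\eta)=\const\cdot(1+\eta^2)^{-\nu-\frac12}$, whereas the paper transforms the equation $\Delta_au=0$ under the full Euclidean Fourier transform, solves the resulting ODE $\calD_{a,z}\phi=0$ by hypergeometric functions, discards the solution not lying in $L^2(\RR,(1+z^2)^{\frac{2-a}{2}}dz)$, and normalizes by the boundary condition to obtain \eqref{eq:SolutionRealFTside1}; this second route proves uniqueness of the solution (Theorem~\ref{thm:DirichletReal}~(2)) at the same time, while yours gives a self-contained verification that the explicit kernel is indeed the Poisson operator without appealing to the representation-theoretic identification. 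Both routes collapse, after the substitution $\xi_n=|\xi'|z$, to the identical Beta integral $\int_\RR(1+z^2)^{\frac{a-2}{2}}dz=\sqrt{\pi}\,\Gamma(\tfrac{1-a}{2})/\Gamma(\tfrac{2-a}{2})$ and the same duplication-formula bookkeeping for the constant in \eqref{eq:IsometryReal1}, so I see no gap in either part.
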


Part (2) of Theorem~\ref{thm:IsometryReal} has been proven earlier by Chen~\cite{Che14}. He even shows that for the particular parameters $p=\frac{2(n-1)}{n-2+a}$ and $q=\frac{2n}{n-2+a}$ there exists a sharp constant $C>0$ such that
$$ \|P_af\|_{L^q(\RR^n)} \leq C\|f\|_{L^p(\RR^{n-1})}, \qquad f\in L^p(\RR^{n-1}), $$
and that the optimizers of this inequality are translations, dilations and multiples of the function
$$ f(y) = (1+|y|^2)^{-\frac{a+n-2}{2}}. $$
By the classical Hardy--Littlewood--Sobolev inequality we have
$\dot{H}^{\frac{1-a}{2}}(\RR^{n-1})\subseteq L^p(\RR^{n-1})$ and
$\dot{H}^{\frac{2-a}{2}}(\RR^n)\subseteq L^q(\RR^n)$ and hence our
isometry property in Theorem~\ref{thm:IsometryReal}~(1) can be viewed
as a Hilbert space version of Chen's sharp inequality; namely
we have the following commutative diagram with the respective
boundedness properties:
$$
\begin{xy}
\xymatrix{
 \dot{H}^{\frac{1-a}{2}}(\RR^{n-1}) \ar@{^{(}->}[d] \ar[r]^{P_a} & \dot{H}^{\frac{2-a}{2}}(\RR^n) \ar@{^{(}->}[d]\\
 L^p(\RR^{n-1}) \ar[r]^{P_a} & L^q(\RR^n).
}
\end{xy}
$$

While the Dirichlet problem corresponds to the eigenvalue $0$ of $\Delta_a$, one can more generally associate to every eigenvalue $k(k+a-1)$, $0\leq k<\frac{1-a}{2}$, a mixed boundary value problem
$$ \Delta_au = k(k+a-1)u, \qquad D_{a,k}u=f, $$
where
$$ D_{a,k}u = \left.\Bigg(p\Big(\sum_{j=1}^{n-1}\frac{\partial^2}{\partial x_j^2},\frac{\partial}{\partial x_n}\Big)u\Bigg)\right|_{\RR^{n-1}} $$
is a differential operator of order $k$ with $p(x,y)$ a certain polynomial on $\RR^2$ which can be defined in terms of the classical Gegenbauer polynomials. The operators $D_{a,k}$ were found by Juhl~\cite{Juh09} and the corresponding Poisson transforms $P_{a,k}$ are given in Corollary~\ref{cor:MixedBVPReal} (see Appendix~\ref{sec:AppendixReal} for the full spectral decomposition of $\Delta_a$). We remark that $D_{a,1}u=\frac{\partial u}{\partial x_n}|_{\RR^{n-1}}$ and hence the corresponding mixed boundary value problem for $k=1$ is the Neumann problem
$$ \Delta_au=au, \qquad \left.\frac{\partial u}{\partial x_n}\right|_{\RR^{n-1}}=g. $$

\subsection{The Heisenberg group}

Now consider the Heisenberg group $H^{2n+1}=\CC^n\oplus\RR$ with multiplication given by
$$ (z,t)\cdot(z',t') = (z+z',t+t'+2\Im(z\cdot\overline{z'})). $$
Further, let
$$ |(z,t)| = (|z|^4+t^2)^{\frac{1}{4}} $$
denote the norm function on $H^{2n+1}$. We study the following differential operator on $H^{2n+1}$:
$$ \calL_a = |z_n|^2\calL+a\left(x_n\frac{\partial}{\partial x_n}+y_n\frac{\partial}{\partial y_n}\right), $$
where $\calL$ is the CR Laplacian on $H^{2n+1}$ given by
$$ \calL = \sum_{j=1}^n\left(\left(\frac{\partial}{\partial x_j}+2y_j\frac{\partial}{\partial t}\right)^2+\left(\frac{\partial}{\partial y_j}-2x_j\frac{\partial}{\partial t}\right)^2\right). $$
Using the group Fourier transform on $H^{2n+1}$ one can define natural Sobolev type spaces $\dot{H}^{s}(H^{2n+1})$, $0\leq s<n+1$, analogous to the real case (see Section~\ref{sec:SobolevSpacesCplx} for details). In \cite{MOZ14} we prove that the restriction to $H^{2n-1}=\{(z,t)\in H^{2n+1}:z_n=0\}\subseteq H^{2n+1}$ defines a continuous linear operator $\dot{H}^{s}(H^{2n+1})\to\dot{H}^{s-1}(H^{2n-1})$ whenever $s>1$.

\begin{thmalph}\label{thm:DirichletCplx}
\begin{enumerate}
\item For $-2n<a\leq2$ the operator $\calL_a$ is essentially self-adjoint on the Sobolev type space $\dot{H}^{\frac{2-a}{2}}(H^{2n+1})$.
Its spectrum contains the eigenvalues $2k(2k+a)$ for $k\in\NN$, $0\leq2k<-\frac{a}{2}$.
\item For $-2n<a<0$ and $f\in\dot{H}^{-\frac{a}{2}}(H^{2n-1})$ the Dirichlet problem
\begin{equation}
 \calL_au=0, \qquad u|_{H^{2n-1}}=f\label{eq:CplxBdyValueProblem}
\end{equation}
has a unique $z_n$-radial solution $u\in\dot{H}^{\frac{2-a}{2}}(H^{2n+1})$, i.e. $u(z',z_n,t)=u(z',|z_n|,t)$.
\end{enumerate}
\end{thmalph}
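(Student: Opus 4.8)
The plan is to mirror the strategy for the Euclidean case (Theorem~\ref{thm:DirichletReal}), replacing the conformal group of $\RR^{n-1}$ by the CR-automorphism group of the Heisenberg group. First I would realize $H^{2n+1}$ as an open dense chart of the CR sphere $S^{2n+1}=\partial\HH^{n+1}_{\CC}$, on which $G=\mathrm{SU}(n+1,1)$ acts by CR automorphisms. For $s=\frac{2-a}{2}$ the space $\dot{H}^{s}(H^{2n+1})$ is the completion carrying the spherical (degenerate) principal series representation $\pi_s$ of $G$; the range $-2n<a\leq2$ is exactly $0\leq s<n+1$, in which the invariant Hermitian form is the $\dot{H}^{s}$-norm and $\pi_s$ is unitary (complementary series). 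The subgroup $G'=\mathrm{SU}(n,1)$ stabilizing the sub-CR-sphere $S^{2n-1}=\partial\HH^{n}_{\CC}$ acts on $H^{2n+1}$ preserving $H^{2n-1}=\{z_n=0\}$, and the central point is the \emph{conformal-invariance identity}: up to an additive constant, $\calL_a=d\pi_s(\Cas_{\frakg'})$, where $\Cas_{\frakg'}$ is the Casimir of $\frakg'=\mathfrak{su}(n,1)$. Establishing this by writing out the vector fields on $H^{2n+1}$ generating the $G'$-action and forming the quadratic Casimir is what makes the representation-theoretic machinery applicable, and it reduces both parts of the theorem to statements about $\pi_s|_{G'}$.

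Granting the identification, essential self-adjointness (the first claim of part~(1)) follows from unitarity: the symmetric central element $\Cas_{\frakg'}$ acts in the unitary representation $\pi_s$ by an operator that is essentially self-adjoint on the dense space of analytic vectors by Nelson's theorem, and these vectors form a core inside $\dot{H}^{s}(H^{2n+1})$. To make this quantitative I would, in parallel, pass to $z_n$-radial functions, take the group Fourier transform in the remaining $H^{2n-1}$-variables $(z',t)$, and separate variables; $\calL_a$ then becomes a one-parameter family of Sturm--Liouville operators in $r=|z_n|$ of hypergeometric type. The vanishing of the coefficient $|z_n|^2$ at $r=0$ and the behaviour at $r=\infty$ put both endpoints in Weyl's limit-point case precisely when $-2n<a\leq2$, so the deficiency indices vanish and $\calL_a$ is essentially self-adjoint.

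The eigenvalues in part~(1) come from the discrete part of the branching $\pi_s|_{G'}$. Decomposing the restriction of the degenerate principal series of $\mathrm{SU}(n+1,1)$ to $\mathrm{SU}(n,1)$ — the CR-analogue of the classical $O(n+1,1)\downarrow O(n,1)$ branching, accessible through the symmetry-breaking operators of \cite{KS13,MOO13} — yields finitely many discrete summands on the $z_n$-radial isotypic component, on each of which the Casimir acts by a scalar that, after the constant shift above, equals $2k(2k+a)$; these summands are square-integrable exactly for $0\leq2k<-\frac{a}{2}$. Equivalently, these are the discrete $L^2$-eigenvalues of the radial operators of the previous step, read off from the hypergeometric solution that is simultaneously square-integrable at $r=0$ and at $r=\infty$.

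For part~(2), take $-2n<a<0$, so $s=\frac{2-a}{2}>1$ and by \cite{MOZ14} restriction is continuous $\dot{H}^{s}(H^{2n+1})\to\dot{H}^{s-1}(H^{2n-1})=\dot{H}^{-a/2}(H^{2n-1})$, which is exactly the boundary space in the statement. The Dirichlet eigenvalue $0$ is the $k=0$ summand; on $z_n$-radial functions it occurs with multiplicity one and is a complementary-series representation of $G'$. Restriction is then a nonzero $G'$-intertwiner from this irreducible summand into $\dot{H}^{-a/2}(H^{2n-1})$, hence by Schur's lemma a scalar multiple of an isometric isomorphism; its surjectivity gives existence, with the inverse being the Heisenberg Poisson transform built from an explicit kernel as in \eqref{eq:PoissonTrafoReal}, while its injectivity — multiplicity one of the eigenvalue $0$ in the radial solution space, i.e. a one-dimensional space of $\dot{H}^{s}$-solutions of the radial ODE for each frequency — gives uniqueness. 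I expect the decisive obstacle to be precisely this branching analysis together with the $z_n$-radial reduction: identifying the discrete summands of $\pi_s|_{G'}$, pinning down their multiplicities on the radial part, and verifying that the Casimir scalars are exactly $2k(2k+a)$; in the hands-on route the same difficulty reappears as the detailed analysis of the radial hypergeometric operator, from which the limit-point property, the discrete eigenvalues, and the one-dimensional solution spaces must all be extracted.
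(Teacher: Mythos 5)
Your overall strategy is the paper's: identify $\calL_a$ (up to an additive constant) with a $G'$-invariant element of $\calU(\frakg)$ acting in a unitary complementary series representation to get essential self-adjointness, and use the branching of $\pi_\mu|_{G'}$ together with Schur's lemma and the symmetry breaking operators to produce the Poisson transform and the eigenvalues $2k(2k+a)$. But your central identity is wrong as stated. You claim $\calL_a=d\pi_s(\Cas_{\frakg'})+\const$ with $\frakg'=\mathfrak{su}(n,1)$. The element the paper actually uses (Proposition~\ref{prop:ActionCasimir}, proved in Section~\ref{sec:CasimirCplx}) is $C=C_1-C_2$, the \emph{difference} of the Casimir elements of the two factors of $G'\simeq U(1,n)\times U(1)$, and the subtraction of $C_2=-2M_n^2$ is not cosmetic: $M_n$ acts by the rotation field $y_n\partial_{x_n}-x_n\partial_{y_n}$ in the $z_n$-plane, and a short computation with the form $B(X,Y)=\tfrac12\Re\tr(XY)$ shows that $d\pi_\mu(\Cas_{\mathfrak{su}(1,n)})$ differs from $\calL_a+\const$ by a nonzero multiple of $(y_n\partial_{x_n}-x_n\partial_{y_n})^2$. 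That discrepancy vanishes on $z_n$-radial functions but not on all of $\dot{H}^{\frac{2-a}{2}}(H^{2n+1})$, and part (1) asserts essential self-adjointness of $\calL_a$ on the full space. So your Nelson-type argument, as written, proves self-adjointness of the wrong operator; you must either verify the exact identity for $C_1-C_2$ (the paper's route) or restrict the self-adjointness claim to the radial subspace, which is weaker than the theorem.

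Two further points. First, your ``parallel, quantitative'' route for self-adjointness and uniqueness via separation of variables into a Sturm--Liouville operator in $r=|z_n|$ and a limit-point analysis is not what the paper does and is left entirely unexecuted; the paper instead takes the group Fourier transform on $H^{2n+1}$, writes $\sigma_\mu(u)|_{\calP_k'\otimes\calP_\ell''}=T_{\mu,k,\ell}\otimes\id$, converts $\calL_au=0$ into the three-term recursion \eqref{eq:PDEinCplxFourierPicture} in the discrete Fock-space degree $\ell$, and combines the resulting formula \eqref{eq:FormulaTkl} with the Fourier-side boundary condition $\sum_\ell T_{\mu,k,\ell}=0$ to force $u=0$. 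You correctly identify this reduction as ``the decisive obstacle,'' but that means the uniqueness claim in part (2) is asserted rather than proved in your proposal. Second, existence requires not just surjectivity of an intertwiner by Schur's lemma but an actual identification of the adjoint of the trace map with the singular integral operator $B_{\mu,\nu}$; the paper gets this from the multiplicity-one theorem for the pair $(U(1,n+1),U(1,n))$ (Fact~\ref{fct:MultOneThms}), a nontrivial input you should cite explicitly rather than fold into ``Schur's lemma,'' especially if you insist on working with the $SU$ groups, for which the relevant multiplicity statement must be rechecked.
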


We remark that on functions that are radial in $z_n$ the operator $\calL_a$ acts as
$$ \calL_a = \rho^2\left(\calL'+(a+1)\rho^{-1}\frac{\partial}{\partial\rho}+\frac{\partial^2}{\partial\rho^2}+4\rho^2\frac{\partial^2}{\partial t^2}\right), $$
where $\rho=|z_n|$ and $\calL'$ is the CR-Laplacian on $H^{2n-1}$. The operator in paranthesis on the right hand side is (up to a scaling of the central variable $t$) the operator studied by Frank et al.~\cite{FGMT15}.

In analogy to the real case we also consider the Poisson transform $P_a:\dot{H}^{-\frac{a}{2}}(H^{2n-1})\to\dot{H}^{\frac{2-a}{2}}(H^{2n+1})$ mapping boundary values $f$ to the corresponding $z_n$-radial solutions $u$ of \eqref{eq:CplxBdyValueProblem}.

\begin{thmalph}\label{thm:IsometryCplx}
Assume $-2n<a<0$.
\begin{enumerate}
\item The Poisson transform $P_a$ is the integral operator
$$ P_af(z,t) = c_{n,a} \int_{H^{2n-1}} \frac{|z_n|^{-a}}{|(z,t)^{-1}\cdot(z',0,t')|^{2n-a}} f(z',t') \,d(z',t'), $$
where $c_{n,a}=2^{\frac{2n-a-4}{2}}\pi^{-n}\Gamma(\frac{2n-a}{4})^2\Gamma(-\frac{a}{2})^{-1}$.
\item The operator $P_a:\dot{H}^{-\frac{a}{2}}(H^{2n-1})\to\dot{H}^{\frac{2-a}{2}}(H^{2n+1})$ is isometric up to a constant. More precisely,
$$ \|P_af\|_{\dot{H}^{\frac{2-a}{2}}(H^{2n+1})}^2 = \frac{\pi a}{a-2n}\|f\|_{\dot{H}^{-\frac{a}{2}}(H^{2n-1})}^2. $$
\item For any $1<p\leq\infty$ and $q=\frac{n+1}{n}p$ the Poisson transform $P_a$ extends to a bounded operator $P_a:L^p(H^{2n-1})\to L^q(H^{2n+1})$. More precisely,
$$ \|P_af\|_{L^q(H^{2n+1})} \leq (\pi c_{n,a}^{\frac{1}{n}})^{\frac{1}{q}}\|f\|_{L^p(H^{2n-1})}, \qquad f\in L^p(H^{2n-1}). $$
\end{enumerate}
\end{thmalph}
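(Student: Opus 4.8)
The plan is to follow the template of the Euclidean case (Theorems~\ref{thm:DirichletReal} and~\ref{thm:IsometryReal}), but with the group Fourier transform on $H^{2n+1}$ replacing the ordinary one, and to exploit throughout the conformal (CR) invariance of $\calL_a$. Geometrically, $H^{2n+1}$ is the flat model of the CR sphere $S^{2n+1}$ with automorphism group $G=SU(n+1,1)$, the subgroup $H^{2n-1}=\{z_n=0\}$ corresponds to a CR-subsphere with automorphism group $G'=SU(n,1)$, and $\calL_a$ together with the Dirichlet problem \eqref{eq:CplxBdyValueProblem} is equivariant for the $G'$-action. Consequently $P_a$ is a $G'$-intertwining (symmetry-breaking) operator between the boundary representation on $\dot{H}^{-\frac{a}{2}}(H^{2n-1})$ and the restriction to $G'$ of the solution representation inside $\dot{H}^{\frac{2-a}{2}}(H^{2n+1})$. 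Since Theorem~\ref{thm:DirichletCplx}(2) already guarantees a unique $z_n$-radial solution, $P_a$ is well-defined, and this one observation drives all three parts.

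For part (1) I would first pin down the \emph{shape} of the kernel by covariance: the only $G'$-equivariant kernel of the correct bidegree is a constant multiple of $|z_n|^{-a}\,|(z,t)^{-1}\cdot(z',0,t')|^{-(2n-a)}$, the two exponents being forced by the homogeneity weights $-\tfrac{a}{2}$ and $\tfrac{2-a}{2}$ under the Heisenberg dilations. To identify the constant $c_{n,a}$ I would compute $P_a$ explicitly: applying the partial Fourier transform in the central variable $t\mapsto\tau$ and decomposing the $\CC^{n-1}$-directions into special Hermite (Laguerre) eigenfunctions of the twisted sub-Laplacian on $H^{2n-1}$, the equation $\calL_au=0$ reduces, via the radial form $\calL_a=\rho^2(\calL'+(a+1)\rho^{-1}\partial_\rho+\partial_\rho^2+4\rho^2\partial_t^2)$ with $\rho=|z_n|$, to a one-dimensional confluent-hypergeometric (Whittaker) equation in $\rho$ for each pair $(\tau,m)$. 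Selecting the solution lying in $\dot{H}^{\frac{2-a}{2}}$ (equivalently, decaying as $\rho\to\infty$) and normalizing it to match the boundary datum as $\rho\to0$ gives the Fourier-side multiplier; inverting the transform with the standard integral formula for the resulting Bessel/Gamma expression yields the kernel with the stated $c_{n,a}$.

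For part (2) I would invoke Plancherel for the group Fourier transform, which turns both $\|f\|_{\dot{H}^{-\frac{a}{2}}(H^{2n-1})}^2$ and $\|P_af\|_{\dot{H}^{\frac{2-a}{2}}(H^{2n+1})}^2$ into integrals of $|\widehat f|^2$ against explicit spectral weights, while $P_a$ acts as multiplication by the radial profile of part (1). The ratio of the two norms then collapses, for each $(\tau,m)$, to a single $\rho$-integral of a product of Whittaker functions against the Sobolev weight, and the decisive point is that this integral is \emph{independent} of $(\tau,m)$: conceptually this is Schur's lemma, since $P_a^{*}P_a$ is a positive $G'$-intertwiner of an irreducible representation and hence a scalar, so $P_a$ is automatically isometric up to one overall constant. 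Evaluating that constant reduces to a classical integral of confluent-hypergeometric functions whose value is $\tfrac{\pi a}{a-2n}$; I would cross-check it by testing on the distinguished $z_n$-radial spherical vector, where the intertwining integral is a ratio of Gamma factors.

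For part (3) the structural remark is that for fixed $z_n$ the kernel is a genuine convolution kernel on $H^{2n-1}$, so $P_af(z,t)=c_{n,a}|z_n|^{-a}(\Phi_{|z_n|}\ast f)(z'',t)$ with $\Phi_\rho$ homogeneous of the appropriate degree; the relation $q=\tfrac{n+1}{n}p$ is exactly the one making the inequality invariant under Heisenberg dilations (matching the homogeneous dimensions $2n+2$ and $2n$), which is why a single fixed Young estimate in $\rho$ diverges and one must instead run a scale-invariant Hardy-Littlewood-Sobolev argument. The endpoint $p=q=\infty$ is immediate with constant $1$, since $K\geq0$ and $P_a\1=\1$ give $\int_{H^{2n-1}}K(w,\eta)\,d\eta=1$; for $p<\infty$ I would combine this with a weighted Schur test, splitting $K=K^{1-\theta}K^{\theta}$ and pairing the normalization $\int K\,d\eta=1$ against the convergent integral of a suitable power of $K$ in $w$, whose explicit value supplies the factor $\pi\,c_{n,a}^{1/n}$. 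The main obstacle, in all three parts, is the exact evaluation of these special-function integrals: correctly identifying the decaying Whittaker solution and carrying out the inverse group Fourier transform to recover the \emph{closed-form} kernel of part (1) is delicate, and the constant $\tfrac{\pi a}{a-2n}$ in part (2) rests on the nonobvious $(\tau,m)$-independence forced by Schur's lemma, which is precisely what makes computing one scalar rather than a family of integrals tractable.
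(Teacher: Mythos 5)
Your overall architecture for parts (1) and (2) is close in spirit to the paper's, but part (3) contains a concrete gap. The mechanism you describe — a weighted Schur test splitting $K=K^{1-\theta}K^{\theta}$ and pairing $\int_{H^{2n-1}}K(w,\eta)\,d\eta=1$ against ``the convergent integral of a suitable power of $K$ in $w$'' — is the standard proof of the generalized Young inequality, and it does not close at the exponent relation $q=\tfrac{n+1}{n}p$. The required auxiliary exponent is $r$ with $1+\tfrac1q=\tfrac1r+\tfrac1p$, i.e. $r=\bigl(1-\tfrac{1}{(n+1)p}\bigr)^{-1}\in\bigl(1,\tfrac{n+1}{n}\bigr)$ for $1<p<\infty$; but $\int_{H^{2n+1}}K(w,\eta)^{r}\,dw$ converges only for $r>\tfrac{n+1}{n}$ (the integrand is homogeneous of degree $-2nr$ at infinity against the homogeneous dimension $2n+2$). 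So the ``suitable power of $K$'' you need is never integrable, which is precisely the endpoint failure you yourself flag. What actually makes the argument work — and what the paper does in Section~\ref{sec:LpCplx} — is the weak-type endpoint: the uniform bound $K(w,\eta)\leq c_{n,a}|z_n|^{-2n}$ confines the superlevel set $\{|P_af|>\lambda\}$ to a slab $\{|z_n|<b\}$, Chebyshev plus Fubini plus $P_a\1=\1$ (Lemma~\ref{lem:ExplicitBVConstantCplx}) give $\|P_af\|_{\frac{n+1}{n},w}\leq(\pi^nc_{n,a})^{1/(n+1)}\|f\|_1$, and Marcinkiewicz interpolation against the trivial $L^\infty\to L^\infty$ bound finishes. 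Your phrase ``scale-invariant Hardy--Littlewood--Sobolev argument'' points in the right direction, but the weak-type $(1,\tfrac{n+1}{n})$ estimate is the missing ingredient and must be proved, not assumed.

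For parts (1) and (2) your route is legitimate but differs from the paper's in two respects worth noting. First, your claim that $G'$-covariance and homogeneity alone force the kernel is glib: translation, dilation and rotation invariance determine the kernel only on the open dense orbit, and ruling out contributions supported on the singular set $\{z_n=0\}$ is exactly where the paper invokes the multiplicity-one theorem of Sun--Zhu (Fact~\ref{fct:MultOneThms}) to identify $T^*$ with the symmetry-breaking kernel $B_{\mu,\nu}$ of Kobayashi--Speh/M\"ollers--Oshima--{\O}rsted (Theorem~\ref{thm:PoissonTransformsAsSymBreakingOps}); your Fourier-side ODE analysis could substitute for this, but then the ``delicate'' inversion back to the closed-form kernel is the whole content of part (1), not a footnote. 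Second, the paper fixes $c_{n,a}$ not by Fourier inversion but by evaluating $P_a$ on the $K'$-invariant vector $f=((1+|z'|^2)^2+t'^2)^{-\frac{a+2n}{4}}$ via the Cayley transform and a hypergeometric identity (Section~\ref{sec:ExplicitInvBdyValuesCplx}), and it obtains the constant $\tfrac{\pi a}{a-2n}$ in part (2) by an explicit Fock-space computation summing ${}_2F_1(1,x;y;1)$ (Section~\ref{sec:IsometryCplx}), rather than by your Schur-plus-test-vector shortcut; the latter is valid in principle but still requires computing the $\dot H^{\frac{2-a}{2}}$-norm of the explicit $K'$-invariant solution from the definition \eqref{eq:SobolevNormCplx}, which is comparable in effort. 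The decisive structural point you correctly identify — that Schur's lemma makes $P_a^*P_a$ scalar, so only one number ever needs to be computed — is indeed the paper's organizing principle.
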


Also in the Heisenberg case each eigenvalue $2k(2k+a)$ of $\calL_a$
corresponds to a mixed boundary value problem. The corresponding
differential restriction operators
$D_{a,k}:\dot{H}^{\frac{2-a}{2}}(H^{2n+1})\to\dot{H}^{-\frac{a}{2}-2k}(H^{2n-1})$
were first constructed in \cite{MOZ14} and exist also on more general two-step nilpotent groups. In Theorem~\ref{thm:MixedBVPCplx} we study the mixed boundary value problems
$$ \calL_au=2k(2k+a)u, \qquad D_{a,k}u=f, $$
and find their Poisson transforms (see Appendix~\ref{sec:AppendixCplx} for details).

We remark that Frank et al.~\cite{FGMT15} also show an isometry estimate for $P_a$ as in Theorem~\ref{thm:IsometryCplx}~(2), but they neither find the integral kernel of $P_a$, nor do they investigate $L^p$-boundedness of it.

For $p=\frac{4n}{2n+a}$ and $q=\frac{4n+4}{2n+a}$ we know that $\dot{H}^{-\frac{a}{2}}(H^{2n-1})\subseteq L^p(H^{2n-1})$ and $\dot{H}^{\frac{2-a}{2}}(H^{2n+1})\subseteq L^q(H^{2n+1})$ by the Hardy--Littlewood--Sobolev inequality for the Heisenberg group (see e.g. \cite{FS74}). Hence, there is a diagram as in the real case:
$$
\begin{xy}
\xymatrix{
 \dot{H}^{-\frac{a}{2}}(H^{2n-1}) \ar@{^{(}->}[d] \ar[r]^{P_a} & \dot{H}^{\frac{2-a}{2}}(H^{2n+1}) \ar@{^{(}->}[d]\\
 L^p(H^{2n-1}) \ar[r]^{P_a} & L^q(H^{2n+1}).
}
\end{xy}
$$
It is an open question whether Chen's results for the real case have a counterpart in the Heisenberg situation. We formulate this as conjecture:

\begin{conjalph}
There exists a sharp constant $C>0$ such that
$$ \|P_af\|_{L^q(H^{2n+1})} \leq C\|f\|_{L^p(H^{2n-1})} $$
for $p=\frac{4n}{2n+a}$ and $q=\frac{4n+4}{2n+a}$ and the optimizers are translations, dilations and multiples of the function
$$ f(z',t') = ((1+|z'|^2)^2+t'^2)^{-\frac{a+2n}{4}}. $$
\end{conjalph}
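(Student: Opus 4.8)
The plan is to exploit the CR-conformal invariance of the inequality and reduce the problem to a sharp Hardy--Littlewood--Sobolev-type estimate on the CR spheres, where a competing-symmetries argument in the spirit of Carlen--Loss and Frank--Lieb can be brought to bear. The exponents $p=\frac{4n}{2n+a}$ and $q=\frac{4n+4}{2n+a}$ are precisely the conformal weights for which the functional $\|P_af\|_{L^q(H^{2n+1})}/\|f\|_{L^p(H^{2n-1})}$ is invariant under the CR-conformal group $\mathrm{SU}(n,1)$ of $H^{2n-1}$, realized (via the embedding $\mathrm{SU}(n,1)\hookrightarrow\mathrm{SU}(n+1,1)$) as the subgroup of the CR-conformal group of $H^{2n+1}$ fixing the sub-CR-manifold $H^{2n-1}$. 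This is exactly the symmetry-breaking situation underlying Theorem~\ref{thm:IsometryCplx}, and the conjectured optimizer should be the distinguished $\mathrm{SU}(n,1)$-spherical vector.

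First I would make the conformal covariance explicit. Via the Cayley transform one passes from $H^{2n+1}$, $H^{2n-1}$ to the CR spheres $S^{2n+1}=\partial\mathbb{B}^{n+1}$, $S^{2n-1}=\partial\mathbb{B}^n$; under this transform the Poisson operator $P_a$ of Theorem~\ref{thm:IsometryCplx}(1) becomes a symmetry-breaking integral operator intertwining the degenerate principal series representations of $\mathrm{SU}(n+1,1)$ and $\mathrm{SU}(n,1)$ attached to the parameters above, in the framework of \cite{KS13,MOO13}. The candidate extremizer $f(z',t')=((1+|z'|^2)^2+t'^2)^{-\frac{a+2n}{4}}$ is the Cayley pull-back of the constant function $\mathbf{1}$ on $S^{2n-1}$, so on the compact model the conjecture is equivalent to the assertion that $\mathbf{1}$ maximizes the ratio and that every maximizer lies on its $\mathrm{SU}(n,1)$-orbit.

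Second, I would run a competing-symmetries iteration, which has the advantage of bypassing the delicate existence/concentration-compactness theory forced on us by the noncompact symmetry group. Following Carlen--Loss, one alternates a suitable inversion $U$ (a specific element of $\mathrm{SU}(n,1)$ not preserving $\mathbf{1}$) with a symmetrization $R$; the point is to show that along the sequence generated by $R\circ U$ the ratio is nondecreasing and converges to a function that is simultaneously fixed by both operations. One then classifies these common fixed points: invariance under $R$ forces radial symmetry, while the interaction with $U$ pins the function down to the translates and dilates of $\mathbf{1}$. Inserting $f=\mathbf{1}$ into the resulting extremal identity, and using the explicit kernel of Theorem~\ref{thm:IsometryCplx}(1), would yield the sharp constant $C$ as an explicit expression in Gamma factors.

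The main obstacle is precisely the symmetrization step $R$. On the Heisenberg group there is no Riesz rearrangement inequality, so the classical route available to Chen~\cite{Che14} in the Euclidean case breaks down --- this is exactly why the statement is only a conjecture. The likely way around it is to adapt the \emph{rearrangement-free} method developed by Frank and Lieb for the sharp CR Hardy--Littlewood--Sobolev inequality: one replaces symmetrization by a reflection-averaging operator adapted to the CR involution, for which a genuine rearrangement-type inequality does hold, and feeds this into the competing-symmetries scheme. A further difficulty specific to our setting is that $P_a$ is genuinely symmetry-breaking, relating two CR manifolds of \emph{different} dimension, so the reflection must be chosen compatibly with the embedding $\mathrm{SU}(n,1)\hookrightarrow\mathrm{SU}(n+1,1)$; establishing that such a reflection exists, and that the associated inequality is strict off the conjectured orbit, is where I expect the real work --- and the real risk of failure --- to lie.
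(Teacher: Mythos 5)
This statement is Conjecture~A of the paper: the authors explicitly describe it as an open question and give no proof, so there is nothing in the paper to compare your argument against. What you have written is a research program, not a proof, and you say as much yourself. The setup you describe is consistent with the paper's framework: the exponents $p=\frac{4n}{2n+a}$, $q=\frac{4n+4}{2n+a}$ are indeed the ones for which $P_a$ intertwines the isometric Banach space representations $\widetilde{\tau}_{-t}$ on $L^p(H^{2n-1})$ and $\widetilde{\pi}_{-s}|_{G'}$ on $L^q(H^{2n+1})$, so the ratio is $G'$-invariant, and the candidate optimizer $f(z',t')=((1+|z'|^2)^2+t'^2)^{-\frac{a+2n}{4}}$ is exactly the $K'$-invariant vector computed in Section~\ref{sec:ExplicitInvBdyValuesCplx}, i.e.\ the Cayley pullback of $\mathbf{1}$ on $S^{2n-1}$. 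All of that is correct but only restates why the conjecture is plausible.

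The genuine gap is the one you name, and it is not a technicality that can be deferred: the entire competing-symmetries mechanism of Carlen--Loss rests on a symmetrization $R$ that does not decrease the $L^p$ norm of $f$ while not decreasing $\|P_af\|_{L^q}$, and no such rearrangement is known on the Heisenberg group. Your proposed fix --- importing the rearrangement-free method of Frank--Lieb --- does not slot into a competing-symmetries iteration, because their argument is structurally different: it first establishes \emph{existence} of optimizers (via the conformal group acting on the compact CR sphere together with a subcritical approximation), and then identifies them by a second-variation/spectral-gap computation around $\mathbf{1}$, not by an iteration that needs a monotone symmetrization at each step. Neither ingredient is supplied here: you have no existence argument (concentration-compactness modulo the noncompact group $U(1,n)$ acting on a codimension-$2$ symmetry-breaking functional is itself nontrivial), and the second-variation analysis would require diagonalizing $P_a^*P_a$ on $L^p(S^{2n-1})$ near $\mathbf{1}$ and comparing eigenvalues, which is not done and is complicated by the fact that the operator maps between CR spheres of different dimensions. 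So the proposal identifies the right landscape but does not close, or even substantially narrow, the gap; the statement remains a conjecture.
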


\subsection{Relation to representation theory of real reductive groups}

Our proofs use representation theory of the rank one real reductive groups $G=O(1,n+1)$ and $G=U(1,n+1)$. The nilpotent groups $\RR^n$ and $H^{2n+1}=\CC^n\oplus\RR$ occur as nilradical $\overline{N}$ of a parabolic subgroup of $G$ and the homogeneous Sobolev spaces $\dot{H}^s(\overline{N})$ are the Hilbert spaces on which certain irreducible unitary representations $\pi_{-s}$ of $G$ can be realized. These representations are called complementary series and we briefly recall their construction in Section~\ref{sec:CSReps}.

Restricting the representations $\pi_{-s}$ to the subgroup $G'=O(1,n)$ resp. $G'=U(1,n)$ they decompose multiplicity-free into the direct integral of irreducible unitary representations of $G'$. This direct integral has a discrete part if $s>\frac{1}{2}$ in the case of $\overline{N}=\RR^n$ and $s>1$ in the case of $\overline{N}=H^{2n+1}$. One of the discrete summands is precisely the space of solutions $u\in\dot{H}^s(\overline{N})$ to the equation $L_au=0$ for $a=2(1-s)$ where $L_a=\Delta_a$ for $\overline{N}=\RR^n$ and $L_a=\calL_a$ for $\overline{N}=H^{2n+1}$ (allowing only $z_n$-radial solutions in the case $\overline{N}=H^{2n+1}$). The reason for this is that the operator $L_a$ is invariant under the action of $G'$ and hence acts as a scalar on each irreducible summand of $G'$ in the decomposition of $\pi_{-s}|_{G'}$ by Schur's Lemma. In fact, $L_a$ is the Casimir operator of $G'$ in the restricted representation $\pi_{-s}|_{G'}$, and the spectral decomposition of $L_a$ acting on $\dot{H}^s(\overline{N})$ is essentially the decomposition into irreducible $G'$-representations.

This direct summand in the decomposition of $\dot{H}^s(\overline{N})$
consisting of solutions to $L_au=0$ is itself a complementary series
representation of the subgroup $G'$. More precisely, it
is isomorphic to the representation $\tau_{-t}$ of $G'$ on the homogeneous Sobolev space $\dot{H}^{-t}(\overline{N}')$ where $t=s-\frac{1}{2}$ for $\overline{N}'=\RR^{n-1}$ and $t=s-1$ for $\overline{N}'=H^{2n-1}$. The projection onto this direct summand is given by the trace map
$$ \dot{H}^s(\overline{N}) \to \dot{H}^t(\overline{N}'), \quad u\mapsto u|_{\overline{N}'} $$
which is shown to be $G'$-equivariant (also called \textit{$G'$-intertwining}), i.e.
$$ (\pi_{-s}(g)u)|_{\overline{N}'} = \tau_{-t}(g)(u|_{\overline{N}'}) \qquad \forall\,g\in G'. $$
The trace map is a partial isometry and its adjoint is (up to a constant) the corresponding Poisson transform
$$ P_a: \dot{H}^t(\overline{N}') \to \dot{H}^s(\overline{N}), $$
constructing solutions to the boundary value problem
$$ L_au=0, \qquad u|_{\overline{N}'}=f. $$
We remark that the Poisson transform is also $G'$-intertwining, i.e.
$$ P_a(\tau_{-t}(g)f) = \pi_{-s}(g)(P_af) \qquad \forall\,g\in G'. $$

These integral operators from representations of $G'$ to representations of $G$, intertwining the action of $G'$, have recently been investigated for $(G,G')=(O(1,n+1),O(1,n))$ by Kobayashi--Speh~\cite{KS13} and for more general pairs of groups $(G,G')$ by M\"{o}llers--{\O}rsted--Oshima~\cite{MOO13}. More precisely, these works construct the transpose operators, mapping from representations of $G$ to representations of the subgroup $G'$, which are therefore called \textit{symmetry breaking operators}.

We remark that also the $L^p$-$L^q$ boundedness of the Poisson transform has an interpretation in terms of representation theory. In fact, the unitary Hilbert space representations $\pi_{-s}$ on $\dot{H}^s(\overline{N})$ extend to isometric Banach space representations $\widetilde{\pi}_{-s}$ on $L^q(\overline{N})$ where $q=\frac{2n}{n-2s}$ for $\overline{N}=\RR^n$ and $q=\frac{4n+4}{2n+2-s}$ for $\overline{N}=H^{2n+1}$. Extending $\tau_{-t}$ similarly to $\widetilde{\tau}_{-t}$ on $L^p(\overline{N}')$, the Poisson transform provides a bounded $G'$-intertwining embedding of the Banach space representation $(\widetilde{\tau}_{-t},L^p(\overline{N'}))$ into $(\widetilde{\pi}_{-s}|_{G'},L^q(\overline{N}))$

\subsection{Structure of the paper}

In Section~\ref{sec:CSReps} we recall the construction of complementary series representations of the groups $G=O(1,n+1)$ and $G=U(1,n+1)$ and show that they have natural realizations on homogeneous Sobolev spaces. The restriction of complementary series representations of $G$ to the subgroups $G'=O(1,n)$ and $G'=U(1,n)$ is addressed in Section~\ref{sec:SymBreakingOpsVsPoissonTrafos}. Here we summarize the construction of symmetry breaking operators from \cite{KS13,MOO13} and relate them to the Poisson transforms for the boundary value problems \eqref{eq:RealBdyValueProblem} and \eqref{eq:CplxBdyValueProblem}. For particular boundary values we compute in Section~\ref{sec:ExplicitSolutions} the explicit solutions to the boundary value problems, thus finding the right normalization constants for the Poisson transforms. In Sections~\ref{sec:RealCase} and \ref{sec:CplxCase} we finally carry out several computations for the real and complex case separately, such as computing the Casimir operators, showing uniqueness of solutions to the boundary value problems, proving isometry of the Poisson transforms, and establishing the $L^p$-$L^q$ boundedness properties.

More details on the decomposition of the restriction of complementary series representations of $G$ to $G'$ are given in the Appendix. For the real case we indicate in Appendix~\ref{sec:AppendixReal} how the full spectral decomposition of $\Delta_a$ in $\dot{H}^{\frac{2-a}{2}}(\RR^n)$ is obtained. This was carried out in detail by M\"{o}llers--Oshima~\cite{MO12}. For the complex case the full decomposition is not yet know. However, in Appendix~\ref{sec:AppendixCplx} we summarize results of our previous work~\cite{MOZ14} where we construct part of the discrete spectrum of $\calL_a$ in $\dot{H}^{\frac{2-a}{2}}(H^{2n+1})$.

\section{Complementary series representations}\label{sec:CSReps}

We recall the complementary series representations of the rank one groups $G=U(1,n+1;\FF)$, $\FF=\RR,\CC$.

\subsection{Rank one groups}

Let $G=U(1,n+1;\FF)$, $\FF=\RR,\CC$, $n\geq1$, realized as the group of $(n+2)\times(n+2)$ matrices over $\FF$ leaving the sesquilinear form
$$ (x,y) \mapsto x_0\overline{y}_0-x_1\overline{y}_1-\cdots-x_{n+1}\overline{y}_{n+1} $$
invariant. Fix the maximal compact subgroup $K=U(1;\FF)\times U(n+1;\FF)$ which is the fixed point group of the Cartan involution $\theta(g)=(g^*)^{-1}$. Choose
$$ H := \left(\begin{array}{ccc}0&1&\\1&0&\\&&\0_n\end{array}\right), $$
and put $\fraka=\RR H$ and $A=\exp(\fraka)$. Let $\alpha\in\fraka^*$ be such that $\alpha(H)=1$ then $\frakg$ has the grading
$$ \frakg = \frakg_{-2\alpha}\oplus\frakg_{-\alpha}\oplus\frakg_0\oplus\frakg_\alpha\oplus\frakg_{2\alpha}, $$
where $\frakg_\lambda=\{X\in\frakg:[H,X]=\lambda(H)X\}$. Note that $\frakg_{\pm2\alpha}=0$ for $\FF=\RR$. We put $M:=Z_K(\fraka)$ and let $\frakm$ denote its Lie algebra. Then $M=\Delta U(1;\FF)\times U(n;\FF)$ and $\frakg_0=\frakm\oplus\fraka$. Further, let
$$ \frakn := \frakg_\alpha\oplus\frakg_{2\alpha}, \qquad \overline{\frakn}=\theta\frakn = \frakg_{-\alpha}\oplus\frakg_{-2\alpha} $$
and denote by
$$ N := \exp(\frakn), \qquad \overline{N} := \exp(\overline{\frakn}) $$
the corresponding groups. Then $P=MAN$ is a parabolic subgroup of $G$. Let $\rho:=\frac{1}{2}\tr\ad|_{\frakn}\in\fraka^*$.

The element $w_0=\diag(-1,1,\ldots,1)\in K$ represents the non-trivial element in the Weyl group $W=N_K(\fraka)/Z_K(\fraka)$ and acts on $\fraka$ by $-\1$. The parabolic $\overline{P}=\theta P=MA\overline{N}$ opposite to $P$ is conjugate to $P$ via $w_0$, i.e. $w_0Pw_0^{-1}=\overline{P}$.

We identify $\overline{N}\simeq\FF^n\oplus\Im\FF$ by
\begin{equation}
 \FF^n\oplus\Im\FF\to\overline{N}, \quad (z,t)\mapsto n_{(z,t)}:=\exp\left(\begin{array}{ccc}-t/2&-t/2&z^*\\t/2&t/2&-z^*\\z&z&\0_n\end{array}\right),\label{eq:IdentificationNbar}
\end{equation}
where $\Im\FF=\{z\in\FF:z+\overline{z}=0\}$. Under this identification the group multiplication on $\FF^n\oplus\Im\FF$ is given by
$$ (z,t)\cdot(z',t') = (z+z',t+t'+2\Im(z\cdot\overline{z'})). $$
For $(z,t)\in\overline{N}=\FF^n\oplus\Im\FF$ let
$$ |(z,t)| := (|z|^4+|t|^2)^{\frac{1}{4}} $$
denote the norm function. Note that for $\FF=\RR$ we have $\overline{N}\simeq\RR^n$ and for $\FF=\CC$ we have $\overline{N}\simeq H^{2n+1}$.

\subsection{Complementary series representations}

Identify $\fraka_\CC^*\cong\CC$ by $\mu\mapsto\mu(H)$ so that $\rho=\frac{n}{2}$ for $\FF=\RR$ and $\rho=n+1$ for $\FF=\CC$. For $\mu\in\CC$ consider the principal series representations (smooth normalized parabolic induction)
$$ \Ind_P^G(\1\otimes e^\mu\otimes\1) = \{f\in C^\infty(G):f(gman)=a^{-\mu-\rho}f(g)\,\forall\,g\in G,man\in MAN\}. $$
Since $\overline{N}MAN\subseteq G$ is open dense, restriction to $\overline{N}$ realizes these representations as $(\pi_\mu^\infty,I_\mu^\infty)$ with $\calS(\overline{N})\subseteq I_\mu^\infty\subseteq C^\infty(\overline{N})$.

For $\mu\in\RR$ the representation $\pi_\mu^\infty$ is unitarizable if and only if $\mu\in(-\rho,\rho)$. The invariant norm on $I_\mu^\infty$ is for $\FF=\RR$ given by
\begin{equation}
 \|f\|_\mu^2 = \frac{2^{-2\mu}\Gamma(\rho-\mu)}{\pi^{\frac{n}{2}}\Gamma(\mu)} \int_{\overline{N}}\int_{\overline{N}} |n_1n_2^{-1}|^{2(\mu-\rho)} f(n_1)\overline{f(n_2)} \,dn_1 \,dn_2,\label{eq:CSNormReal}
\end{equation}
and for $\FF=\CC$ given by
\begin{equation}
 \|f\|_\mu^2 = \frac{2^{n-1}\Gamma(\frac{\rho+\mu}{2})\Gamma(\frac{\rho-\mu}{2})}{\pi^{n+1}\Gamma(\mu)}\cdot\int_{\overline{N}}\int_{\overline{N}} |n_1n_2^{-1}|^{2(\mu-\rho)} f(n_1)\overline{f(n_2)} \,dn_1 \,dn_2\label{eq:CSNormCplx}
\end{equation}
(or the corresponding regularization of the integral). Note that the norms are normalized such that for $\mu=0$ the norm $\|\blank\|_0$ is equal to the $L^2$-norm on $\RR^n$ resp. $H^{2n+1}$.

Let $I_\mu$ be the completion of $I_\mu^\infty$ with respect to this norm and extend the smooth representation $(\pi_\mu^\infty,I_\mu^\infty)$ to a unitary representation $(\pi_\mu,I_\mu)$. The smooth vectors in this realization are given by $I_\mu^\infty$.

The bilinear pairing
$$ I_\mu^\infty\times I_{-\mu}^\infty\to\CC, \quad (f_1,f_2)\mapsto\int_{\overline{N}}f_1(\overline{n})f_2(\overline{n})\,d\overline{n} $$
is non-degenerate and $G$-invariant for
$\pi_\mu^\infty\times\pi_{-\mu}^\infty$. Hence we can identify
$I_\mu^\infty$ with a subrepresentation of the dual representation
$(I_{-\mu}^\infty)^*$. We call
$(\pi_\mu^{-\infty},I_\mu^{-\infty}):=((\pi_{-\mu}^\infty)^*,(I_{-\mu}^\infty)^*)$
the \textit{distribution globalization} 
of $(\pi_\mu^\infty,I_\mu^\infty)$ since
$$ \calE'(\overline{N})\subseteq I_\mu^{-\infty}\subseteq\calS'(\overline{N}). $$
Altogether we obtain embeddings
\begin{equation}
 \calS(\overline{N})\subseteq I_\mu^\infty\subseteq I_\mu^{-\infty}\subseteq\calS'(\overline{N}).\label{eq:DistributionVectorsEmbeddings}
\end{equation}
If now $\mu\in(-\rho,\rho)$ then $I_\mu^{-\infty}$ is the space of distribution vectors of $I_\mu$ and we have the following embeddings of representations:
\begin{equation}
 (\pi_\mu^\infty,I_\mu^\infty) \subseteq (\pi_\mu,I_\mu) \subseteq (\pi_\mu^{-\infty},I_\mu^{-\infty}).\label{eq:DistributionVectorsEmbeddingsUnitary}
\end{equation}

\subsection{Relation to homogeneous Sobolev spaces}

We explain how the Hilbert spaces $I_\mu$, $\mu\in[0,\rho)$, can be viewed as homogeneous Sobolev spaces on the nilpotent group $\overline{N}$. For this we consider the two cases $\FF=\RR$ and $\FF=\CC$ separately.

\subsubsection{The real case}\label{sec:SobolevSpacesReal}

Consider the Euclidean Fourier transform
$$ \calF_{\RR^n}:\calS'(\RR^n)\to\calS'(\RR^n), \quad \calF_{\RR^n}u(\xi) = \widehat{u}(\xi) = (2\pi)^{-\frac{n}{2}}\int_{\RR^n} e^{-ix\cdot\xi} u(x) \,dx. $$
Using the Plancherel formula
$$ \int_{\RR^n} u(x)\overline{v(x)}\,dx = \int_{\RR^n} \widehat{u}(\xi)\overline{\widehat{v}(\xi)}\,d\xi $$
and the convolution formula
$$ \widehat{u*v}(\xi) = (2\pi)^{\frac{n}{2}}\widehat{u}(\xi)\widehat{v}(\xi) $$
we can express the norm $\|\blank\|_\mu$ on $I_\mu$ defined by \eqref{eq:CSNormReal} as
\begin{align*}
 \|u\|_\mu^2 &= \frac{2^{-2\mu}\Gamma(\frac{n}{2}-\mu)}{\pi^{\frac{n}{2}}\Gamma(\mu)} \int_{\RR^n} (|\blank|^{2\mu-n}*u)(x)\overline{u}(x)\,dx\\
 &= \frac{2^{-2\mu+\frac{n}{2}}\Gamma(\frac{n}{2}-\mu)}{\Gamma(\mu)} \int_{\RR^n} |\widehat{u}(\xi)|^2 \widehat{|\blank|^{2\mu-n}}(\xi)\,d\xi.
\end{align*}
Now, by \cite[Chapter II, Section 3.3]{GS64} we have
$$ \widehat{|\blank|^\lambda} = \frac{2^{\lambda+\frac{n}{2}}\Gamma(\frac{\lambda+n}{2})}{\Gamma(-\frac{\lambda}{2})} |\blank|^{-\lambda-n} $$
and hence
$$ \|u\|_\mu^2 = \int_{\RR^n} |\widehat{u}(\xi)|^2 |\xi|^{-2\mu}\,d\xi. $$
Therefore $I_\mu=\dot{H}^{-\mu}(\RR^n)$ for $\mu\in(-\frac{n}{2},\frac{n}{2})$, where
$$ \dot{H}^s(\RR^n) = \left\{u\in\calS'(\RR^n):\widehat{u}\in L^2_{\text{loc}}(\RR^n),\,\|u\|_{\dot{H}^s(\RR^n)}^2:=\int_{\RR^n}|\widehat{u}(\xi)|^2|\xi|^{2s}\,d\xi<\infty\right\} $$
is the \textit{homogeneous Sobolev space of degree $s$}. Note that by our previous calculations
\begin{equation}
 \|u\|_\mu = \|u\|_{\dot{H}^{-\mu}(\RR^n)} = \|\widehat{u}\|_{L^2(\RR^n,|\xi|^{-2\mu}\,d\xi)}.\label{eq:IsometryFourierTrafoSobolevL2}
\end{equation}

\subsubsection{The complex case}\label{sec:SobolevSpacesCplx}

For $\mu\in\RR^\times$ let
$$ \calF_\mu := \left\{\xi\in\calO(\CC^n):\|\xi\|_\mu^2=\int_{\CC^n}|\xi(w)|^2e^{-2|\mu||w|^2}\,dw<\infty\right\} $$
and define an irreducible unitary representation $\sigma_\mu$ of $H^{2n+1}$ on $\calF_\mu$ by
$$ \sigma_\mu(z,t)\xi(w) := \begin{cases}e^{i\mu t+2\mu(w\cdot z-|z|^2/2)}\xi(w-\overline{z}) & \mbox{for $\mu>0$,}\\e^{i\mu t+2\mu(w\cdot\overline{z}+|z|^2/2)}\xi(w+z) & \mbox{for $\mu<0$.}\end{cases} $$
For $u\in L^1(H^{2n+1})$ let
$$ \sigma_\mu(u) := \int_{H^{2n+1}} u(z,t)\sigma_\mu(z,t) \,d(z,t) $$
denote the group Fourier transform. Then $\sigma_\mu$ extends to $L^2(H^{2n+1})$ and we have the Plancherel formula
$$ \|u\|_{L^2(H^{2n+1})}^2 = \frac{2^{n-1}}{\pi^{n+1}} \int_\RR \|\sigma_\mu(u)\|_{\HS(\calF_\mu)}^2 |\mu|^n \,d\mu, $$
and the inversion formula
$$ u(z,t) = \frac{2^{n-1}}{\pi^{n+1}} \int_\RR \tr(\sigma_\mu(z,t)^*\sigma_\mu(u)) |\mu|^n \,d\mu, $$
where $\|T\|_{\HS(\calF_\mu)}^2=\tr(T^*T)$ denotes the Hilbert--Schmidt norm of an operator $T$ on $\calF_\mu$.

The space $\calP$ of polynomials on $\CC^n$ is dense in $\calF_\mu$ and we write $\calP_m$ for its subspace of homogeneous polynomials of degree $m$. The subspaces $\calP_m$ are pairwise orthogonal and we denote by $P_m:\calF_\mu\to\calP_m$ the orthogonal projections. Then we can write the Hilbert--Schmidt norm as
$$ \|T\|_{\HS(\calF_\mu)}^2 = \sum_{m=0}^\infty \|P_m\circ T\|_{\HS(\calF_\mu)}^2. $$
and hence, the $L^2$-norm of a function $u\in L^2(H^{2n+1})$ is given by
$$ \|u\|_{L^2(H^{2n+1})}^2 = \frac{2^{n-1}}{\pi^{n+1}} \sum_{m=0}^\infty \int_\RR \|P_m\circ\sigma_\mu(u)\|_{\HS(\calF_\mu)}^2 |\mu|^n \,d\mu. $$
For $s\in(-n-1,n+1)$ we define a new norm $\|\blank\|_{\dot{H}^s(H^{2n+1})}$ on $C_c^\infty(H^{2n+1})$ by
\begin{equation}
 \|u\|_{\dot{H}^s(H^{2n+1})}^2 = \frac{2^{n-1}}{\pi^{n+1}} \sum_{m=0}^\infty \frac{(\frac{n+1+s}{2})_m}{(\frac{n+1-s}{2})_m} \int_\RR \|P_m\circ\sigma_\mu(u)\|_{\HS(\calF_\mu)}^2 |\mu|^{n+s} \,d\mu.\label{eq:SobolevNormCplx}
\end{equation}
The completion of $C_c^\infty(H^{2n+1})$ with respect to the norm $\|\blank\|_{\dot{H}^s(H^{2n+1})}$ will be denoted by $\dot{H}^s(H^{2n+1})$ and is called \textit{homogeneous Sobolev space of degree $s$}.

In his paper \cite[Theorem 8.1]{Cow82} Cowling showed that the norm $\|\blank\|_\mu$ defined by \eqref{eq:CSNormCplx} is equal to the norm $\|\blank\|_{\dot{H}^{-\mu}(H^{2n+1})}$:
$$ \|u\|_\mu = \|u\|_{\dot{H}^{-\mu}(H^{2n+1})}. $$
Hence the Hilbert space $I_\mu$ is equal to the homogeneous Sobolev space $\dot{H}^{-\mu}(H^{2n+1})$.

\section{Symmetry breaking operators vs. Poisson transforms}\label{sec:SymBreakingOpsVsPoissonTrafos}

In this section we explain how the recently constructed symmetry breaking operators between induced representations (see \cite{KS13,MOO13} for details) can be used to construct Poisson transforms for certain boundary value problems on the nilpotent groups $\overline{N}$.

\subsection{Symmetric pairs}

Let $G'=G^\sigma$ be the fixed point group of the involution
$$ \sigma(g) = \1_{n+1,1}\cdot g\cdot \1_{n+1,1} $$
given by conjugation with the matrix $\1_{n+1,1}=\diag(1,\ldots,1,-1)$. In the canonical block diagonal decomposition we can identify $G'\simeq U(1,n;\FF)\times U(1;\FF)$. By definition $(G,G')$ is a symmetric pair, and we note that $P'=P\cap G'$ is a parabolic subgroup of $G'$. Its Langlands decomposition is given by $P'=M'AN'$ with $M'=\Delta U(1;\FF)\times U(n-1;\FF)\times U(1;\FF)$ and $N'=N\cap H$. Identifying $\overline{N}\simeq\FF^n\oplus\Im\FF$ as before the subgroup $\overline{N}'=\theta N'$ is identified with the subspace $\FF^{n-1}\oplus\Im\FF\subseteq\FF^n\oplus\Im\FF$ where $\FF^{n-1}\subseteq\FF^n$ as the first $n-1$ coordinates.

For $\nu\in\CC$ consider the principal series representations
$$ \Ind_{P'}^{G'}(\1\otimes e^\nu\otimes\1) $$
of $G'$. Again we realize these representations on smooth functions on
$\overline{N}'\simeq\FF^{n-1}\oplus\Im\FF$ and denote this realization by
$(\tau_\nu^\infty,J_\nu^\infty)$ with $\calS(\overline{N}')\subseteq
J_\nu^\infty\subseteq C^\infty(\overline{N}')$. Denote by $(\tau_\nu,J_\nu)$
the unitary globalization of $(\tau_\nu^\infty,J_\nu^\infty)$ whenever $\tau_\nu^\infty$ is unitarizable, and by $(\tau_\nu^{-\infty},J_\nu^{-\infty})$ the distribution globalization.

\subsection{Symmetry breaking operators}\label{sec:SymBreakingOps}

In \cite{KS13} and \cite{MOO13} a meromorphic family of $G'$-intertwining operators $A_{\mu,\nu}:I_\mu^\infty\to J_\nu^\infty$ is constructed as singular integral operators
$$ A_{\mu,\nu}u(n') = \int_{\overline{N}} K_{\mu,\nu}(n,n')u(n)\,dn, \qquad n'\in\overline{N}'. $$
The kernel $K_{\mu,\nu}(n,n')$ on $\overline{N}\times\overline{N}'$ is given by
$$ K_{\mu,\nu}((z,t),(z',t')) = \frac{|z_n|^{(\mu-\rho)+(\nu+\rho')}}{|(z,t)^{-1}\cdot(z',0,t')|^{2(\nu+\rho')}}, \qquad (z,t)\in\overline{N},(z',t)\in\overline{N}', $$
where $z=(z_1,\ldots,z_n)\in\FF^n$ and $\rho'=\frac{1}{2}\tr\ad|_{\frakn'}$. The intertwining property of these operators can be written as
$$ A_{\mu,\nu}\circ\pi_\mu^\infty(g) = \tau_\nu^\infty(g)\circ A_{\mu,\nu} \qquad \forall\,g\in G'. $$

The transpose operator $A_{\mu,\nu}^T:J_\nu^*\to I_\mu^*$ can in view of \eqref{eq:DistributionVectorsEmbeddings} be interpreted as an intertwining operator between the distribution globalizations $\tau_{-\nu}^{-\infty}$ and $\pi_{-\mu}^{-\infty}$. We put
$$ B_{\mu,\nu} := A_{-\mu,-\nu}^T: J_\nu^{-\infty}\to I_\mu^{-\infty} $$
which is given by
$$ B_{\mu,\nu}f(n) = \int_{\overline{N}} K_{-\mu,-\nu}(n,n')f(n')\,dn', \qquad n\in\overline{N}, $$
and satisfies the intertwining property
$$ B_{\mu,\nu}\circ\tau_\nu^{-\infty}(g) = \pi_\mu^{-\infty}(g)\circ B_{\mu,\nu} \qquad \forall\,g\in G'. $$

\subsection{The Casimir operator}\label{sec:CasimirOperator}

Define
$$ B(X,Y) := \frac{1}{2}\Re\tr(XY), \qquad X,Y\in\frakg. $$
Then $B$ is a non-degenerate invariant bilinear form on $\frakg$, the Lie algebra of $G$. Write $G'=G_1'\times G_2'$ with $G_1'=U(1,n;\FF)$ and $G_2'=U(1;\FF)$. Consider the Casimir elements $C_1,C_2\in\calU(\frakg)^{G'}$ of $G_1'$ and $G_2'$ within the universal enveloping algebra of $\frakg$ with respect to this form. (Note that $G_2'=O(1)$ for $\FF=\RR$ and hence $\frakg_2'=0$ which implies $C_2=0$.) We study the action of the element
$$ C:=C_1-C_2\in\calU(\frakg)^{G'} $$
in the representation $\pi_\mu^{-\infty}$. For this denote by $d\pi_\mu^{-\infty}$ the differentiated representation of $\calU(\frakg)$ on $I_\mu^{-\infty}$.

To state the result we identify $X\in\overline{\frakn}$ with the left-invariant differential operator on $\overline{N}$ given by
$$ (Xf)(\overline{n}) = \left.\frac{d}{ds}\right|_{s=0}f(\overline{n}e^{sX}), \qquad \overline{n}\in\overline{N}. $$
Let $X_\alpha$ be an orthonormal basis of $\FF^n\subseteq\overline{N}$ with respect to the standard inner product on $\FF^n$ and put
$$ Lf(z,t) := \sum_\alpha X_\alpha^2f(z,t), \qquad E_nf(z,t) := \left.\frac{d}{ds}\right|_{s=0}f(z',z_n+sz_n,t). $$
Then $E_n$ is the Euler operator in the coordinate $z_n$ and $L$ is for $\FF=\RR$ the usual Laplacian $\Delta$ and for $\FF=\CC$ the CR-Laplacian $\calL$.

\begin{proposition}\label{prop:ActionCasimir}
For $\mu\in\CC$ we have
$$ d\pi_\mu^{-\infty}(C) = |z_n|^2L+2(\mu+1)E_n+(\mu+\rho)(\mu+\rho-2\rho'). $$
\end{proposition}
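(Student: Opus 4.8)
The plan is to compute $d\pi_\mu^{-\infty}(C)$ by explicitly writing out the Casimir elements $C_1, C_2$ in terms of a basis of $\frakg$ adapted to the restricted root decomposition, and then letting each basis element act as a differential operator on $\overline{N}$ in the noncompact picture. Since $C = C_1 - C_2 \in \calU(\frakg)^{G'}$, the operator $d\pi_\mu^{-\infty}(C)$ is $G'$-invariant and must be expressible as a differential operator on $\overline{N} \simeq \FF^n \oplus \Im\FF$ that is invariant under the $G'$-action; the stated formula $|z_n|^2 L + 2(\mu+1)E_n + (\mu+\rho)(\mu+\rho-2\rho')$ is exactly of this expected shape. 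First I would fix an orthonormal basis of $\frakg$ with respect to $B(X,Y)=\tfrac{1}{2}\Re\tr(XY)$ that respects the grading $\frakg = \frakg_{-2\alpha}\oplus\frakg_{-\alpha}\oplus\frakg_0\oplus\frakg_\alpha\oplus\frakg_{2\alpha}$, so that $C_1$ and $C_2$ decompose into pieces indexed by root spaces.

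The key computational step is the following. On the open dense cell $\overline{N}MAN$, a function $f \in I_\mu^\infty$ is determined by its restriction to $\overline{N}$, and the differentiated action $d\pi_\mu^{-\infty}(X)$ for $X \in \frakg$ is computed by the usual recipe: decompose $\exp(sX)\cdot \overline{n}$ according to the Bruhat decomposition $\overline{N}MAN$, writing it as $\overline{n}(s)\cdot m(s)a(s)n(s)$, and differentiate at $s=0$, using the homogeneity rule $f(gman)=a^{-\mu-\rho}f(g)$. For $X \in \overline{\frakn}$ this simply gives the left-invariant vector field on $\overline{N}$, so the top-order piece $\frakg_{-\alpha}\cdot\frakg_{-\alpha}$ (together with $\frakg_{-2\alpha}$ for $\FF=\CC$) produces the second-order operator $|z_n|^2 L$, because the elements of $\frakg'$ that are missing (those involving the last coordinate direction) are precisely what promotes $L$ on all of $\FF^n$ to the weighted operator; here the factor $|z_n|^2$ arises from the $G'$-invariance structure. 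For $X \in \frakg_0 = \frakm\oplus\fraka$ one gets the scalar $-(\mu+\rho)$ from the $A$-action together with the $M$-weight, and the cross terms between $\frakn$ and $\overline{\frakn}$ yield the first-order Euler operator $E_n$ with coefficient $2(\mu+1)$.

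The main obstacle will be organizing the bookkeeping so that the contributions of $C_1$ and $C_2$ combine correctly: because $C = C_1 - C_2$ subtracts the Casimir of the central $U(1;\FF)$ factor, one must track carefully which pieces of the $\frakg_\alpha$-$\frakg_{-\alpha}$ pairing and which $\frakm$-components belong to $G_1'$ versus $G_2'$, and verify that the subtraction removes exactly the terms that would otherwise spoil the clean coefficients. The constant term is the most delicate: it is the value $(\mu+\rho)(\mu+\rho-2\rho')$, and I expect it to emerge from the combination of the $\fraka$-contribution (giving $(\mu+\rho)^2$ via the square of the infinitesimal $A$-character shifted by $\rho$) and the zeroth-order part coming from commuting vector fields past the homogeneity factor, with the $-2\rho'$ reflecting the difference between $\rho$ for $G$ and $\rho'$ for $G'$. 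I would handle the real case $\FF=\RR$ first, where $\frakg_{\pm 2\alpha}=0$ and $C_2=0$, to pin down the structure, and then treat $\FF=\CC$ by incorporating the center $\frakg_{\pm 2\alpha} = \Im\FF$ and the subtraction of $C_2$. Verifying that the off-diagonal and higher-order error terms cancel to leave precisely the three stated terms — rather than grinding through each matrix entry — is where the real care is required.
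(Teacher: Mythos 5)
Your overall strategy is the paper's: fix an explicit basis of $\frakg$ adapted to the grading, compute $d\pi_\mu$ of each basis element as a differential operator on $\overline{N}$, express $C=C_1-C_2$ in that basis, and combine. (Your route to the action of $\frakn$ via Bruhat factorization of $\exp(sX)\overline{n}$ is equivalent to the paper's device of conjugating $d\pi_\mu(\overline{X}_j)$ by the inversion $\pi_\mu(w_0)$ using $\Ad(w_0)\overline{X}_j=-X_j$; either works.) However, two of your structural claims about where the terms come from are wrong, and since the entire content of this proposition is the bookkeeping, they would derail the execution. First, the Casimir of $G_1'$ with respect to $B$ is built from dual bases, and $B(\frakg_\lambda,\frakg_\kappa)=0$ unless $\lambda+\kappa=0$; hence $C_1$ contains no $\frakg_{-\alpha}\cdot\frakg_{-\alpha}$ term. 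The second-order part of $d\pi_\mu(C)$ does \emph{not} arise from squares of elements of $\overline{\frakn}'$ (those would give only the unweighted tangential Laplacian $\sum_{j\le n-1}\partial_{x_j}^2$); it arises from the mixed products $X_j\overline{X}_j$ with $X_j\in\frakn'$, $\overline{X}_j\in\overline{\frakn}'$, where the quadratic coefficient enters because $d\pi_\mu(X_j)=-|x|^2\partial_{x_j}+2x_j(E+\mu+\rho)$ already carries the weight $|x|^2$. Second, $\frakg_0$ does not act by the scalar $-(\mu+\rho)$: the element $H$ acts by the first-order operator $E+\mu+\rho$, and the term $H^2-(n-1)H$ (resp.\ $H^2-2nH$) is indispensable --- after cancellation against the Euler terms produced by $\sum_j X_j\overline{X}_j$ it is precisely what supplies the normal piece $x_n^2\partial_{x_n}^2$ of $x_n^2\Delta$ and the coefficient $2(\mu+1)$ of $E_n$, neither of which can come from tangential vector fields alone.

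Concretely, in the real case one finds
$\sum_{j\le n-1}d\pi_\mu(X_j)d\pi_\mu(\overline{X}_j)=|x|^2\Delta'-2E'E+2E'-2(\mu+\rho)E'$ with $E'=\sum_{j\le n-1}x_j\partial_{x_j}$ and $\Delta'$ the tangential Laplacian; writing $|x|^2\Delta'=x_n^2\Delta'+E'(E'+n-3)+\sum_{j<k\le n-1}M_{jk}^2$ (separation into radial and angular parts), the $\sum M_{jk}^2$ cancels against $-\sum M_{jk}^2$ in $C_1$, the identity $E^2-2E'E+E'^2=(x_n\partial_{x_n})^2$ turns the $\fraka$-contribution into $x_n^2\partial_{x_n}^2+x_n\partial_{x_n}$, and the remaining first-order terms in $E'$ cancel identically, leaving exactly $x_n^2\Delta+2(\mu+1)x_n\partial_{x_n}+(\mu+\rho)(\mu+\rho-2\rho')$. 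This cancellation pattern --- not a general ``$G'$-invariance structure'' argument --- is the proof, and your plan defers it while misassigning the sources of the leading terms; in the complex case you additionally need the explicit (and lengthier) formulas for $d\pi_\mu(X_j),d\pi_\mu(Y_j),d\pi_\mu(T)$ and the subtraction of $C_2=-2M_n^2$ to remove the extra angular term in the $z_n$-plane.
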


The proof will be given separately for the two cases $\FF=\RR,\CC$ in Sections~\ref{sec:CasimirReal} and \ref{sec:CasimirCplx}.

\begin{corollary}\label{cor:SelfAdjoint}
Assume $\mu\in(-\rho,\rho)$ then the operator
$$ |z_n|^2L+2(\mu+1)E_n $$
is essentially self-adjoint in the Hilbert space $I_\mu$.
\end{corollary}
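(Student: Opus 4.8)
The plan is to identify the operator, up to a real additive constant, with the image of the central element $C\in\calU(\frakg')$ in the unitary representation $\pi_\mu$, and then to deduce essential self-adjointness from the theory of analytic vectors. By Proposition~\ref{prop:ActionCasimir} we have, as operators on the smooth vectors $I_\mu^\infty$,
$$ |z_n|^2L+2(\mu+1)E_n = d\pi_\mu^{-\infty}(C)-(\mu+\rho)(\mu+\rho-2\rho'); $$
write $d\pi_\mu(C)$ for the restriction of $d\pi_\mu^{-\infty}(C)$ to $I_\mu^\infty$. For $\mu\in(-\rho,\rho)\subseteq\RR$ the scalar $(\mu+\rho)(\mu+\rho-2\rho')$ is real, so the two sides differ by a real multiple of the identity; since adding a real scalar preserves essential self-adjointness, it suffices to prove that $d\pi_\mu(C)$ is essentially self-adjoint on $I_\mu^\infty$.

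Because $\mu\in(-\rho,\rho)$, the representation $(\pi_\mu,I_\mu)$ is unitary (Section~\ref{sec:CSReps}), and in particular its restriction to $G'$ is unitary, so each $d\pi_\mu(X)$ with $X\in\frakg'$ is skew-symmetric on $I_\mu^\infty$. Writing $C=C_1-C_2$ with $C_i$ the Casimir element of the factor $G_i'$, each $C_i$ has the form $\sum_\alpha\epsilon_\alpha X_\alpha^2$ for a $B$-orthonormal basis $\{X_\alpha\}$ of $\frakg_i'$ with signs $\epsilon_\alpha=\pm1$; hence each $d\pi_\mu(X_\alpha)^2$ is symmetric and so is $d\pi_\mu(C)$. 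Moreover $\frakg_1'$ and $\frakg_2'$ commute and each $C_i$ is central in $\calU(\frakg_i')$, so $C$ lies in the center of $\calU(\frakg')$; this centrality is the structural feature that will force the deficiency indices to vanish.

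The decisive step is essential self-adjointness itself, which I would obtain from Nelson's analytic vector theorem. The space $I_\mu^\omega$ of analytic vectors of $\pi_\mu$ is dense in $I_\mu$, it is contained in $I_\mu^\infty$, and by the general theory of analytic vectors every $v\in I_\mu^\omega$ is an analytic vector for the operator $d\pi_\mu(D)$ for each $D\in\calU(\frakg)$, in particular for $d\pi_\mu(C)$. A symmetric operator possessing a dense set of analytic vectors is essentially self-adjoint, so the closure of $d\pi_\mu(C)|_{I_\mu^\omega}$ is self-adjoint. As $d\pi_\mu(C)$ is symmetric on the larger domain $I_\mu^\infty$ and $I_\mu^\omega$ is a core, essential self-adjointness propagates from $I_\mu^\omega$ to $I_\mu^\infty$ (and to $\calS(\overline{N})\subseteq I_\mu^\infty$, cf.~\eqref{eq:DistributionVectorsEmbeddings}). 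Combined with the reduction of the first paragraph, this proves the corollary.

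The main obstacle is exactly this assertion of essential---rather than merely formal---self-adjointness: $C$ is the Casimir of the noncompact reductive group $G'$, whose invariant form $B$ is indefinite on $\frakg'$, so $C$ is not elliptic and the standard Friedrichs/elliptic-regularity route to vanishing deficiency indices is unavailable. The representation-theoretic input---density of analytic vectors and their invariance under all of $d\pi_\mu(\calU(\frakg))$---is precisely what turns the global unitarity of $\pi_\mu$ into the required core statement. As an alternative one could decompose $\pi_\mu|_{G'}$ into a direct integral of irreducibles, on each of which the central element $C$ acts by a real scalar by Schur's Lemma, and then identify the closure of $d\pi_\mu(C)$ with the resulting diagonal multiplication operator; the analytic-vector argument has the advantage of avoiding this measure-theoretic bookkeeping.
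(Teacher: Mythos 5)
Your reduction to the essential self-adjointness of $d\pi_\mu(C)$ via Proposition~\ref{prop:ActionCasimir}, and the verification that this operator is symmetric on a $\pi_\mu(G')$-invariant dense domain, are both fine and match how the paper sets things up. The gap is in the decisive third step. It is not true that an analytic vector for the representation is an analytic vector for $d\pi_\mu(D)$ for every $D\in\calU(\frakg)$: analyticity of $v$ for $\pi_\mu$ only yields bounds $\|d\pi_\mu(X^{\alpha})v\|\leq R^{|\alpha|}\,|\alpha|!$, so for the second-order element $C$ the best available estimate is $\|d\pi_\mu(C)^k v\|\leq R'^{\,2k}(2k)!$, and $\sum_k t^k (2k)!/k!$ diverges for every $t>0$. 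The failure is already visible for a compact group: in the isotypic decomposition the Casimir acts by scalars growing quadratically in the highest weight, while an analytic vector only has exponentially decaying components, so $\sum_k t^k\|d\pi(C)^k v\|/k!$ diverges for typical analytic vectors. Hence Nelson's analytic vector theorem cannot be applied to $d\pi_\mu(C)$ on the domain $I_\mu^\omega$ as you propose, and your argument does not establish that the deficiency indices vanish. (Analytic vectors of $\pi_\mu$ \emph{are} analytic for $d\pi_\mu(X)$, $X\in\frakg'$ — first order — which is perhaps the source of the confusion.)

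The standard repair, which is exactly what the result the paper cites (\cite[Theorem 4.4.4.3]{War72}, going back to Nelson--Stinespring) encapsulates, uses the centrality of $C$ in $\calU(\frakg')$ in a more serious way than mere symmetry: one first shows that the Nelson Laplacian $\Delta'=\sum_\alpha X_\alpha^2$ (all signs positive, an elliptic element of $\calU(\frakg')$) is essentially self-adjoint, by exhibiting a dense set of vectors — heat-kernel regularizations $\pi_\mu(\varphi_t)v$ — that genuinely are analytic vectors for $d\pi_\mu(\Delta')$; then the symmetric operator $d\pi_\mu(C)$ commutes with $d\pi_\mu(\Delta')$, hence with the spectral projections of its closure, and this reduces the problem to bounded pieces and forces the deficiency indices of $d\pi_\mu(C)$ to vanish. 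Your alternative suggestion (disintegrating $\pi_\mu|_{G'}$ and applying Schur's Lemma fiberwise) can be made rigorous but requires a measurable disintegration of the smooth vectors and an identification of a core, so it is not actually lighter. In short: either carry out the Nelson--Stinespring commutation argument or simply cite it, as the paper does in its one-line proof; the analytic-vector shortcut as written is incorrect.
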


\begin{proof}
In a unitary representation the Casimir element defines a self-adjoint operator by \cite[Theorem 4.4.4.3]{War72} and hence the statement is clear by Proposition~\ref{prop:ActionCasimir}.
\end{proof}

\begin{corollary}\label{cor:SolutionsOnDistributions}
The image of the operator $B_{\mu,\nu}:J_\nu^{-\infty}\to I_\mu^{-\infty}$ consists of solutions of the following differential equation:
$$ (|z_n|^2L+2(\mu+1)E_n)u = \lambda u, $$
where $\lambda=-((\mu+\rho)-(\nu+\rho'))((\mu+\rho)+(\nu-\rho'))$.
\end{corollary}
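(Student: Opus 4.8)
The plan is to reduce the statement to three ingredients that are already available: the $G'$-intertwining property of $B_{\mu,\nu}$, Schur's lemma applied to the central element $C\in\calU(\frakg')$, and the explicit formula of Proposition~\ref{prop:ActionCasimir}. First I would fix $f\in J_\nu^{-\infty}$, set $u:=B_{\mu,\nu}f\in I_\mu^{-\infty}$, and differentiate the intertwining relation $B_{\mu,\nu}\circ\tau_\nu^{-\infty}(g)=\pi_\mu^{-\infty}(g)\circ B_{\mu,\nu}$, extending it from $\frakg'$ to all of $\calU(\frakg')$. Since $C\in\calU(\frakg')$, this yields $d\pi_\mu^{-\infty}(C)\,u=B_{\mu,\nu}\bigl(d\tau_\nu^{-\infty}(C)f\bigr)$.

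Next I would observe that $C=C_1-C_2$ is central in $\calU(\frakg')$, being built from the Casimir elements of the two commuting factors of $G'=G_1'\times G_2'$. Hence it acts on the irreducible representation $\tau_\nu^{-\infty}$ by a scalar $c(\nu)$ (equivalently, by the infinitesimal character of the induced representation, so the scalar action holds on the whole Harish--Chandra module and all its globalizations). Computing $c(\nu)$ is the one genuine calculation: the factor $G_2'=U(1;\FF)$ acts trivially in $\tau_\nu$, so $C_2$ contributes $0$, while the Casimir $C_1$ of $G_1'=U(1,n;\FF)$ acts on the spherical principal series $\Ind_{P'}^{G'}(\1\otimes e^\nu\otimes\1)$ by $\langle\nu,\nu\rangle-\langle\rho',\rho'\rangle=\nu^2-\rho'^2$ with respect to the form $B$ (here $\rho'$ is exactly the half-sum of roots for $G_1'$). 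Thus $d\tau_\nu^{-\infty}(C)f=(\nu^2-\rho'^2)f$, and combining with the previous step, $d\pi_\mu^{-\infty}(C)\,u=(\nu^2-\rho'^2)\,u$.

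Finally I would insert the formula from Proposition~\ref{prop:ActionCasimir}, namely $d\pi_\mu^{-\infty}(C)=|z_n|^2L+2(\mu+1)E_n+(\mu+\rho)(\mu+\rho-2\rho')$, to isolate the differential part and read off $(|z_n|^2L+2(\mu+1)E_n)\,u=\lambda u$ with $\lambda=(\nu^2-\rho'^2)-(\mu+\rho)(\mu+\rho-2\rho')$. A one-line factorization then gives $\lambda=\nu^2-(\mu+\rho-\rho')^2=-\bigl((\mu+\rho)-(\nu+\rho')\bigr)\bigl((\mu+\rho)+(\nu-\rho')\bigr)$, which is the claimed eigenvalue. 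Since $f\in J_\nu^{-\infty}$ was arbitrary, every element of the image of $B_{\mu,\nu}$ solves the asserted equation.

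The only real obstacle is pinning down the scalar $c(\nu)$, i.e.\ checking that the Casimir eigenvalue of $G'$ on $\tau_\nu$ comes out exactly as $\nu^2-\rho'^2$ in the normalization fixed by the form $B$. I would verify this either by evaluating $d\tau_\nu^{-\infty}(C)$ on the constant (spherical) vector in the noncompact model, or—most safely, to keep normalizations consistent with the rest of the paper—by rerunning the computation underlying Proposition~\ref{prop:ActionCasimir} for the group $G'$ acting in $\tau_\nu$, where all the first-order and $|z_n|^2L$-type terms are forced to vanish precisely because $C$ is now the full Casimir of the acting group and hence central. Everything else is formal.
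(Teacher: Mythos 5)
Your argument is correct and is essentially the paper's own proof: both use the intertwining property of $B_{\mu,\nu}$, the fact that $C$ acts on $\tau_\nu^{-\infty}$ by the scalar $\nu^2-\rho'^2$ (with $C_2$ acting trivially), and Proposition~\ref{prop:ActionCasimir} to isolate the differential part, followed by the same factorization of $\lambda$. The extra care you take in justifying the scalar $\nu^2-\rho'^2$ via the infinitesimal character is exactly what the paper invokes in one line.
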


\begin{proof}
Since the representation $J_\nu^{-\infty}$ has infinitesimal character $\nu+\rho'$ the Casimir element $C_1$ acts by $\nu^2-\rho'^2$. Further $C_2$ acts trivially and hence $d\tau_\nu^{-\infty}(C)$ is the scalar $\nu^2-\rho'^2$. Since $B_{\mu,\nu}$ is intertwining we obtain
$$ d\pi_\mu^{-\infty}(C)\circ B_{\mu,\nu} = B_{\mu,\nu}\circ d\tau_\nu^{-\infty}(C) = (\nu^2-\rho'^2)B_{\mu,\nu} $$
and the claim follows from Proposition~\ref{prop:ActionCasimir}.
\end{proof}

\subsection{Trace maps}\label{sec:TraceMaps}

We consider the trace map $T$ which restricts functions on $\overline{N}\simeq\FF^n\oplus\Im\FF$ to the subgroup $\overline{N}'\simeq\FF^{n-1}\oplus\Im\FF$:
$$ Tf(z',t) := f(z',0,t), \qquad (z',t)\in\overline{N}'. $$

\begin{lemma}\label{lem:ResIntertwining}
For $\mu+\rho=\nu+\rho'$ the operator $T$ maps $I_\mu^\infty$ to $J_\nu^\infty$ and intertwines the representations $\pi_\mu^\infty|_{G'}$ and $\tau_\nu^\infty$, i.e.
$$ T\circ\pi_\mu^\infty(g) = \tau_\nu^\infty(g)\circ T \qquad \forall\,g\in G'. $$
\end{lemma}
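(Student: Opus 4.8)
The plan is to work in the noncompact (Bruhat) model of both representations and to reduce the assertion to a geometric compatibility between the Bruhat decompositions of $G$ and $G'$. Recall that $I_\mu^\infty$ and $J_\nu^\infty$ arise by restriction of induced functions to the open dense cells $\overline{N}P$ and $\overline{N}'P'$. Writing, for $x$ in the open cell, $x=\overline{n}(x)\,m(x)a(x)n(x)$ for its $\overline{N}MAN$-decomposition, the left-regular action becomes
\[
 (\pi_\mu^\infty(g)F)(\overline{n}) = a(g^{-1}\overline{n})^{-\mu-\rho}\,F\big(\overline{n}(g^{-1}\overline{n})\big), \qquad g\in G,
\]
and likewise $(\tau_\nu^\infty(g)F')(\overline{n}') = a'(g^{-1}\overline{n}')^{-\nu-\rho'}F'\big(\overline{n}'(g^{-1}\overline{n}')\big)$ for $g\in G'$, where primes denote the $\overline{N}'M'AN'$-decomposition in $G'$. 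Thus everything comes down to comparing the two cocycles along $\overline{N}'\subseteq\overline{N}$.

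The geometric heart is that the involution $\sigma$ respects the entire parabolic structure. Since $\sigma=\Ad(\1_{n+1,1})$ fixes $H$, it commutes with $\ad H$ and hence preserves each root space $\frakg_\lambda$; together with $\sigma\theta=\theta\sigma$ this gives $A\subseteq G'$ as well as $\overline{N}^\sigma=\overline{N}'$, $M^\sigma=M'$, $N^\sigma=N'$. Consequently, for $g\in G'$ and $\overline{n}'\in\overline{N}'$ the element $g^{-1}\overline{n}'$ lies in $G'=G^\sigma$, so applying $\sigma$ to its $G$-Bruhat decomposition and using uniqueness shows that all its $G$-factors are $\sigma$-fixed, hence lie in $\overline{N}'$, $M'$, $A$, $N'$ respectively. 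By uniqueness of the $G'$-Bruhat decomposition these factors coincide with the $G'$-decomposition, so that $a(g^{-1}\overline{n}')=a'(g^{-1}\overline{n}')$ and $\overline{n}(g^{-1}\overline{n}')=\overline{n}'(g^{-1}\overline{n}')\in\overline{N}'$.

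Plugging this into the two formulas, and using that $F$ and $TF$ agree at points of $\overline{N}'$, the identity $T\circ\pi_\mu^\infty(g)=\tau_\nu^\infty(g)\circ T$ holds precisely when $a^{-\mu-\rho}=a^{-\nu-\rho'}$ for the (non-constant) values $a=a(g^{-1}\overline{n}')\in A$, i.e. exactly under the hypothesis $\mu+\rho=\nu+\rho'$. It remains to check that $T$ actually lands in $J_\nu^\infty$: passing to the compact picture $G/P\cong K/M$, $G'/P'\cong K'/M'$, the map $T$ is restriction of smooth sections along the closed embedding $G'/P'\hookrightarrow G/P$, which is continuous into $C^\infty$, and the defining line bundle of $I_\mu^\infty$ restricts to that of $J_\nu^\infty$ under the same numerical condition $\mu+\rho=\nu+\rho'$.

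I expect the main obstacle to be the bookkeeping that turns the abstract compatibility of the two decompositions into the precise exponent condition $\mu+\rho=\nu+\rho'$: one must track carefully how $\rho=\frac12\tr\ad|_{\frakn}$ and $\rho'=\frac12\tr\ad|_{\frakn'}$ differ (since $\frakn$ and $\frakn'$ have different dimensions) and verify that this difference is exactly what is needed both for the cocycle comparison and for the matching of the two line bundles in the membership step.
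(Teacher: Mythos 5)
Your proof is correct, but it takes a more computational route than the paper's. The paper simply observes that the restriction operator $\widetilde{T}:C^\infty(G)\to C^\infty(G')$ carries $\Ind_P^G(\1\otimes e^\mu\otimes\1)$ into $\Ind_{P'}^{G'}(\1\otimes e^\nu\otimes\1)$ --- because $M'AN'\subseteq MAN$ and the covariance exponents match exactly when $\mu+\rho=\nu+\rho'$ --- and that $\widetilde{T}$ manifestly commutes with the left $G'$-action; the lemma then follows from the commutative square relating $\widetilde{T}$ to $T$ via the bijections given by restriction to $\overline{N}$ and $\overline{N}'$. You instead unpack the noncompact model and verify the identity pointwise by comparing the two Bruhat cocycles, using that $\sigma$ fixes $H$ and commutes with $\theta$, hence preserves each factor of $\overline{N}MAN$, so that uniqueness of the decomposition forces $a(g^{-1}\overline{n}')=a'(g^{-1}\overline{n}')$ and $\overline{n}(g^{-1}\overline{n}')\in\overline{N}'$ for $g\in G'$. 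That is a valid, self-contained argument (and your compact-picture justification that $T$ lands in $J_\nu^\infty$ is in substance the paper's proof); its only cost is the extra bookkeeping you anticipate, plus the standard caveat that the cocycle formula is literally valid only on the dense open set where $g^{-1}\overline{n}'$ lies in the big cell, which suffices by continuity and density. The paper's functorial argument buys brevity and sidesteps the open-cell issue entirely, while yours makes explicit the geometric compatibility of the two Bruhat decompositions, which is also what underlies the explicit form of the symmetry breaking kernels used later.
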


\begin{proof}
Consider the restriction operator $\widetilde{T}:C^\infty(G)\to C^\infty(G')$. It is clear from the definition of the induced representations that $\widetilde{T}$ maps $\Ind_P^G(\1\otimes e^\mu\otimes\1)$ to $\Ind_{P'}^{G'}(\1\otimes e^\nu\otimes\1)$
and intertwines the corresponding actions of $G'$. Hence, we have the following commutative diagram where the vertical arrows are restriction to $\overline{N}$ and $\overline{N}'$:
$$
\begin{xy}
\xymatrix{
 \Ind_P^G(\1\otimes e^\mu\otimes\1) \ar[d]^{\sim} \ar[r]^{\widetilde{T}} & \Ind_{P'}^{G'}(\1\otimes e^\nu\otimes\1) \ar[d]^{\sim}\\
 I_\mu^\infty \ar[r]^{T} & J_\nu^\infty.
}
\end{xy}
$$
Since the vertical arrows are bijections this shows the claim.
\end{proof}

The following result is standard for $\FF=\RR$ and proved in \cite[Theorem 4.6]{MOZ14} for $\FF=\CC$.

\begin{proposition}
The trace map $T$ extends to a continuous linear operator
$$ T:\dot{H}^s(\overline{N})\to\dot{H}^{s-\frac{d}{2}}(\overline{N}') $$
for every $s>\frac{d}{2}$, where $d=\dim_\RR\FF$.
\end{proposition}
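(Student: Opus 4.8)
The plan is to treat the two cases $\FF=\RR$ and $\FF=\CC$ separately, using the Fourier-analytic descriptions of the homogeneous Sobolev norms set up in Sections~\ref{sec:SobolevSpacesReal} and \ref{sec:SobolevSpacesCplx}. Since $T$ is already defined on the dense subspaces $\calS(\overline{N})$ resp. $C_c^\infty(\overline{N})$, it suffices to establish the norm bound $\|Tu\|_{\dot{H}^{s-\frac d2}(\overline{N}')}\leq C\|u\|_{\dot{H}^s(\overline{N})}$ on such functions and then extend by continuity.

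For $\FF=\RR$ (so $d=1$, $\overline{N}=\RR^n$, $\overline{N}'=\RR^{n-1}$) I would argue entirely on the Fourier side. Writing $\xi=(\xi',\xi_n)$, the Euclidean Fourier transform of the trace $Tu(x')=u(x',0)$ is, up to a normalizing constant, $\widehat{Tu}(\xi')=\int_\RR\widehat{u}(\xi',\xi_n)\,d\xi_n$. Applying Cauchy--Schwarz with the weight $(|\xi'|^2+\xi_n^2)^s$ gives
$$ |\widehat{Tu}(\xi')|^2 \leq \Big(\int_\RR |\widehat{u}(\xi',\xi_n)|^2(|\xi'|^2+\xi_n^2)^s\,d\xi_n\Big)\Big(\int_\RR (|\xi'|^2+\xi_n^2)^{-s}\,d\xi_n\Big). $$
The substitution $\xi_n=|\xi'|\eta$ shows the second factor equals $c_s|\xi'|^{1-2s}$ with $c_s=\int_\RR(1+\eta^2)^{-s}\,d\eta$, which is finite exactly when $s>\tfrac12=\tfrac d2$. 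Multiplying through by $|\xi'|^{2s-1}$ and integrating over $\xi'\in\RR^{n-1}$ then yields $\|Tu\|_{\dot{H}^{s-1/2}(\RR^{n-1})}^2\leq c_s\|u\|_{\dot{H}^s(\RR^n)}^2$, which is the desired estimate.

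For $\FF=\CC$ (so $d=2$, $\overline{N}=H^{2n+1}$, $\overline{N}'=H^{2n-1}$) the same scheme applies, but with the group Fourier transform $\sigma_\mu$ in place of $\calF_{\RR^n}$: the restriction $z_n=0$ must be expressed through the representation-theoretic data entering the norm \eqref{eq:SobolevNormCplx}, namely the Hilbert--Schmidt norms $\|P_m\circ\sigma_\mu(u)\|_{\HS(\calF_\mu)}$, and the shift by $d/2=1$ together with the threshold $s>1$ again emerges from the convergence of the analogous weight integral. Since this case is already recorded as \cite[Theorem 4.6]{MOZ14}, I would simply invoke that result rather than reproduce its computation.

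The main obstacle is precisely the convergence of the weight integral $\int_\RR(|\xi'|^2+\xi_n^2)^{-s}\,d\xi_n$, which diverges for $s\leq\tfrac12$ and is what pins the sharp threshold $s>\tfrac d2$. In the Heisenberg setting this step is considerably more delicate, because the noncommutative Fourier transform does not diagonalize the restriction map in such an elementary way; verifying the analogous convergence and scaling behaviour is the nontrivial technical content of \cite{MOZ14}.
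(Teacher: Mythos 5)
Your argument is correct: the paper itself gives no proof, simply declaring the real case "standard" and citing \cite[Theorem 4.6]{MOZ14} for the complex case, and your Cauchy--Schwarz computation on the Fourier side is exactly the standard trace-theorem argument being alluded to (the weight integral $\int_\RR(|\xi'|^2+\xi_n^2)^{-s}\,d\xi_n=c_s|\xi'|^{1-2s}$ converging precisely for $s>\tfrac12$), while your treatment of $\FF=\CC$ coincides with the paper's citation. So the proposal matches the paper's approach, merely making the "standard" real case explicit.
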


\begin{corollary}
Assume $s\in(\frac{d}{2},\rho)$ then the adjoint $T^*:\dot{H}^{s-\frac{d}{2}}(\overline{N}')\to\dot{H}^s(\overline{N})$ is an isometry (up to scalar) and $TT^*$ is a scalar multiple of the identity on $\dot{H}^{s-\frac{d}{2}}(\overline{N}')$.
\end{corollary}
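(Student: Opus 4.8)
The plan is to reduce the whole statement to Schur's Lemma by recognizing $T$ as an intertwining operator between two irreducible unitary complementary series representations. First I would translate the hypothesis $s\in(\frac{d}{2},\rho)$ into representation-theoretic parameters. Using the identifications $I_\mu=\dot{H}^{-\mu}(\overline{N})$ and $J_\nu=\dot{H}^{-\nu}(\overline{N}')$ established above, we have $\dot{H}^s(\overline{N})=I_{-s}$ and $\dot{H}^{s-\frac{d}{2}}(\overline{N}')=J_{\frac{d}{2}-s}$, so I set $\mu=-s$ and $\nu=\frac{d}{2}-s$. One checks from $\rho=\frac{d}{2}+\rho'$ (valid in both cases: $\rho=\frac{n}{2}$, $\rho'=\frac{n-1}{2}$, $d=1$ for $\FF=\RR$; $\rho=n+1$, $\rho'=n$, $d=2$ for $\FF=\CC$) that the compatibility condition $\mu+\rho=\nu+\rho'$ of Lemma~\ref{lem:ResIntertwining} holds. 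Moreover $\mu=-s\in(-\rho,0)$ and $\nu=\frac{d}{2}-s\in(-\rho',0)$, so both $\pi_\mu$ and $\tau_\nu$ lie in the unitary complementary series range.

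By Lemma~\ref{lem:ResIntertwining} the trace map $T$ intertwines the smooth representations $\pi_\mu^\infty|_{G'}$ and $\tau_\nu^\infty$. Since $T$ extends to a bounded operator $I_\mu\to J_\nu$ and the smooth vectors are dense in both Hilbert spaces, I would push the intertwining relation $T\circ\pi_\mu(g)=\tau_\nu(g)\circ T$ to all of $I_\mu$ by continuity. Its Hilbert space adjoint $T^*:J_\nu\to I_\mu$ is then again $G'$-intertwining, because the representations are unitary: from $T\pi_\mu(g)=\tau_\nu(g)T$ together with $\pi_\mu(g)^*=\pi_\mu(g)^{-1}$ and $\tau_\nu(g)^*=\tau_\nu(g)^{-1}$ one obtains $T^*\tau_\nu(g)=\pi_\mu(g)T^*$. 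Consequently the composition $TT^*:J_\nu\to J_\nu$ commutes with the $G'$-action, i.e. it is a $G'$-intertwining endomorphism of $(\tau_\nu,J_\nu)$.

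Now $\tau_\nu$ is, up to a trivial tensor factor on the compact component $G_2'=U(1;\FF)$, a spherical complementary series representation of $G_1'=U(1,n;\FF)$, and for $\nu$ in the complementary range $(-\rho',0)$ it is irreducible. Hence Schur's Lemma yields $TT^*=c\cdot\id$ for some $c\in\CC$; since $TT^*$ is positive semidefinite, $c\geq0$. The scalar is strictly positive because $T$ is the restriction map, which is manifestly nonzero, so that $T^*\neq0$ and $c>0$. Finally, for any $f\in J_\nu$ one computes $\|T^*f\|_{\dot{H}^s(\overline{N})}^2=\langle TT^*f,f\rangle=c\,\|f\|_{\dot{H}^{s-\frac{d}{2}}(\overline{N}')}^2$, which exhibits $T^*$ as isometric up to the scalar $\sqrt{c}$. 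This would give both assertions at once.

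The main obstacle I anticipate is the irreducibility of $\tau_\nu$ in the complementary series range, together with the justification that the intertwining property passes from the dense space of smooth vectors to the full unitary representations; once these two points are secured, Schur's Lemma does all the remaining work and the isometry statement is a one-line consequence. I would expect the precise value of the scalar $c$ to be a separate and more delicate matter, to be pinned down later by explicit computation of the Poisson kernel rather than by the soft argument above.
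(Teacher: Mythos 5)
Your proposal is correct and follows essentially the same route as the paper: identify $T$ as a $G'$-intertwining operator from $\pi_{-s}|_{G'}$ to $\tau_{\frac{d}{2}-s}$ via Lemma~\ref{lem:ResIntertwining}, pass to the adjoint, and apply Schur's Lemma to the irreducible unitary representation $\tau_\nu$ to conclude $TT^*=c\cdot\id$ and hence the isometry identity $\|T^*f\|^2=\langle TT^*f,f\rangle=c\|f\|^2$. The extra details you supply (density of smooth vectors, positivity and nonvanishing of $c$) are left implicit in the paper but are consistent with its argument.
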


\begin{proof}
Let $\mu=-s$ and $\nu=-s+\frac{d}{2}$ then $\mu+\rho=\nu+\rho'$ and hence, by Lemma~\ref{lem:ResIntertwining}, the restriction operator $T:\dot{H}^s(\overline{N})\to\dot{H}^{s-\frac{d}{2}}(\overline{N}')$ is intertwining $\pi_\mu|_{G'}\to\tau_\nu$. This clearly implies that the adjoint is intertwining $\tau_\nu\to\pi_\mu|_{G'}$. Therefore $TT^*$ is an endomorphism of the irreducible unitary representation $(\tau_\nu,\dot{H}^{s-\frac{d}{2}}(\overline{N}'))$ and hence a scalar multiple of the identity by Schur's Lemma, say $TT^*=c\cdot\id$. Then
$$ \|T^*f\|_{\dot{H}^s(\overline{N})}^2 = \langle T^*f,T^*f\rangle_{\dot{H}^s(\overline{N})} = \langle TT^*f,f\rangle_{\dot{H}^{s-\frac{d}{2}}(\overline{N}')} = c\|f\|_{\dot{H}^{s-\frac{d}{2}}(\overline{N}')}^2 $$
which finishes the proof.
\end{proof}

In the next step we show that $T^*$ is a scalar multiple of an operator of the form $B_{\mu,\nu}$, the transpose of a symmetry breaking operator introduced in Section~\ref{sec:SymBreakingOps}. For this we make use of a strong representation theoretic result referred to as \textit{Multiplicity One Theorem}.

\begin{fact}[{see \cite{SZ12}}]\label{fct:MultOneThms}
Let $G=O(1,n+1)$ resp. $U(1,n+1)$ and $G'=O(1,n)$ resp. $U(1,n)$. Then for any irreducible Casselman--Wallach representations $\pi$ of $G$ and $\tau$ of $G'$ the space of $G'$-intertwining operators $\tau\to(\pi^*)|_{G'}$ is at most one-dimensional.
\end{fact}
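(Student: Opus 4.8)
The pairs in question are precisely the rank one members of the family of Gross--Prasad pairs $(U(p,q+1;\FF),U(p,q;\FF))$, and the assertion is the archimedean multiplicity one theorem for them. The plan is to follow the distribution-theoretic strategy of Aizenbud--Gourevitch--Rallis--Schiffmann and Sun--Zhu rather than to manipulate the representations $\pi,\tau$ directly. First I would invoke the Gelfand--Kazhdan criterion in its Casselman--Wallach form: to obtain $\dim\Hom_{G'}(\tau,(\pi^*)|_{G'})\le 1$ it suffices to produce an anti-involution $\varsigma$ of $G$ preserving $G'$ such that every $G'$-invariant distribution on $G$ (for the two-sided conjugation action of $G'$) is automatically fixed by $\varsigma$. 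The natural candidate is the transpose-type map $g\mapsto\theta(g)^{-1}$ compatible with the defining form (conjugate-transpose up to an inner twist), and the facts that it intertwines $\pi$ with $\pi^*$ and $\tau$ with $\tau^*$ are formal. This step converts the whole problem into a geometric statement about invariant distributions. I emphasise that verifying the applicability of the Gelfand--Kazhdan mechanism in the smooth Fr\'echet category is itself nontrivial and relies on Sun's automatic-continuity and matrix-coefficient estimates, which I would quote.

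Second, I would linearise and localise. By Harish-Chandra descent together with the Bernstein localisation principle, the support of a $\varsigma$-anti-invariant $G'$-invariant distribution on $G$ can be examined orbit by orbit: near a semisimple point one replaces $G$ by the relevant centraliser and passes to the tangent space, reducing to $G'$-invariant distributions on a linear slice and ultimately to distributions supported on the nilpotent cone of the associated $G'$-module. Because the real rank is one, there are only finitely many relevant orbits, so the reduction terminates, and one argues by induction, peeling off the open orbit and descending to its complement at each stage.

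Third comes the genuine vanishing step, which I expect to be the main obstacle: one must show that on each singular stratum there is no nonzero $G'$-invariant distribution that is anti-invariant under $\varsigma$. The decisive device is a comparison of such a distribution with its partial Fourier transform along the slice: the anti-involution acts on the Fourier side in a controlled manner while preserving the support constraint, and on each nilpotent orbit the homogeneity degree forced by $G'$-equivariance is incompatible with the one forced by the Fourier transform unless the distribution vanishes. Carrying this out rigorously over $\RR$ and $\CC$ requires the theory of holonomic (Harish-Chandra) $\calD$-modules to bound the transversal order of the distributions, and a careful analysis of the normal spaces to the orbits; this is exactly where the specific structure of $(U(1,n+1;\FF),U(1,n;\FF))$ enters and where essentially all of the difficulty concentrates. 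Reassembling the stratum-by-stratum vanishing up the descent then shows that every invariant distribution is $\varsigma$-fixed, and the Gelfand--Kazhdan criterion yields the claimed multiplicity bound.
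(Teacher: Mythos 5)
The paper does not prove this statement at all: it is imported as a black box, labelled a \emph{Fact}, with the citation to Sun--Zhu \cite{SZ12} standing in for the entire argument. So the relevant comparison is between your outline and the proof in that reference, and on that score your roadmap is faithful: the pairs $(O(1,n+1),O(1,n))$ and $(U(1,n+1),U(1,n))$ are indeed real forms of Gross--Prasad pairs, the reduction via a Gelfand--Kazhdan criterion (in the Casselman--Wallach category, resting on Sun's automatic continuity and the identification of $\Hom_{G'}(\tau,\pi^*|_{G'})$ with invariant functionals on a completed tensor product) to a statement about $\varsigma$-anti-invariant, $G'$-invariant distributions on $G$, followed by Harish-Chandra descent, Bernstein localisation, and a Fourier-transform/homogeneity argument on the nilpotent cone, is exactly the architecture of \cite{SZ12} and its predecessor by Aizenbud--Gourevitch--Rallis--Schiffmann.

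That said, as a \emph{proof} your proposal has a genuine gap, and you have located it yourself: the third step, the vanishing of equivariant distributions supported on the singular strata, is described but not carried out, and it is precisely there that all of the content of the multiplicity one theorem lives. Everything before it is formal reduction; without an actual argument on the nilpotent cone (whether via the metrically-proper/Fourier homogeneity mechanism of Sun--Zhu or the holonomicity bounds of Aizenbud--Gourevitch), nothing has been proved. Two smaller cautions: the anti-involution must be chosen so that it genuinely carries $\pi$ to a representation isomorphic to $\pi^*$ for \emph{every} irreducible Casselman--Wallach $\pi$ (for the indefinite unitary and orthogonal groups this requires a specific conjugate-transpose twist preserving the form and normalising $G'$, not just ``some transpose-type map''); and the claim that ``the real rank is one, so there are only finitely many relevant orbits'' is not the right finiteness statement --- the descent terminates because the relevant $G'$-module has finitely many nilpotent orbits, which has nothing to do with real rank. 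Given that the paper itself only cites \cite{SZ12}, quoting the result is the appropriate move here; attempting to reprove it would be a substantial paper in its own right.
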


Here a representation $\pi$ of $G$ is called \textit{Casselman--Wallach} if it is a smooth representation on a Fr\'{e}chet space which is admissible, of moderate growth and finite under the center of the universal enveloping algebra. We note that the representations $(\pi_\mu^\infty,I_\mu^\infty)$ of $G$ and $(\tau_\nu^\infty,J_\nu^\infty)$ of $G'$ are Casselman--Wallach.

\begin{theorem}\label{thm:PoissonTransformsAsSymBreakingOps}
Let $\frac{d}{2}<s<\rho$ and put $\mu=-s$ and $\nu=-s+\frac{d}{2}$. Then $T^*$ is a scalar multiple of the symmetry breaking operator $B_{\mu,\nu}$. In particular, $B_{\mu,\nu}$ restricts to an isometry $\dot{H}^{s-\frac{d}{2}}(\overline{N}')\to\dot{H}^s(\overline{N})$ (up to scalar) and $T\circ B_{\mu,\nu}$ is a scalar multiple of the identity.
\end{theorem}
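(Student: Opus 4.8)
The plan is to realize both $T^*$ and $B_{\mu,\nu}$ as nonzero elements of one and the same space of $G'$-intertwining operators, and then to invoke the Multiplicity One Theorem (Fact~\ref{fct:MultOneThms}) to force the two to be proportional. Throughout put $\mu=-s$ and $\nu=-s+\frac{d}{2}$, so that $\mu+\rho=\nu+\rho'$ and, under the identifications of Section~\ref{sec:CSReps}, $I_\mu=\dot{H}^s(\overline{N})$ and $J_\nu=\dot{H}^{s-\frac{d}{2}}(\overline{N}')$. By the preceding Corollary the adjoint $T^*\colon J_\nu\to I_\mu$ is a nonzero $G'$-intertwining operator from the unitary representation $\tau_\nu$ to $\pi_\mu|_{G'}$, while by construction (Section~\ref{sec:SymBreakingOps}) $B_{\mu,\nu}\colon J_\nu^{-\infty}\to I_\mu^{-\infty}$ intertwines $\tau_\nu^{-\infty}$ and $\pi_\mu^{-\infty}|_{G'}$.

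To compare them I would restrict both to the space $J_\nu^\infty$ of smooth vectors. Since a continuous $G'$-intertwining operator between unitary representations carries $G'$-smooth vectors to $G'$-smooth vectors, and the latter lie in the Hilbert space $I_\mu\subseteq I_\mu^{-\infty}$ by \eqref{eq:DistributionVectorsEmbeddingsUnitary}, the restriction $T^*|_{J_\nu^\infty}$ is a $G'$-map $J_\nu^\infty\to I_\mu^{-\infty}$; the restriction $B_{\mu,\nu}|_{J_\nu^\infty}$ obviously is one as well. Thus both lie in
$$ \Hom_{G'}\!\big(\tau_\nu^\infty,\,\pi_\mu^{-\infty}|_{G'}\big). $$
Taking $\pi=\pi_{-\mu}^\infty$, so that $\pi^*=\pi_\mu^{-\infty}$ by the definition of the distribution globalization, and $\tau=\tau_\nu^\infty$ — both irreducible Casselman--Wallach representations for $\mu,\nu$ in the relevant range — Fact~\ref{fct:MultOneThms} shows that this Hom space is at most one-dimensional.

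It remains to check that neither restriction vanishes. The operator $T^*$ is nonzero, being an isometry up to scalar by the preceding Corollary, and it is determined by its values on the dense subspace $J_\nu^\infty$, so $T^*|_{J_\nu^\infty}\neq0$; and $B_{\mu,\nu}$ is nonzero at the given parameters because its kernel $K_{-\mu,-\nu}$ is a nonzero (regularized) distribution, whence $B_{\mu,\nu}|_{J_\nu^\infty}\neq0$ by density again. Two nonzero elements of an at most one-dimensional space are proportional, so $T^*|_{J_\nu^\infty}=c\,B_{\mu,\nu}|_{J_\nu^\infty}$ for some $c\neq0$, and by continuity and density $T^*=c\,B_{\mu,\nu}$ on $J_\nu$. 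The two \emph{in particular} assertions then drop out of the preceding Corollary: $B_{\mu,\nu}=c^{-1}T^*$ inherits the property of being an isometry up to scalar, and $T\circ B_{\mu,\nu}=c^{-1}\,TT^*$ is a scalar multiple of the identity since $TT^*$ is.

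The main obstacle is the globalization bookkeeping of the second step: one must verify carefully that the Hilbert-space adjoint $T^*$ and the distribution-level transpose $B_{\mu,\nu}$ really determine elements of the \emph{same} Hom space to which Fact~\ref{fct:MultOneThms} applies. This means reconciling the $G$-smooth vectors $I_\mu^\infty$ with the a priori larger space of $G'$-smooth vectors of $\pi_\mu|_{G'}$, checking that passage to smooth vectors is injective on the intertwiners in play, and confirming the non-vanishing of $B_{\mu,\nu}$ at $\mu=-s$, $\nu=-s+\frac{d}{2}$; everything else is formal once the Multiplicity One Theorem is in place.
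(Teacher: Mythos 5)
Your argument is correct and is essentially the paper's own proof: both operators are viewed, via the embeddings \eqref{eq:DistributionVectorsEmbeddings}, as elements of $\Hom_{G'}(\tau_\nu^\infty,(\pi_{-\mu}^\infty)^*|_{G'})$, and Fact~\ref{fct:MultOneThms} forces proportionality, with the ``in particular'' claims then following from the preceding corollary. The extra care you take over non-vanishing and the smooth-vector bookkeeping is sound but does not change the route.
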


\begin{proof}
Using the embeddings \eqref{eq:DistributionVectorsEmbeddings} we consider both $T^*$ and $B_{\mu,\nu}$ as operators $J_\nu^\infty\to I_\mu^{-\infty}$, intertwining the representations $\tau_\nu^\infty$ and $\pi_\mu^{-\infty}|_{G'}=(\pi_{-\mu}^\infty)^*|_{G'}$. Note that the representations $(\tau_\nu^\infty,J_\nu)$ of $G'$ and $(\pi_{-\mu}^\infty,I_{-\mu}^\infty)$ of $G$ are irreducible. Hence, by Fact~\ref{fct:MultOneThms} the operators $T^*$ and $B_{\mu,\nu}$ are proportional, showing the claim.
\end{proof}

\subsection{Poisson transforms}

Let us collect the results of Corollaries~\ref{cor:SelfAdjoint}, \ref{cor:SolutionsOnDistributions} and Theorem~\ref{thm:PoissonTransformsAsSymBreakingOps}. For $a\in\RR$ put
$$ L_a := |z_n|^2L+aE_n, $$
so that $d\pi_\mu^\infty(C)=L_a+(\mu+\rho)(\mu+\rho-2\rho')$ for $\mu=\frac{a-2}{2}$. Then Corollary~\ref{cor:SelfAdjoint} implies that $L_a$ is self-adjoint on $I_\mu=\dot{H}^{\frac{2-a}{2}}(\overline{N})$ whenever $\mu\in(-\rho,\rho)$ or equivalently $2-2\rho<a<2+2\rho$. This proves the self-adjointness statements of Theorem~\ref{thm:DirichletReal}~(1) and Theorem~\ref{thm:DirichletCplx}~(1). The statements about the spectrum of $L_a$ on $\dot{H}^{\frac{2-a}{2}}(\overline{N})$ are shown in Appendix~\ref{sec:AppendixReal} and \ref{sec:AppendixCplx}.

Next we consider the Dirichlet problem
\begin{equation}
 L_au=0, \qquad u|_{\overline{N}'}=f,\label{eq:DirichletProblem}
\end{equation}
where restriction $u|_{\overline{N}'}$ should be interpreted in terms of the trace operator (see Section~\ref{sec:TraceMaps}). Let $\nu$ be such that $\mu+\rho=\nu+\rho'$. Then Corollary~\ref{cor:SolutionsOnDistributions} and
Theorem~\ref{thm:PoissonTransformsAsSymBreakingOps} imply that
there exists a constant $c_{n,a}$ such that the map
$$ P_a := c_{n,a}B_{\mu,\nu}:\dot{H}^{\frac{2-a-d}{2}}(\overline{N}')=J_\nu\to I_\mu=\dot{H}^{\frac{2-a}{2}}(\overline{N}) $$
is a solution operator for the Dirichlet problem, i.e. for $f\in \dot{H}^{\frac{2-a-d}{2}}(\overline{N}')$ the function $u=P_af\in\dot{H}^{\frac{2-a}{2}}(\overline{N})$ solves \eqref{eq:DirichletProblem}. Further, $P_a$ is (up to a constant) an isometry $\dot{H}^{\frac{2-a-d}{2}}(\overline{N}')\to\dot{H}^{\frac{2-a}{2}}(\overline{N})$ and has the integral representation
\begin{equation*}
 P_af(n) = c_{n,a} \int_{\overline{N}'} K_{-\mu,-\nu}(n,n')f(n')\,dn'.\label{eq:GenIntRepPoissonTrafo}
\end{equation*}
This proves parts of Theorems~\ref{thm:DirichletReal}--\ref{thm:IsometryCplx}, leaving open the following:
\begin{itemize}
\item Uniqueness of the solutions (see Sections~\ref{sec:UniquenessReal} and \ref{sec:UniquenessCplx}),
\item Computation of the constant $c_{n,a}$ (see Sections~\ref{sec:ExplicitInvBdyValuesReal} and \ref{sec:ExplicitInvBdyValuesCplx}),
\item Computation of the constant in the isometry identities (see Sections~\ref{sec:IsometryReal} and \ref{sec:IsometryCplx}),
\item $L^p$-$L^q$ boundedness (see Sections~\ref{sec:LpReal} and \ref{sec:LpCplx}).
\end{itemize}

\newpage

\section{Solutions for explicit boundary values}\label{sec:ExplicitSolutions}

In this section we explicitly calculate the Poisson transforms for certain boundary values.

\subsection{The real case}

\subsubsection{$K'$-invariant boundary values}\label{sec:ExplicitInvBdyValuesReal}

Let $f(y)=(1+|y|^2)^{-\frac{a+n-2}{2}}$, the $K'$-invariant vector in the representation $(\tau_\nu,J_\nu)$ for $\nu=\frac{a-1}{2}$. 

\begin{lemma}
We have $f\in\dot{H}^{\frac{1-a}{2}}(\RR^{n-1})$ and for $x\in\RR^n$:
\begin{align*}
 P_af(x) &= \frac{2^{\frac{a+n-2}{2}}\pi^{\frac{n-1}{2}}\Gamma(\frac{n-1}{2})}{\Gamma(n-1)}c_{n,a}\cdot{_2F_1}\left(\frac{a+n-2}{4},\frac{a+n}{4};\frac{n}{2};1-\frac{4x_n^2}{(1+|x|^2)^2}\right)(1+|x|^2)^{\frac{2-a-n}{2}}.
\end{align*}
\end{lemma}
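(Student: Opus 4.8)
The plan is to compute $u:=P_af$ directly from the integral kernel \eqref{eq:PoissonTrafoReal}, using crucially that $f$ is $K'$-fixed. First I would record the membership $f\in\dot{H}^{\frac{1-a}{2}}(\RR^{n-1})=J_\nu$ with $\nu=\frac{a-1}{2}$: writing $a+n-2=2(\nu+\rho')$ exhibits $f=(1+|y|^2)^{-(\nu+\rho')}$ as the $K'$-spherical vector of the complementary series $\tau_\nu$, which is unitary since $\nu\in(-\rho',\rho')$ for $2-n<a<1$, so $f$ has finite norm. Since $P_a=c_{n,a}B_{\mu,\nu}$ is $G'$-intertwining by Theorem~\ref{thm:PoissonTransformsAsSymBreakingOps} and $f$ is $K'$-invariant, $u$ is a $K'$-invariant vector in $I_\mu=\dot{H}^{\frac{2-a}{2}}(\RR^n)$ with $\mu=\frac{a-2}{2}$.

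In the non-compact model such a vector has the form $u(x)=(1+|x|^2)^{-(\mu+\rho)}\Phi(\omega_n)$, where $\mu+\rho=\frac{a+n-2}{2}$, $\omega_n=\frac{2x_n}{1+|x|^2}$ is the relevant coordinate of the stereographic image of $x$ in $S^n$, and $\Phi$ is even (invariance under the reflection $x_n\mapsto-x_n$ together with the $O(n)$ acting on the remaining sphere coordinates). Because $K'$ acts transitively on the level sets $\{\omega_n=\textup{const}\}$ and each such set meets the axis $\{x'=0\}$, the whole of $u$ is recovered from its restriction to that axis. So it suffices to evaluate $P_af(0,x_n)$ and read off $\Phi$.

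On the axis both factors of the integrand are radial in $y$, and passing to polar coordinates in $\RR^{n-1}$ followed by $s=|y|^2$ reduces the computation to
\[ P_af(0,x_n)=c_{n,a}|x_n|^{1-a}\,\frac{\vol(S^{n-2})}{2}\int_0^\infty\frac{s^{\frac{n-3}{2}}}{(1+s)^{\frac{a+n-2}{2}}(s+x_n^2)^{\frac{n-a}{2}}}\,ds. \]
Here the exponents satisfy $\frac{a+n-2}{2}+\frac{n-a}{2}=n-1$, and the substitution $s=\frac{t}{1-t}$ followed by $t\mapsto1-t$ turns this Euler-type integral into the standard integral representation of the Gauss hypergeometric function, yielding a multiple of ${_2F_1}(\frac{n-a}{2},\frac{n-1}{2};n-1;1-x_n^2)$ with Beta-function constant $\frac{\Gamma(\frac{n-1}{2})^2}{\Gamma(n-1)}$.

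The final step, which I expect to be the main obstacle, is to convert this ``natural'' hypergeometric into the stated one. Since the third parameter $n-1$ is twice the second, the quadratic transformation ${_2F_1}(A,B;2B;z)=(1-\tfrac{z}{2})^{-A}\,{_2F_1}(\tfrac{A}{2},\tfrac{A+1}{2};B+\tfrac12;(\tfrac{z}{2-z})^2)$ applies with $A=\frac{n-a}{2}$, $B=\frac{n-1}{2}$, $z=1-x_n^2$; it sends the argument to $(\frac{1-x_n^2}{1+x_n^2})^2=1-\frac{4x_n^2}{(1+x_n^2)^2}=:Z$ and the third parameter to $\frac n2$, at the cost of $(1-\tfrac{z}{2})^{-A}=2^{(n-a)/2}(1+x_n^2)^{-(n-a)/2}$. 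An Euler transformation ${_2F_1}(A',B';C';Z)=(1-Z)^{C'-A'-B'}{_2F_1}(C'-A',C'-B';C';Z)$ then replaces $\frac{n-a}{4},\frac{n-a+2}{4}$ by $\frac{a+n-2}{4},\frac{a+n}{4}$ and produces $(1-Z)^{(a-1)/2}=(\frac{4x_n^2}{(1+x_n^2)^2})^{(a-1)/2}$. The latter contributes $|x_n|^{a-1}$, which exactly cancels the prefactor $|x_n|^{1-a}$, while its powers of $2$ and $(1+x_n^2)$ combine with those from the quadratic transformation to give $2^{(a+n-2)/2}(1+x_n^2)^{(2-a-n)/2}$; together with $\frac{\Gamma(\frac{n-1}{2})^2}{\Gamma(n-1)}$, $\vol(S^{n-2})=\frac{2\pi^{(n-1)/2}}{\Gamma(\frac{n-1}{2})}$ and $c_{n,a}$ this reproduces the asserted constant. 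Reading off $\Phi$ from the resulting axis formula and re-inserting it into the covariant form of the second paragraph, i.e.\ replacing $(1+x_n^2)$ by $(1+|x|^2)$ and $\omega_n^2$ by $\frac{4x_n^2}{(1+|x|^2)^2}$, yields the formula for arbitrary $x\in\RR^n$. The representation-theoretic reduction and the two classical identities are routine; the only genuinely delicate point is the bookkeeping of the Gamma- and power-of-two constants, and one should double-check the precise normalization of the quadratic transformation being used. (An alternative route avoiding the integral entirely would be to substitute the covariant ansatz into $\Delta_au=0$, obtaining directly a hypergeometric ODE in $\omega_n$.)
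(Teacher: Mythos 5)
Your proposal is correct and follows essentially the same route as the paper: use $K'$-invariance of $f$ and the intertwining property of $P_a$ to reduce to the covariant ansatz $(1+|x|^2)^{\frac{2-a-n}{2}}F(\tfrac{2x_n}{1+|x|^2})$ via stereographic projection, evaluate the integral on the axis $\{x'=0\}$ as a Beta/hypergeometric integral, and convert to the stated form by the quadratic transformation (the paper cites \cite[formulas 3.259~(3) and 9.134~(1)]{GR07} for exactly these two steps). The only cosmetic difference is that your intermediate ${_2F_1}$ has first parameter $\tfrac{n-a}{2}$ where the paper's has $\tfrac{a+n-2}{2}$; these differ by the Euler transformation you invoke, and your constant bookkeeping checks out.
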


\begin{proof}
The function $f\in\dot{H}^{\frac{1-a}{2}}(\RR^{n-1})$ is $K'$-invariant under $\tau_\nu$ for $\nu=\frac{a-1}{2}$. Since $P_a$ intertwines $\tau_\nu$ and $\pi_\mu$ for $\mu=\frac{a-2}{2}$ the Poisson transform $P_af\in\dot{H}^{\frac{2-a}{2}}(\RR^n)$ of $f$ has to be $K'$-invariant under $\pi_\mu$. Now, the stereographic projection induces an isomorphism
$$ \calV: C^\infty(S^n) \to I_\mu^\infty, \quad \calV u(x)=(1+|x|^2)^{-\mu-\rho}u\left(\frac{1-|x|^2}{1+|x|^2},\frac{2x_1}{1+|x|^2},\ldots,\frac{2x_n}{1+|x|^2}\right), $$
under which the action $\pi_\mu^\infty$ of $K=O(1)\times O(n+1)$ on $I_\mu^\infty$ corresponds to the action of $K$ on $C^\infty(S^n)$ induced from the natural action of $K$ on $S^n$. Therefore, the function $\calV^{-1}P_af$ on $S^n$ which is invariant under $K'=O(1)\times O(n)\times O(1)$ only depends on the last coordinate, and hence
$$ P_af(x) = (1+|x|^2)^{\frac{2-a-n}{2}}F\left(\frac{2x_n}{1+|x|^2}\right) $$
for some function $F$ on $[-1,1]$. Putting $x'=0$ in the integral representation \eqref{eq:GenIntRepPoissonTrafo} we find 
\begin{align*}
 P_af(0,x_n) &= c_{n,a}\int_{\RR^{n-1}} \frac{|x_n|^{1-a}}{(|y|^2+x_n^2)^{\frac{n-a}{2}}}(1+|y|^2)^{\frac{2-a-n}{2}} \,dy\\
 &= \frac{2\pi^{\frac{n-1}{2}}}{\Gamma(\frac{n-1}{2})}c_{n,a}|x_n|^{1-n} \int_0^\infty (1+|x_n|^{-2}r^2)^{\frac{a-n}{2}}(1+r^2)^{\frac{2-a-n}{2}}r^{n-2} \,dr\\
 &= \frac{\pi^{\frac{n-1}{2}}B(\frac{n-1}{2},\frac{n-1}{2})}{\Gamma(\frac{n-1}{2})}c_{n,a}\cdot{_2F_1}\left(\frac{a+n-2}{2},\frac{n-1}{2};n-1;1-x_n^2\right)\\
 &= \frac{\pi^{\frac{n-1}{2}}\Gamma(\frac{n-1}{2})}{\Gamma(n-1)}c_{n,a}\cdot{_2F_1}\left(\frac{a+n-2}{4},\frac{a+n}{4};\frac{n}{2};\left(\frac{1-x_n^2}{1+x_n^2}\right)^2\right)\left(\frac{1+x_n^2}{2}\right)^{\frac{2-a-n}{2}},
\end{align*}
where we have used the integral formula \cite[formula 3.259~(3)]{GR07} and the transformation formula \cite[formula 9.134~(1)]{GR07}. This implies
$$ F(z) = \frac{2^{\frac{a+n-2}{2}}\pi^{\frac{n-1}{2}}\Gamma(\frac{n-1}{2})}{\Gamma(n-1)}c_{n,a}\cdot{_2F_1}\left(\frac{a+n-2}{4},\frac{a+n}{4};\frac{n}{2};1-z^2\right) $$
and the proof is complete.
\end{proof}

We can now compute the constant $c_{n,a}$ by putting $x_n=0$ in the above formula for $P_af(x)$. By \cite[Theorem 2.2.2]{AAR99} and the duplication formula for the gamma function we have
$$ P_af(y,0) = \frac{\pi^{\frac{n-1}{2}}\Gamma(\frac{1-a}{2})}{\Gamma(\frac{n-a}{2})}c_{n,a}\cdot f(y). $$
Since $P_af|_{\RR^{n-1}}=f$ we must have
$$ c_{n,a} = \frac{\Gamma(\frac{n-a}{2})}{\pi^{\frac{n-1}{2}}\Gamma(\frac{1-a}{2})}. $$

\begin{remark}
For $a=0$ the differential operator $\Delta_a=|x_n|^2\Delta$ is the Laplacian times $|x_n|^2$. In this case the function $P_af$ simplifies to
$$ P_af(x) = \begin{cases}|x+e_n|^{2-n} & \mbox{for $x_n\geq0$,}\\|x-e_n|^{2-n} & \mbox{for $x_n\leq0$,}\end{cases} $$
which follows from the identity
\begin{align*}
 {_2F_1}\left(\alpha,\alpha+\frac{1}{2};2\alpha;z\right) &= \frac{2^{2\alpha-1}(\sqrt{1-z}+1)^{1-2\alpha}}{\sqrt{1-z}}.
\end{align*}
Note that up to translation $P_af$ agrees on the upper half-space $\{x_n>0\}$ and the lower half-space $\{x_n<0\}$ with the fundamental solution for the Laplacian. At the boundary $\{x_n=0\}$ the function $P_af$ is continuous, but not differentiable:
$$ \singsupp(P_af) = \{(x',0):x'\in\RR^{n-1}\}. $$
\end{remark}

\subsubsection{Constant boundary values}

Let $\1_{\RR^k}$ denote the constant function with value $1$ on $\RR^k$.

\begin{lemma}\label{lem:ExplicitBVConstantReal}
Let $n\geq2$. For $a<1$ we have
$$ P_a\1_{\RR^{n-1}} = \1_{\RR^n}. $$
\end{lemma}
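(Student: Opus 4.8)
The plan is to evaluate the integral $P_a\1_{\RR^{n-1}}(x)$ directly from the integral kernel \eqref{eq:PoissonTrafoReal} and check that every trace of $x$ disappears, leaving exactly the value $1$. First I would exploit the translation invariance of the kernel in the first argument: since $K_{-\mu,-\nu}(x,y)$ depends on $x'$ and $y$ only through $x'-y$, the substitution $y\mapsto y+x'$ reduces the computation to the case $x'=0$. Thus it suffices to show that $c_{n,a}|x_n|^{1-a}\int_{\RR^{n-1}}(|y|^2+x_n^2)^{-\frac{n-a}{2}}\,dy = 1$ for every $x_n\neq0$.

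Next I would pass to polar coordinates on $\RR^{n-1}$, writing the integral as $\frac{2\pi^{(n-1)/2}}{\Gamma(\frac{n-1}{2})}\int_0^\infty r^{n-2}(r^2+x_n^2)^{-\frac{n-a}{2}}\,dr$, using that the surface area of $S^{n-2}$ equals $2\pi^{(n-1)/2}/\Gamma(\frac{n-1}{2})$. The substitution $r=|x_n|s$ then factors out the power $|x_n|^{(n-1)-(n-a)}=|x_n|^{a-1}$, which cancels precisely against the prefactor $|x_n|^{1-a}$; this exact cancellation is the conceptual content of the lemma and is why the output is a genuine constant rather than a nontrivial function of $x_n$. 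The remaining one-dimensional integral $\int_0^\infty s^{n-2}(1+s^2)^{-\frac{n-a}{2}}\,ds$ is a standard Beta integral, equal to $\tfrac12 B(\tfrac{n-1}{2},\tfrac{1-a}{2})=\frac{\Gamma(\frac{n-1}{2})\Gamma(\frac{1-a}{2})}{2\Gamma(\frac{n-a}{2})}$, where I have used $\tfrac{n-1}{2}+\tfrac{1-a}{2}=\tfrac{n-a}{2}$.

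Assembling these factors gives $P_a\1_{\RR^{n-1}}(x)=c_{n,a}\,\pi^{(n-1)/2}\Gamma(\tfrac{1-a}{2})/\Gamma(\tfrac{n-a}{2})$, and substituting the value $c_{n,a}=\pi^{-(n-1)/2}\Gamma(\tfrac{n-a}{2})\Gamma(\tfrac{1-a}{2})^{-1}$ computed in the preceding subsection makes every power of $\pi$ and every Gamma factor cancel, yielding $1$. There is no serious obstacle here; the only points needing care are the two convergence conditions of the radial integral, which pin down exactly the hypotheses of the statement: integrability at $r=0$ requires $n-2>-1$, i.e. $n\geq2$, while integrability at $r=\infty$ requires $n-2-(n-a)<-1$, i.e. $a<1$. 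Finally, since $\1_{\RR^{n-1}}$ is not an honest element of $\dot{H}^{\frac{1-a}{2}}(\RR^{n-1})$ but a distribution vector, the identity is to be read pointwise for the (absolutely convergent) integral, equivalently as an identity in the distribution globalization $I_\mu^{-\infty}$ under the embeddings \eqref{eq:DistributionVectorsEmbeddings}.
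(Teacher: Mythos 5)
Your proposal is correct and follows essentially the same route as the paper: translate to $x'=0$, pass to polar coordinates on $\RR^{n-1}$, and evaluate the resulting radial integral as a Beta integral (the paper cites Gradshteyn--Ryzhik 3.251(2) for the same identity), with the constants cancelling exactly as you compute. The closing remark on reading the identity in the distribution globalization, and the explicit tracking of where $n\geq2$ and $a<1$ enter, are correct additions but do not change the argument.
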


\begin{proof}
We compute
\begin{align*}
 P_a\1_{\RR^{n-1}}(x) &= c_{n,a} \int_{\RR^{n-1}} \frac{|x_n|^{1-a}}{(|x'-y|^2+x_n^2)^{\frac{n-a}{2}}}\,dy\\
 &= c_{n,a}|x_n|^{1-a} \int_{\RR^{n-1}} (|y|^2+x_n^2)^{\frac{a-n}{2}}\,dy\\
 &= \frac{2c_{n,a}\pi^{\frac{n-1}{2}}}{\Gamma(\frac{n-1}{2})}|x_n|^{1-a} \int_0^\infty (r^2+x_n^2)^{\frac{a-n}{2}} r^{n-2}\,dr.
\end{align*}
By the integral formula \cite[formula 3.251~(2)]{GR07} we have
$$ \int_0^\infty (r^2+x_n^2)^{\frac{a-n}{2}} r^{n-2}\,dr = \frac{1}{2}|x_n|^{a-1}B\left(\frac{n-1}{2},\frac{1-a}{2}\right) $$
and the claimed identity follows.
\end{proof}

\newpage

\subsection{The complex case}

\subsubsection{$K'$-invariant boundary values}\label{sec:ExplicitInvBdyValuesCplx}

Let $f(z',t')=((1+|z'|^2)^2+t'^2)^{-\frac{a+2n}{4}}$.

\begin{lemma}
We have $f\in\dot{H}^{-\frac{a}{2}}(H^{2n-1})$ and
\begin{multline*}
 P_af(x) = \frac{2^{\frac{a+2n}{2}}\pi^{n-\frac{1}{2}}\Gamma(n-\frac{1}{2})}{\Gamma(2n-1)}c_{n,a}\cdot{_2F_1}\left(\frac{a+2n}{4},\frac{a+2n}{4};n;1-\frac{4|z_n|^2}{(1+|z|^2)^2+t^2}\right)\\
 \times((1+|z|^2)^2+t^2)^{-\frac{a+2n}{4}}.
\end{multline*}
\end{lemma}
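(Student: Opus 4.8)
The plan is to follow the real-case lemma line by line, the only genuinely new feature being that the slice integral is now two-dimensional over $\overline{N}'=H^{2n-1}$. First I would fix the parameters: here $\rho=n+1$ and $\rho'=n$, so the Poisson transform is $P_a=c_{n,a}B_{\mu,\nu}$ with $\mu=\frac{a-2}{2}$ and $\nu=\mu+\rho-\rho'=\frac a2$, whence $\nu+\rho'=\frac{a+2n}{2}$. The given $f=((1+|z'|^2)^2+t'^2)^{-\frac{\nu+\rho'}{2}}$ is therefore exactly the $K'$-invariant (spherical) vector of $(\tau_\nu,J_\nu)$, i.e. the constant function on the sphere transported to the noncompact model. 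Since $-2n<a<0$ gives $\nu=\frac a2\in(-\rho',\rho')$, the representation $\tau_\nu$ is a unitary complementary series and its spherical vector lies in $J_\nu=\dot{H}^{-\frac a2}(H^{2n-1})$; this proves $f\in\dot{H}^{-\frac a2}(H^{2n-1})$. As $B_{\mu,\nu}$ intertwines $\tau_\nu$ with $\pi_\mu|_{G'}$, the image $P_af$ is a $K'$-invariant vector for $\pi_\mu$.

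Next I would invoke the stereographic realization. As with the map $\calV$ in the real case, there is a $K$-equivariant isomorphism $C^\infty(S^{2n+1})\to I_\mu^\infty$; under it $K'\simeq U(1)\times U(n)\times U(1)$ acts on $S^{2n+1}\subseteq\CC^{n+1}$ fixing only the modulus of the last coordinate $w_{n+1}$. Hence any $K'$-invariant element of $I_\mu^\infty$ depends on the sphere only through $|w_{n+1}|$, equivalently (after pulling back and stripping off the conformal weight) only through $\frac{4|z_n|^2}{(1+|z|^2)^2+t^2}$. This forces
\[ P_af(z,t)=((1+|z|^2)^2+t^2)^{-\frac{a+2n}{4}}\,\Phi\!\left(1-\frac{4|z_n|^2}{(1+|z|^2)^2+t^2}\right) \]
for a single function $\Phi$ on $[0,1)$ still to be determined.

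To pin down $\Phi$ I would restrict to the slice $z_1=\dots=z_{n-1}=0$, $t=0$, where the argument equals $\bigl(\frac{1-|z_n|^2}{1+|z_n|^2}\bigr)^2$. A one-line Heisenberg computation gives $(0,z_n,0)^{-1}\cdot(z',0,t')=(z',-z_n,t')$, so the kernel of Theorem~\ref{thm:IsometryCplx}~(1) yields
\[ P_af(0,z_n,0)=c_{n,a}|z_n|^{-a}\int_{\CC^{n-1}}\int_{\RR}\frac{((1+|z'|^2)^2+t'^2)^{-\frac{a+2n}{4}}}{((|z'|^2+|z_n|^2)^2+t'^2)^{\frac{2n-a}{4}}}\,dt'\,dz'. \]

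The remaining task, and the main obstacle, is this double integral: unlike the real case it does not separate, because the central variable $t'$ couples the two conformal kernels. What makes it tractable is that the two exponents sum to the integer $\frac{2n-a}{4}+\frac{a+2n}{4}=n$. A Feynman-parameter (beta-function) identity then collapses the $t'$-integral to the single factor $\bigl(\xi(|z'|^2+|z_n|^2)^2+(1-\xi)(1+|z'|^2)^2\bigr)^{\frac12-n}$. Writing this bracket as $\rho^2+2\beta\rho+\gamma$ in $\rho=|z'|^2$, the identity $\gamma-\beta^2=\xi(1-\xi)(1-|z_n|^2)^2$ completes the square, and the surviving $(\rho,\xi)$-integration reduces to standard beta- and Gauss-hypergeometric evaluations of the type \cite[3.259]{GR07}, followed by a quadratic transformation \cite[9.134]{GR07} to reach the parameters $\bigl(\frac{a+2n}{4},\frac{a+2n}{4};n;\,\cdot\,\bigr)$. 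This fixes the numerical prefactor multiplying $c_{n,a}$ and matches the asserted formula on the slice, hence everywhere by the second paragraph. As a sanity check, the restriction of $P_af$ to a $G'$-orbit is, up to normalization, the elementary spherical function of $U(1,n)$, which is known to be precisely such a Gauss hypergeometric function.
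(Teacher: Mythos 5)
Your first half coincides with the paper's: $K'$-invariance of $f$ as the spherical vector of $(\tau_\nu,J_\nu)$ gives $f\in\dot{H}^{-\frac{a}{2}}(H^{2n-1})$, and intertwining plus the stereographic realization forces $P_af(z,t)=((1+|z|^2)^2+t^2)^{-\frac{a+2n}{4}}F(w)$ with $w=\frac{4|z_n|^2}{(1+|z|^2)^2+t^2}$. Where you diverge is in determining $F$. The paper evaluates the integral at the \emph{single} point $(0,z_n,0)$ with $|z_n|=1$: there the two kernel factors combine into $((1+|z'|^2)^2+t'^2)^{-n}$, the integral collapses to two elementary beta integrals, and one obtains only the value $F(1)$. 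It then invokes Corollary~\ref{cor:SolutionsOnDistributions} ($\calL_aP_af=0$), which in the variable $w$ is a hypergeometric equation with exponents $0$ and $1-n$ at the regular singularity $w=1$, so regularity there pins $F$ down as $F(1)\cdot{_2F_1}(\frac{a+2n}{4},\frac{a+2n}{4};n;1-w)$. You instead try to compute the integral on the whole slice $z'=0$, $t=0$; that slice does cover all of $w\in(0,1]$, so the plan is sound in principle, but it replaces a one-point evaluation plus an ODE by a genuinely harder integral.

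The gap is in that integral. Your Feynman parametrization, the $t'$-integration, and the identity $\gamma-\beta^2=\xi(1-\xi)(1-|z_n|^2)^2$ are all correct, but what remains is
$$ \int_0^1\!\!\int_0^\infty \xi^{\frac{2n-a}{4}-1}(1-\xi)^{\frac{2n+a}{4}-1}\,\rho^{n-2}\,(\rho^2+2\beta\rho+\gamma)^{\frac12-n}\,d\rho\,d\xi, $$
and the inner $\rho$-integral is not a beta integral: it is itself a ${}_2F_1$ whose argument $1-\gamma/\beta^2$ depends on $\xi$. Integrating that hypergeometric against the beta density in $\xi$ is exactly the nontrivial step, it is not of the type \cite[3.259]{GR07}, and you give no argument that it closes to a single Gauss function with equal upper parameters $\frac{a+2n}{4}$ in the claimed variable. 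As written the proof therefore does not determine $\Phi$; the concluding remark about spherical functions of $U(1,n)$ is a consistency check, not a derivation. The cheap repair is the paper's: you already know $\calL_aP_af=0$ from the intertwining property, so one value of $F$ suffices, and $|z_n|=1$ is precisely the point at which your double integral degenerates to beta functions.
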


\begin{proof}
Similar to the proof in the real case, using the stereographic projection for the Heisenberg group
$$ H^{2n+1} \to S^{2n+1}\subseteq\CC^{n+1}, \quad (z,t)\mapsto\left(\frac{(1-|z|^2)-it}{(1+|z|^2)+it},\frac{2z_1}{(1+|z|^2)+it},\ldots,\frac{2z_n}{(1+|z|^2)+it}\right) $$
we find that the function $u=P_af\in\dot{H}^{\frac{2-a}{2}}(H^{2n+1})$ which is invariant under the action $\pi_\mu$ of $K'=U(1)\times U(n)\times U(1)$ has to be of the form
$$ u(z,t) = ((1+|z|^2)^2+t^2)^{-\frac{a+2n}{4}}F\left(\frac{4|z_n|^2}{(1+|z|^2)^2+t^2}\right) $$
for some function $F$ on $[-1,1]$. Put $w=\frac{4|z_n|^2}{(1+|z|^2)^2+t^2}$. For $(z,t)=(0,z_n,0)$ with $|z_n|=1$ we have $w=1$ and hence
$$ u(z,t) = 2^{-\frac{a+2n}{2}}F(1). $$
On the other hand, using the integral expression \eqref{eq:GenIntRepPoissonTrafo} we find
\begin{align*}
 P_af(0,z_n,0) &= c_{n,a} \int_{H^{2n-1}} \frac{1}{|(z',-z_n,t')|^{2n-a}}((1+|z'|^2)^2+t'^2)^{-\frac{a+2n}{4}} \,d(z',t')\\
 &= 2\vol(S^{2n-3})c_{n,a}\int_0^\infty\int_0^\infty ((1+r^2)^2+s^2)^{-n} r^{2n-3} \,dr\,ds\\
 &= \frac{2\pi^{n-1}B(\frac{1}{2},n-\frac{1}{2})}{\Gamma(n-1)}c_{n,a} \int_0^\infty (1+r^2)^{1-2n}r^{2n-3} \,dr\\
 &= \frac{\pi^{n-1}B(\frac{1}{2},n-\frac{1}{2})B(n-1,n)}{\Gamma(n-1)}c_{n,a},
\end{align*}
where we have used the integral formula \cite[equation 3.251~(11)]{GR07} twice. This implies
$$ F(1) = \frac{2^{\frac{a+2n}{2}}\pi^{n-\frac{1}{2}}\Gamma(n-\frac{1}{2})}{\Gamma(2n-1)}c_{n,a}. $$
Now, by Corollary~\ref{cor:SolutionsOnDistributions} we know that $\calL_au=0$. An elementary calculation shows that
\begin{multline*}
 \calL_au(z,t) = 4((1+|z|^2)^2+t^2)^{-\frac{a+2n}{4}}\cdot w\Big(w(1-w)F''(w)+(\tfrac{a+2}{2}-\tfrac{a+2n+2}{2}w)F'(w)-(\tfrac{a+2n}{4})^2F(w)\Big).
\end{multline*}
The differential equation for $F$ is of hypergeometric type. At the regular singularity $w=1$ it has exponents $0$ and $1-n\in-\NN$ and hence there exist two linear indepentent solutions $F_1$ and $F_2$ with asymptotic behaviour $F_1(w)\sim 1$ and $F_2(w)\sim(1-w)^{1-n}$ as $w\to1$.  Since $F(w)$ is regular at $w=1$ we must have
\begin{align*}
 F(w) &= \frac{2^{\frac{a+2n}{2}}\pi^{n-\frac{1}{2}}\Gamma(n-\frac{1}{2})}{\Gamma(2n-1)}c_{n,a}\cdot F_1(w)\\
 &= \frac{2^{\frac{a+2n}{2}}\pi^{n-\frac{1}{2}}\Gamma(n-\frac{1}{2})}{\Gamma(2n-1)}c_{n,a}\cdot{_2F_1}\left(\frac{a+2n}{4},\frac{a+2n}{4};n;1-w\right)
\end{align*}
which shows the claim.
\end{proof}

We can now compute the constant $c_{n,a}$ by putting $z_n=0$ in the above formula for $P_af(z,t)$. By \cite[Theorem 2.2.2]{AAR99} and the duplication formula for the gamma function we have
$$ P_af(z',0,t') = \frac{2^{\frac{a-2n+4}{2}}\pi^n\Gamma(-\frac{a}{2})}{\Gamma(\frac{2n-a}{4})^2}c_{n,a}\cdot f(z',t'). $$
Since $(P_af)|_{H^{2n-1}}=f$ we must have
$$ c_{n,a} = \frac{2^{\frac{2n-a-4}{2}}\Gamma(\frac{2n-a}{4})^2}{\pi^n\Gamma(-\frac{a}{2})}. $$

\subsubsection{Constant boundary values}

Let $\1_{H^{2k+1}}$ denote the constant function with value $1$ on the Heisenberg group $H^{2k+1}$.

\begin{lemma}\label{lem:ExplicitBVConstantCplx}
Let $n\geq2$. For $a<0$ we have
$$ P_a\1_{H^{2n-1}} = \1_{H^{2n+1}}. $$
\end{lemma}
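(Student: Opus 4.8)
The plan is to mirror the direct computation carried out for the Euclidean analogue in Lemma~\ref{lem:ExplicitBVConstantReal}, reducing everything to tabulated one-dimensional integrals. Setting $f=\1_{H^{2n-1}}$ in the integral kernel of $P_a$ given in Theorem~\ref{thm:IsometryCplx}~(1) and pulling the factor $|z_n|^{-a}$ out of the integral, it suffices to evaluate
$$ I(z,t) := \int_{H^{2n-1}} |(z,t)^{-1}\cdot(z',0,t')|^{-(2n-a)}\,d(z',t'), $$
where we write $z=(\tilde z,z_n)$ with $\tilde z\in\CC^{n-1}$. Using the Heisenberg group law together with $(z,t)^{-1}=(-z,-t)$, one checks that the $\CC^n$-component of $(z,t)^{-1}\cdot(z',0,t')$ equals $(z'-\tilde z,-z_n)$ and its central component equals $-t+t'-2\Im(\tilde z\cdot\overline{z'})$, so that the norm depends on $(z',t')$ only through $A:=|z'-\tilde z|^2+|z_n|^2$ and a shifted central variable.

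The first step is then to integrate out the central variable $t'$. After substituting for the central component, the inner integral becomes $\int_\RR (A^2+\sigma^2)^{-\frac{2n-a}{4}}\,d\sigma = A^{\frac{2+a-2n}{2}}B(\tfrac12,\tfrac{2n-a-2}{4})$, which converges since $2n-a>2$; crucially, the result no longer depends on $t$. Translating $z'\mapsto z'+\tilde z$ and passing to polar coordinates in $\CC^{n-1}\cong\RR^{2n-2}$ (this uses $n\geq2$, with surface measure $\vol(S^{2n-3})=\frac{2\pi^{n-1}}{\Gamma(n-1)}$), the remaining radial integral
$$ \int_0^\infty (r^2+|z_n|^2)^{\frac{a-2n+2}{2}}r^{2n-3}\,dr = \tfrac12|z_n|^a B\big(n-1,-\tfrac a2\big) $$
is the standard Beta integral \cite[formula 3.251~(2)]{GR07}. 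It converges precisely when $-\tfrac a2>0$, i.e. $a<0$, which is exactly the hypothesis of the Lemma. The factor $|z_n|^a$ it produces cancels the prefactor $|z_n|^{-a}$, so $P_a\1_{H^{2n-1}}(z,t)$ is a constant independent of $(z,t)$, as it must be for the claimed identity; this is also consistent with $P_a\1_{H^{2n-1}}$ solving $\calL_au=0$ by Corollary~\ref{cor:SolutionsOnDistributions}.

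It then remains to check that this constant equals $1$. Collecting the factors, $P_a\1_{H^{2n-1}}$ equals
$$ c_{n,a}\,\frac{\pi^{n-1}}{\Gamma(n-1)}\,B\big(\tfrac12,\tfrac{2n-a-2}{4}\big)\,B\big(n-1,-\tfrac a2\big), $$
and substituting the explicit value $c_{n,a}=2^{\frac{2n-a-4}{2}}\pi^{-n}\Gamma(\tfrac{2n-a}{4})^2\Gamma(-\tfrac a2)^{-1}$ and expanding the Beta functions, the factors $\Gamma(-\tfrac a2)$, $\Gamma(n-1)$ and one copy of $\Gamma(\tfrac{2n-a}4)$ cancel. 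The remaining expression reduces to $\frac{2^{(2n-a-4)/2}}{\sqrt\pi}\cdot\frac{\Gamma(\tfrac{2n-a-2}{4})\Gamma(\tfrac{2n-a}{4})}{\Gamma(n-1-\tfrac a2)}$, which is seen to equal $1$ upon applying the Legendre duplication formula with $x=\tfrac{2n-a-2}{4}$ (so that $x+\tfrac12=\tfrac{2n-a}{4}$ and $2x=n-1-\tfrac a2$). I expect this last gamma-function bookkeeping to be the only delicate point; everything else is a routine reduction, with the two convergence conditions $a<2n-2$ and $a<0$ tracking exactly where the hypotheses $n\geq2$ and $a<0$ enter.
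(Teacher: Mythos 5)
Your proposal is correct and follows essentially the same route as the paper: translate away $(\tilde z,t)$ using the group law, separate the central and $\CC^{n-1}$ integrals, and evaluate both via the Beta integral \cite[formula 3.251~(2)]{GR07}, with the factor $|z_n|^a$ cancelling the prefactor $|z_n|^{-a}$. You additionally carry out the final duplication-formula bookkeeping that the paper leaves implicit, and it checks out; the only tiny quibble is that the condition $a<2n-2$ for the central integral is already implied by $a<0$, so the hypothesis $n\geq2$ really enters through the convergence of $r^{2n-3}$ at $r=0$ (and the use of $S^{2n-3}$), not there.
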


\begin{proof}
We have
\begin{align*}
 P_a\1_{H^{2n-1}}(z,t) &= c_{n,a} \int_{H^{2n-1}} \frac{|z_n|^{-a}}{|(z,t)^{-1}\cdot(z',0,t')|^{2n-a}}\,d(z',t')\\
 &= c_{n,a}|z_n|^{-a} \int_{\CC^{n-1}}\int_\RR |(z',z_n,t')|^{a-2n}\,dt'\,dz'\\
 &= c_{n,a}|z_n|^{-a} \left(\int_{\CC^{n-1}} (|z'|^2+|z_n|^2)^{\frac{a-2n+2}{2}}\,dz'\right)\left(\int_\RR(1+t'^2)^{\frac{a-2n}{4}}\,dt'\right)\\
 &= \frac{4c_{n,a}\pi^{n-1}}{\Gamma(n-1)} \left(\int_0^\infty (1+r^2)^{\frac{a-2n+2}{2}}r^{2n-3}\,dr\right)\left(\int_0^\infty(1+s^2)^{\frac{a-2n}{4}}\,ds\right).
\end{align*}
Evaluating the two integrals using \cite[formula 3.251~(2)]{GR07} shows the claim.
\end{proof}

\section{Computations in the real case}\label{sec:RealCase}

\subsection{The Casimir operator}\label{sec:CasimirReal}

An explicit basis of the Lie algebra $\frakg=\frako(1,n+1)$ is given by the generator $H$ of $\fraka$ and the elements
\begin{align*}
 M_{jk} &:= E_{j+2,k+2}-E_{k+2,j+2}, && 1\leq j<k\leq n,\\
 X_j &:= E_{j+2,1}-E_{j+2,2}+E_{1,j+2}+E_{2,j+2}, && 1\leq j\leq n,\\
 \overline{X}_j &:= E_{j+2,1}+E_{j+2,2}+E_{1,j+2}-E_{2,j+2}, && 1\leq j\leq n.
\end{align*}
Here $M_{jk}$ span $\frakm$, $X_j$ span $\frakn$ and $\overline{X}_j$ span $\overline{\frakn}$.

The action of the generators $MA\overline{N}$ and $w_0$ of $G$ on $I_\mu^\infty$ is then given by
\begin{align*}
 \pi_\mu^\infty(\overline{n}_{x'})f(x) &= f(x-x'), && \overline{n}_x\in\overline{N},\\
 \pi_\mu^\infty(\diag(\lambda,\lambda,m))f(x) &= f(\lambda m^{-1}x), && \lambda\in O(1),m\in O(n),\\
 \pi_\mu^\infty(e^{sH})f(x) &= e^{(\mu+\rho)s}f(e^sx), && e^{sH}\in A,\\
 \pi_\mu^\infty(w_0)f(x) &= |x|^{-2(\mu+\rho)}f\left(-\frac{x}{|x|^2}\right).
\end{align*}
This immediately gives the action of the differential representation on $\frakm$, $\fraka$ and $\overline{\frakn}$:
$$ d\pi_\mu^\infty(M_{jk}) = x_j\frac{\partial}{\partial x_k}-x_k\frac{\partial}{\partial x_j}, \qquad d\pi_\mu^\infty(H) = E+\mu+\rho, \qquad d\pi_\mu^\infty(\overline{X}_j) = -\frac{\partial}{\partial x_j}, $$
where $E=\sum_{k=1}^nx_k\frac{\partial}{\partial x_k}$ denotes the Euler operator on $\RR^n$. Thanks to the relation $\Ad(w_0)\overline{X}_j=-X_j$ we have
$$ d\pi_\mu^\infty(X_j) = -\pi_\mu^\infty(w_0)d\pi_\mu^\infty(\overline{X}_j)\pi_\mu^\infty(w_0), $$
which is easily shown to be equal to
$$ d\pi_\mu^\infty(X_j) = -|x|^2\frac{\partial}{\partial x_j}+2x_j(E+\mu+\rho). $$

Now the Casimir element $C=C_1$ can be expressed using the above constructed basis of $\frakg$:
$$ C = H^2-(n-1)H-\sum_{1\leq j<k\leq n-1}M_{jk}^2+\sum_{j=1}^{n-1}X_j\overline{X}_j. $$
An elementary calculation using the previously derived formulas for the differential representation shows that
\begin{equation}
 d\pi_\mu^\infty(C) = x_n^2\Delta+2(\mu+1)x_n\frac{\partial}{\partial x_n}+(\mu+\rho)(\mu-\rho+1),\label{eq:FormulaCasimirReal}
\end{equation}
where $\Delta=\sum_{k=1}^n\frac{\partial^2}{\partial x_k^2}$ denotes the Laplacian on $\RR^n$.

\subsection{Uniqueness}\label{sec:UniquenessReal}

For the proof of uniqueness of solutions to the Dirichlet problem and for the isometry property of the Poisson transform $P_a$ we use Euclidean Fourier analysis (see Section~\ref{sec:SobolevSpacesReal} for the notation). Consider the Euclidean Fourier transform
$$ \calF_{\RR^n}:\dot{H}^{\frac{2-a}{2}}(\RR^n)\to L^2(\RR^n,|\xi|^{2-a}d\xi), \quad u\mapsto\widehat{u}. $$
Then we have
$$ \widehat{\Delta_au}(\xi) = \Big(\partial_{\xi_n}^2|\xi|^2-a\partial_{\xi_n}\xi_n\Big)\widehat{u}(\xi) = \Big(|\xi|^2\partial_{\xi_n}^2-(a-4)\xi_n\partial_{\xi_n}+(2-a)\Big)\widehat{u}(\xi) $$
and
$$ \widehat{u|_{\RR^{n-1}}}(\xi') = \frac{1}{\sqrt{2\pi}} \int_{-\infty}^\infty \widehat{u}(\xi',\xi_n) \,d\xi_n. $$

Let $z=\frac{\xi_n}{|\xi'|}$ and define
\begin{equation}
 v(\xi',z) := \widehat{u}(\xi',|\xi'|z).\label{eq:DefinitionFctV}
\end{equation}
Then
$$ \widehat{\Delta_au}(\xi) = \calD_{a,z}v(\xi',z) \qquad \text{and} \qquad \widehat{u|_{\RR^{n-1}}}(\xi') = \frac{|\xi'|}{\sqrt{2\pi}} \int_{-\infty}^\infty v(\xi',z) \,dz, $$
where $\calD_{a,z}$ is the ordinary differential operator
$$ \calD_{a,z} := (1+z^2)\frac{d^2}{dz^2}-(a-4)z\frac{d}{dz}-(a-2). $$

Now let $u\in\dot{H}^{\frac{2-a}{2}}(\RR^n)$ be a solution of the Dirichlet problem \eqref{eq:RealBdyValueProblem} with boundary value $f\in\dot{H}^{\frac{1-a}{2}}(\RR^{n-1})$. Then the corresponding function $v\in L^2(\RR^n,|\xi|^{2-a}d\xi)$ defined in \eqref{eq:DefinitionFctV} satisfies
$$ \calD_{a,z}v(\xi',z)=0 \qquad \text{and} \qquad \int_{-\infty}^\infty v(\xi',z) \,dz = \frac{\sqrt{2\pi}}{|\xi'|}\widehat{f}(\xi') $$
for almost every $\xi\in\RR^{n-1}$. This implies
$$ v(\xi',z) = \frac{\sqrt{2\pi}}{|\xi'|}\widehat{f}(\xi')\phi(z) $$
where $\phi(z)$ satisfies
$$ \calD_{a,z}\phi=0 \qquad \text{and} \qquad \int_{-\infty}^\infty \phi(z) \,dz = 1. $$
The equation $\calD_{a,z}\phi=0$ has a fundamental system of solutions spanned by
\begin{align*}
 \phi_1(z) &= {_2F_1}\left(\frac{1}{2},\frac{2-a}{2};\frac{1}{2};-z^2\right) = (1+z^2)^{\frac{a-2}{2}},\\
 \phi_2(z) &= z\cdot{_2F_1}\left(1,\frac{3-a}{2};\frac{3}{2};-z^2\right).
\end{align*}
We note that the property $\widehat{u}\in L^2(\RR^n,|\xi|^{2-a}d\xi)$ is equivalent to $\widehat{f}\in L^2(\RR^n,|\xi'|^{1-a}d\xi')$ and $\phi\in L^2(\RR,(1+z^2)^{\frac{2-a}{2}}dz)$. By the asymptotics of the hypergeometric function (see e.g. \cite[Theorem 2.3.2]{AAR99}) $\phi_2$ is not contained in $L^2(\RR,(1+z^2)^{\frac{2-a}{2}}dz)$ and hence
\begin{equation}
 v(\xi',z) = \frac{\sqrt{2}\Gamma(\frac{2-a}{2})}{\Gamma(\frac{1-a}{2})}|\xi'|^{-1}\widehat{f}(\xi')(1+z^2)^{\frac{a-2}{2}},\label{eq:SolutionRealFTside1}
\end{equation}
where we used the following integral formula (see e.g. \cite[equation~3.251~(2)]{GR07}):
$$ \int_{-\infty}^\infty\phi_1(z) \,dz=\frac{\sqrt{\pi}\Gamma(\frac{1-a}{2})}{\Gamma(\frac{2-a}{2})}. $$
This determines $v$ and hence $u$ uniquely, proving uniqueness of solutions of \eqref{eq:RealBdyValueProblem}.


\subsection{Isometry}\label{sec:IsometryReal}

Let $f\in\dot{H}^{\frac{1-a}{2}}(\RR^{n-1})$ and $u=P_af\in\dot{H}^{\frac{2-a}{2}}(\RR^n)$ then, using \eqref{eq:IsometryFourierTrafoSobolevL2}, \eqref{eq:DefinitionFctV} and \eqref{eq:SolutionRealFTside1} we find
\begin{align*}
 \|u\|_{\dot{H}^{\frac{2-a}{2}}(\RR^n)}^2 &= \|\widehat{u}\|_{L^2(\RR^n,|\xi|^{2-a}d\xi)}^2 = \int_{\RR^n} |\widehat{u}(\xi)|^2|\xi|^{2-a} \,d\xi\\
 &= \int_{\RR^{n-1}}\int_{-\infty}^\infty |\xi'|\cdot|v(\xi',z)|^2 (|\xi'|^2+|\xi'|^2z^2)^{\frac{2-a}{2}} \,dz \,d\xi'\\
 &= \frac{2\Gamma(\frac{2-a}{2})^2}{\Gamma(\frac{1-a}{2})^2} \left(\int_{-\infty}^\infty(1+z^2)^{\frac{a-2}{2}}\,dz\right) \int_{\RR^{n-1}} |\widehat{f}(\xi')|^2 |\xi'|^{1-a}\,d\xi'\\
 &= \frac{2\sqrt{\pi}\Gamma(\frac{2-a}{2})}{\Gamma(\frac{1-a}{2})} \|f\|_{\dot{H}^{\frac{1-a}{2}}(\RR^{n-1})}^2.
\end{align*}
In view of the duplication formula for the Gamma function this shows \eqref{eq:IsometryReal1}. 

\subsection{$L^p$-$L^q$ boundedness}\label{sec:LpReal}

We now show Theorem~\ref{thm:IsometryReal}~(2). Parts of the proof can also be found in \cite{Che14}. We include a full proof for completeness.

\begin{proposition}
For any $1<p\leq\infty$ and $q=\frac{n}{n-1}p$ the Poisson transform $P_a$ is a bounded operator
$$ P_a: L^p(\RR^{n-1})\to L^q(\RR^n). $$
More precisely,
$$ \Vert P_af\Vert_q \leq (2c_{n,a}^{\frac{1}{n-1}})^{\frac{1}{q}}\Vert f\Vert_p, \qquad \mbox{for all $f\in L^p(\RR^{n-1})$.} $$
\end{proposition}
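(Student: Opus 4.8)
The plan is to prove the $L^p$-$L^q$ boundedness directly from the explicit integral kernel of $P_a$ given in \eqref{eq:PoissonTrafoReal}, treating the cases $p=\infty$ and $1<p<\infty$ separately. The key structural observation is that the kernel
$$ P_af(x) = c_{n,a}\int_{\RR^{n-1}}\frac{|x_n|^{1-a}}{(|x'-y|^2+x_n^2)^{\frac{n-a}{2}}}f(y)\,dy $$
depends on $x'$ and $y$ only through the difference $x'-y$, so that for each fixed $x_n$ the transform is convolution on $\RR^{n-1}$ against the kernel
$$ k_{x_n}(w) := c_{n,a}\frac{|x_n|^{1-a}}{(|w|^2+x_n^2)^{\frac{n-a}{2}}}. $$
First I would compute the $L^1(\RR^{n-1})$-norm of $k_{x_n}$: by the substitution $w=|x_n|r$ together with the normalization constant, one finds that $\int_{\RR^{n-1}}k_{x_n}(w)\,dw$ is a constant independent of $x_n$, and in fact equals $1$. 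This is exactly the content of Lemma~\ref{lem:ExplicitBVConstantReal}, which gives $P_a\1_{\RR^{n-1}}=\1_{\RR^n}$, so I can invoke it to conclude $\|k_{x_n}\|_{L^1(\RR^{n-1})}=1$ for all $x_n\neq0$.

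The endpoint $p=\infty$ (hence $q=\infty$) is then immediate: since $k_{x_n}\geq0$ and integrates to $1$, we get $|P_af(x)|\leq\|f\|_\infty$ pointwise, so $\|P_af\|_\infty\leq\|f\|_\infty$, matching the claimed constant since $(2c_{n,a}^{1/(n-1)})^{1/q}\to1$ as $q\to\infty$. For the range $1<p<\infty$ the natural tool is Young's convolution inequality applied in the $x'$-variable for each fixed $x_n$, combined with integration in $x_n$. Concretely, Young's inequality with exponents chosen so that convolution $L^p(\RR^{n-1})*L^r(\RR^{n-1})\to L^q(\RR^{n-1})$ requires $1+\frac1q=\frac1p+\frac1r$; with the target relation $q=\frac{n}{n-1}p$ one solves for $r$ and obtains $r=\frac{n}{2n-1-\ldots}$, a value in $(1,\infty)$, giving
$$ \|P_af(\cdot,x_n)\|_{L^q(\RR^{n-1})}\leq\|k_{x_n}\|_{L^r(\RR^{n-1})}\,\|f\|_{L^p(\RR^{n-1})}. $$
I would then raise this to the $q$-th power, integrate in $x_n$ over $\RR$, and pull out $\|f\|_p^q$; the remaining factor is $\int_\RR\|k_{x_n}\|_{L^r}^q\,dx_n$, which must be shown finite and evaluated.

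The main computational obstacle will be controlling this last integral $\int_\RR\|k_{x_n}\|_{L^r(\RR^{n-1})}^q\,dx_n$ and checking it reproduces the stated constant $2c_{n,a}^{1/(n-1)}$. By scaling $w=|x_n|r'$ inside $\|k_{x_n}\|_{L^r}$ one extracts a power of $|x_n|$ times an $x_n$-independent Beta-type integral; the delicate point is verifying that the resulting power of $|x_n|$ is exactly $-1$ so that $\int_\RR|x_n|^{-1}\,dx_n$ does not appear (which would diverge) but instead the scaling conspires with the exponent $q$ to yield a convergent integral. I expect the clean way is to interpolate: having established the strong bound at $p=\infty$ and an $L^1$-type endpoint estimate (where $p=1$, $q=\frac{n}{n-1}$ follows from $\|k_{x_n}\|_1=1$ and Minkowski's integral inequality), one applies the Riesz--Thorin interpolation theorem, or more directly Minkowski's integral inequality in the form $\|P_af\|_{L^q(\RR^n)}\leq\big\|\,\|k_{x_n}\|_{\ast}\,\big\|\,\|f\|_p$, to obtain the full range $1<p\leq\infty$ with the explicit constant, the factor $2$ arising from the two half-spaces $\pm x_n$ and the factor $c_{n,a}^{1/(n-1)}$ from the normalization of the kernel. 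The genuinely routine parts are the Beta-integral evaluations via \cite[formula 3.251~(2)]{GR07}; the genuinely delicate part is bookkeeping the exponents so that the homogeneity matches $q=\frac{n}{n-1}p$ and the constant comes out as claimed.
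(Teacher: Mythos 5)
Your endpoint analysis at $p=\infty$ is fine and matches the paper (positivity of the kernel plus $P_a\1_{\RR^{n-1}}=\1_{\RR^n}$ from Lemma~\ref{lem:ExplicitBVConstantReal}), but the strategy for $1<p<\infty$ has a genuine gap at exactly the point you flag as ``delicate''. If you run the slice-wise Young inequality, the scaling $w=|x_n|v$ gives $\|k_{x_n}\|_{L^r(\RR^{n-1})}=C\,|x_n|^{-(n-1)/r'}$ with $1/r'=1/(np)$, so $\|k_{x_n}\|_{L^r}^q=C^q|x_n|^{-q(n-1)/(np)}=C^q|x_n|^{-1}$ precisely because $q=\frac{n}{n-1}p$; the integral $\int_\RR\|k_{x_n}\|_{L^r}^q\,dx_n$ therefore diverges logarithmically, and no bookkeeping rescues it. The same obstruction kills your proposed fallback: the strong-type $(1,\tfrac{n}{n-1})$ bound you want to get from $\|k_{x_n}\|_1=1$ and Minkowski's integral inequality is \emph{false}. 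Minkowski would require $\|K(\cdot,y)\|_{L^{n/(n-1)}(\RR^n)}<\infty$ for the full kernel $K(x,y)=c_{n,a}|x_n|^{1-a}(|x'-y|^2+x_n^2)^{-(n-a)/2}$, and on the region $|x|\sim R$, $|x_n|\sim R$ the integrand of $\|K(\cdot,0)\|_{L^{n/(n-1)}}^{n/(n-1)}$ decays like $R^{-n}$, so the radial integral behaves like $\int R^{-1}\,dR=\infty$. Consequently Riesz--Thorin has no valid lower endpoint to interpolate from.

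The fix is to replace the strong lower endpoint by a weak-type one and use Marcinkiewicz (real) interpolation instead of Riesz--Thorin; this is what the paper does. One bounds the kernel by $|x_n|^{1-a}(x_n^2)^{-(n-a)/2}=|x_n|^{1-n}$ to get the pointwise estimate $|P_af(x)|\leq c_{n,a}|x_n|^{1-n}\|f\|_1$, so that $|P_af(x)|>\lambda$ confines $x$ to the slab $|x_n|<b$ with $b=(c_{n,a}\|f\|_1/\lambda)^{1/(n-1)}$; Chebyshev's inequality on that slab together with $P_a\1=\1$ then yields $m(\{|P_af|>\lambda\})\leq 2c_{n,a}^{1/(n-1)}(\|f\|_1/\lambda)^{n/(n-1)}$, i.e.\ the weak-type $(1,\tfrac{n}{n-1})$ bound, and Marcinkiewicz interpolation with the $L^\infty\to L^\infty$ bound gives the full range $1<p\leq\infty$ with the stated constant. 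Your ingredients ($\|k_{x_n}\|_1=1$, positivity, the scaling in $x_n$) are all the right ones; what is missing is the recognition that the $|x_n|^{-1}$ borderline is the signature of a weak-type, not strong-type, endpoint.
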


\begin{proof}
By the Marcinkiewicz Interpolation Theorem it suffices to show that $P_a$ is a bounded operator
$$ L^\infty(\RR^{n-1})\to L^\infty(\RR^n) \qquad \mbox{and} \qquad L^1(\RR^{n-1})\to L^{\frac{n}{n-1}}_w(\RR^n), $$
where $L^r_w(\RR^n)$ stands for the weak type $L^r$-space. First observe that $P_a\1_{\RR^{n-1}}=\1_{\RR^n}$ by Lemma~\ref{lem:ExplicitBVConstantReal} and that the integral kernel of $P_a$ is a positive function. Thus we have
$$ P_a: L^\infty(\RR^{n-1})\to L^\infty(\RR^n), \qquad \Vert P_af\Vert_{\infty}\le\Vert f\Vert_{\infty}. $$
We now prove the weak type inequality
$$ P_a: L^1(\RR^{n-1})\to L^{\frac{n}{n-1}}_w(\RR^n), \qquad \Vert P_af\Vert_{\frac{n}{n-1},w}\leq (2^{n-1}c_{n,a})^{\frac{1}{n}}\Vert f\Vert_1. $$
First note that
$$ |P_af(z,t)| \leq c_{n,a} \int_{\RR^{n-1}} \frac{|x_n|^{1-a}}{(|x'-y|^2+x_n^2)^{\frac{n-a}{2}}} |f(y)|\,dy \leq c_{n,a} |x_n|^{1-n} \Vert f\Vert_1. $$
Hence $|P_af(z,t)|>\lambda$ implies $|x_n|<(\frac{c_{n,a}\Vert f\Vert_1}{\lambda})^{\frac{1}{n-1}}$. Denote by $m$ the Lebesgue measure on $\RR^n$. Let $b=(\frac{c_{n,a}\Vert f\Vert_1}{\lambda})^{\frac{1}{n-1}}$, then Chebyshev’s Inequality yields
\begin{align*}
 & m(\{x\in\RR^n:|P_af(x)|>\lambda\}) = m(\{x\in\RR^n:|x_n|<b,\,|P_af(x)|>\lambda\})\\
 \leq{}& \frac{1}{\lambda} \int_{x\in\RR^n,\,|x_n|<b} |P_af(x)|\,dx\\
 \leq{}& \frac{c_{n,a}}{\lambda} \int_{y\in\RR^{n-1}} |f(y)| \int_{x\in\RR^n,\,|x_n|<b} \frac{|x_n|^{1-a}}{(|x'-y|^2+x_n^2)^{\frac{n-a}{2}}} \,dx\,dy\\
 ={}& \frac{1}{\lambda}\int_{\RR^{n-1}} |f(y)| \int_{|x_n|<b} P_a\1(y,x_n)\,dx_n\,dy\\
 ={}& \frac{1}{\lambda}\Vert f\Vert_1\int_{|x_n|<b}dx_n = 2c_{n,a}^{\frac{1}{n-1}}\left(\frac{\Vert f\Vert_1}{\lambda}\right)^{\frac{n}{n-1}},
\end{align*}
where we have used $P_a\1=\1$ in the fifth step. Hence
$$ \Vert P_af\Vert_{\frac{n}{n-1},w} \leq (2^{n-1}c_{n,a})^{\frac{1}{n}}\Vert f\Vert_1 $$
and the claim follows.
\end{proof}

\section{Computations in the the complex case}\label{sec:CplxCase}

\subsection{The Casimir operator}\label{sec:CasimirCplx}

An explicit basis of the Lie algebra $\frakg=\fraku(1,n+1)$ is given by the generator $H$ of $\fraka$ and the elements
\begin{align*}
 M_0 &:= i(E_{1,1}+E_{2,2}),\\
 M_j &:= iE_{j,j}, && 1\leq j\leq n,\\
 M_{jk}^{(1)} &:= E_{j+2,k+2}-E_{k+2,j+2}, && 1\leq j<k\leq n,\\
 M_{jk}^{(2)} &:= i(E_{j+2,k+2}+E_{k+2,j+2}), && 1\leq j<k\leq n,\\
 X_j &:= E_{j+2,1}-E_{j+2,2}+E_{1,j+2}+E_{2,j+2}, && 1\leq j\leq n,\\
 Y_j &:= i(E_{j+2,1}-E_{j+2,2}-E_{1,j+2}-E_{2,j+2}), && 1\leq j\leq n,\\
 T &:= -\tfrac{1}{2}i(E_{1,1}-E_{1,2}+E_{2,1}-E_{2,2}),\\
 \overline{X}_j &:= E_{j+2,1}+E_{j+2,2}+E_{1,j+2}-E_{2,j+2}, && 1\leq j\leq n,\\
 \overline{Y}_j &:= i(E_{j+2,1}+E_{j+2,2}-E_{1,j+2}+E_{2,j+2}), && 1\leq j\leq n,\\
 \overline{T} &:= -\tfrac{1}{2}i(E_{1,1}+E_{1,2}-E_{2,1}-E_{2,2}).
\end{align*}

The action of the generators $MA\overline{N}$ and $w_0$ of $G$ on $I_\mu^\infty$ is then given by
\begin{align*}
 \pi_\mu^\infty(\overline{n}_{z',t'})f(z,t) &= f((-z',-t')\cdot(z,t)), && \overline{n}_{z',t'}\in\overline{N},\\
 \pi_\mu^\infty(\diag(\lambda,\lambda,m))f(z,t) &= f(\lambda m^{-1}z,t), && \lambda\in U(1),m\in U(n),\\
 \pi_\mu^\infty(e^{sH})f(z,t) &= e^{(\mu+\rho)s}f(e^sz,e^{2s}t), && e^{sH}\in A,\\
 \pi_\mu^\infty(w_0)f(z,t) &= |(z,t)|^{-2(\mu+\rho)}f\left(-\frac{z}{|z|^2+it},-\frac{t}{|z|^4+t^2}\right).
\end{align*}
This immediately gives the action of the differential representation on $\frakm$, $\fraka$ and $\overline{\frakn}$:
\begin{align*}
 d\pi_\mu^\infty(M_0) &= \sum_{j=1}^n\left(x_j\frac{\partial}{\partial y_j}-y_j\frac{\partial}{\partial x_j}\right),\\
 d\pi_\mu^\infty(M_j) &= y_j\frac{\partial}{\partial x_j}-x_j\frac{\partial}{\partial y_j},\\
 d\pi_\mu^\infty(M_{jk}^{(1)}) &= \left(x_j\frac{\partial}{\partial x_k}-x_k\frac{\partial}{\partial x_j}\right)+\left(y_j\frac{\partial}{\partial y_k}-y_k\frac{\partial}{\partial y_j}\right),\\
 d\pi_\mu^\infty(M_{jk}^{(2)}) &= \left(y_j\frac{\partial}{\partial x_k}-x_k\frac{\partial}{\partial y_j}\right)-\left(x_j\frac{\partial}{\partial y_k}-y_k\frac{\partial}{\partial x_j}\right),\\
 d\pi_\mu^\infty(H) &= \sum_{j=1}^n\Big(x_j\frac{\partial}{\partial x_j}+y_j\frac{\partial}{\partial y_j}\Big)+2t\frac{\partial}{\partial t}+(\mu+\rho),\\
 d\pi_\mu^\infty(\overline{X}_j) &= -\frac{\partial}{\partial x_j}+2y_j\frac{\partial}{\partial t},\\
 d\pi_\mu^\infty(\overline{Y}_j) &= -\frac{\partial}{\partial y_j}-2x_j\frac{\partial}{\partial t},\\
 d\pi_\mu^\infty(\overline{T}) &= -\frac{\partial}{\partial t}.
\end{align*}
Thanks to the relations $\Ad(w_0)\overline{X}_j=-X_j$, $\Ad(w_0)\overline{Y}_j=-Y_j$ and $\Ad(w_0)\overline{T}=T$ we have
\begin{align*}
 d\pi_\mu^\infty(X_j) &= -\pi_\mu(w_0)d\pi_\mu(\overline{X}_j)\pi_\mu(w_0),\\
 d\pi_\mu^\infty(Y_j) &= -\pi_\mu(w_0)d\pi_\mu(\overline{Y}_j)\pi_\mu(w_0),\\
 d\pi_\mu^\infty(T) &= \pi_\mu(w_0)d\pi_\mu(\overline{T})\pi_\mu(w_0),
\end{align*}
which, after an easy though longish calculation, turn out to be equal to
\begin{align*}
 d\pi_\mu^\infty(X_j) ={}& 2\sum_{k=1}^n\Big((x_jx_k-y_jy_k)\frac{\partial}{\partial x_k}+(x_jy_k+y_jx_k)\frac{\partial}{\partial y_k}\Big)\\
 &\hspace{1cm}-\Big(|z|^2\frac{\partial}{\partial x_j}-t\frac{\partial}{\partial y_j}\Big)+2(tx_j-|z|^2y_j)\frac{\partial}{\partial t}+2(\mu+\rho)x_j,\\
 d\pi_\mu^\infty(Y_j) ={}& 2\sum_{k=1}^n\Big((x_jy_k+y_jx_k)\frac{\partial}{\partial x_k}-(x_jx_k-y_jy_k)\frac{\partial}{\partial y_k}\Big)\\
 &\hspace{1cm}-\Big(t\frac{\partial}{\partial x_j}+|z|^2\frac{\partial}{\partial y_j}\Big)+2(|z|^2x_j+ty_j)\frac{\partial}{\partial t}+2(\mu+\rho)y_j,\\
 d\pi_\mu^\infty(T) ={}& -\sum_{k=1}^n\Big((tx_k-|z|^2y_k)\frac{\partial}{\partial x_k}+(|z|^2x_k+ty_k)\frac{\partial}{\partial y_k}\Big)\\
 &\hspace{5.5cm}+(|z|^4-t^2)\frac{\partial}{\partial t}-(\mu+\rho)t.
\end{align*}

Now the Casimir elements $C_1$ and $C_2$ can be expressed using the above constructed basis of $\frakg$:
\begin{align*}
 C_1 ={}& H^2-2nH-\sum_{1\leq j<k\leq n-1}\Big((M_{jk}^{(1)})^2+(M_{jk}^{(2)})^2\Big)-2\sum_{j=1}^{n-1}M_j^2\\
 &\hspace{4cm}-M_0^2+\sum_{j=1}^{n-1}\Big(X_j\overline{X}_j+Y_j\overline{Y}_j\Big)-4T\overline{T},\\
 C_2 ={}& -2M_n^2.
\end{align*}
An elementary calculation using the previously derived formulas for the differential representation shows that $C=C_1-C_2$ acts by
\begin{equation}
 d\pi_\mu^\infty(C) = |z_n|^2\calL+2(\mu+1)\left(x_n\frac{\partial}{\partial x_n}+y_n\frac{\partial}{\partial y_n}\right)+(\mu+\rho)(\mu-\rho+2),\label{eq:FormulaCasimirComplex}
\end{equation}
where
\begin{equation}
 \calL = \sum_{j=1}^n\left(\left(\frac{\partial}{\partial x_j}+2y_j\frac{\partial}{\partial t}\right)^2+\left(\frac{\partial}{\partial y_j}-2x_j\frac{\partial}{\partial t}\right)^2\right)\label{eq:CRLaplacian}
\end{equation}
denotes the left-invariant CR-Laplacian on the Heisenberg group $\overline{N}$.

\subsection{Uniqueness}\label{sec:UniquenessCplx}

For the proof of uniqueness of $z_n$-radial solutions to the Dirichlet problem and for the isometry property of the Poisson transform $P_a$ we use Fourier analysis on the Heisenberg group $H^{2n+1}$ (see Section~\ref{sec:SobolevSpacesCplx} for the notation).

For $\mu\in\RR^\times$ consider the representations $\sigma_\mu$ on the Fock space
$$ \calF_\mu := \left\{\xi\in\calO(\CC^n):\|\xi\|_\mu^2=\int_{\CC^n}|\xi(w)|^2e^{-2|\mu||w|^2}\,dw<\infty\right\}. $$
Splitting variables $w=(w',w_n)$ with $w'=(w_1,\ldots,w_{n-1})\in\CC^{n-1}$ and $w_n\in\CC$ the Fock space $\calF_\mu$ can be written as the Hilbert space tensor product $\calF_\mu'\otimes\calF_\mu''$ where $\calF_\mu'$ and $\calF_\mu''$ are the corresponding Fock spaces on $\CC^{n-1}$ and $\CC$, respectively. Elementary tensors are functions of the form $\xi(w')\eta(w_n)$ with $\xi\in\calF_\mu'$ and $\eta\in\calF_\mu''$. We further let $\calP'$ resp. $\calP''$ be the space of polynomials on $\CC^{n-1}$ resp. $\CC$ and $\calP_k'$ resp. $\calP_k''$ its subspace of homogeneous polynomials of degree $k$.

Now let $u\in\dot{H}^{\frac{2-a}{2}}(H^{2n+1})$ be a solution of the Dirichlet problem
\begin{equation*}
 \calL_a u = 0, \qquad u|_{H^{2n-1}} = 0.\label{eq:HomDirichletProblemCplz}
\end{equation*}
Write $(z,t)=(z',z_n,t)$ with $z'=(z_1,\ldots,z_{n-1})\in\CC^{n-1}$ and assume that $u$ is $z_n$-radial, i.e.
$$ u_m(z',e^{i\theta}z_n,t)=u_m(z',z_n,t). $$
In this case it is easy to see that $\sigma_\mu(u)$ maps $\calF_\mu'\otimes\calP_\ell''$ into $\calF_\mu'\otimes\calP_\ell''$. Fixing $\mu\in\RR^\times$ we can therefore write
$$ \sigma_\mu(u)|_{\calP_k'\otimes\calP_\ell''} = T_{\mu,k,\ell}\otimes\id_{\calP_\ell''} $$
with operators $T_{\mu,k,\ell}:\calP_k'\to\calF_\mu'$. For convenience we also put $T_{\mu,k,-1}:=0$.

We first study how the differential equation $\calL_au=0$ is expressed in terms of $T_{\mu,k,\ell}$.

\begin{lemma}
If $\calL_a u=0$ then for all $\mu\in\RR^\times$ and $k,\ell\in\NN$ we have
\begin{multline}
 (\ell+1)(2(2k+2\ell+n+2)-a)T_{\mu,k,\ell+1}-(2(2\ell+1)(2k+2\ell+n)+a)T_{\mu,k,\ell}\\
 +\ell(2(2k+2\ell+n-2)+a)T_{\mu,k,\ell-1}=0.\label{eq:PDEinCplxFourierPicture}
\end{multline}
\end{lemma}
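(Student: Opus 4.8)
The plan is to apply the group Fourier transform $\sigma_\mu$ to the equation $\calL_a u=0$ and to read off the recurrence from the block decomposition of $\sigma_\mu(u)$. Since $u$ is $z_n$-radial, $\sigma_\mu(u)$ commutes with the $U(1)$-action rotating the variable $w_n$ and therefore preserves each subspace $\calF_\mu'\otimes\calP_\ell''$; this is exactly what produces the operators $T_{\mu,k,\ell}\colon\calP_k'\to\calF_\mu'$. Because $\sigma_\mu$ turns convolution into composition and left-invariant differentiation into right multiplication by $d\sigma_\mu$, the equation $\sigma_\mu(\calL_a u)=0$ becomes, after restriction to $\calP_k'\otimes\calP_\ell''$, a linear relation among the blocks $T_{\mu,k,\ell'}$; the whole point is that this relation turns out to be tridiagonal in $\ell$.

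I would first treat the left-invariant part. The CR-Laplacian satisfies $\sigma_\mu(\calL u)=\sigma_\mu(u)\,d\sigma_\mu(\calL)$, and in the Fock model $d\sigma_\mu(\calL)$ acts on homogeneous polynomials of total degree $m=k+\ell$ by the scalar $-4|\mu|(2m+n)$. This is the origin of the factors $2k+2\ell+n$ (and their shifts by $\pm2$) in the recurrence. I would then compute the Fourier images of the two non-invariant pieces of $\calL_a=|z_n|^2\calL+aE_n$. Using the explicit formula for $\sigma_\mu(z,t)$ and integration by parts one shows that multiplication by $\bar z_n$ (resp.\ $z_n$) transforms into the creation operator $w_n$ (resp.\ the annihilation operator $\partial_{w_n}$) on $\calF_\mu''$, up to a derivative term and a factor of $\mu$, and that the Euler operator $E_n=x_n\partial_{x_n}+y_n\partial_{y_n}$ transforms into a first-order expression in $w_n,\partial_{w_n}$. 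Consequently, on the block-diagonal image of a $z_n$-radial function both $|z_n|^2$ and $E_n$ become Jacobi (tridiagonal) operators in $\ell$: the annihilation part lowers $\ell+1\mapsto\ell$ with weight $\ell+1$, the creation part raises $\ell-1\mapsto\ell$ with weight $\ell$, and the diagonal part carries weight $2\ell+1$.

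Assembling these, the coefficient of $T_{\mu,k,\ell+1}$ arises by applying $\calL$ at degree $k+\ell+1$ — giving $2(2k+2\ell+n+2)$ — followed by the lowering part of $|z_n|^2$ with weight $\ell+1$, together with the $-a$ contribution of $E_n$; the coefficients of $T_{\mu,k,\ell-1}$ and $T_{\mu,k,\ell}$ are obtained in the same way from the raising and diagonal parts. Reading off the three coefficients yields precisely \eqref{eq:PDEinCplxFourierPicture}. A useful internal check is that the recurrence is $\mu$-independent: the factor $|\mu|$ produced by $d\sigma_\mu(\calL)$ must cancel the $\mu^{-1}$ factors produced by the coordinate-multiplication and Euler transforms. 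The computation for $\mu<0$ is identical to that for $\mu>0$ after interchanging the roles of $w_n$ and $\partial_{w_n}$, so it suffices to treat one sign.

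The main obstacle is this middle step: determining the precise tridiagonal realizations of multiplication by $|z_n|^2$ and of $E_n$ on $\calF_\mu''$ with all constants. The derivative terms appearing in the coordinate-multiplication formulas reintroduce factors of $z_n,\bar z_n$ and must be reorganized, using the $z_n$-radiality of $u$ and of $\calL u$, before the creation/annihilation structure becomes visible; keeping track of the normalizations of $w_n$ and $\partial_{w_n}$ relative to the Fock norm is what ultimately fixes the weights $\ell+1$, $2\ell+1$, $\ell$ and the exact $a$- and $n$-dependent coefficients. Everything after that is bookkeeping.
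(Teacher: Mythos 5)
Your proposal follows essentially the same route as the paper: apply $\sigma_\mu$, use $\sigma_\mu(\calL u)=-4|\mu|\,\sigma_\mu(u)\circ(2E+n)$, express $\sigma_\mu(|z_n|^2u)$ and $\sigma_\mu(E_nu)$ as combinations of left and right compositions with $w_n$ and $\partial_{w_n}$ (with the $\mu^{-1}$ factor cancelling the $|\mu|$ from $\calL$), and read off the tridiagonal relation on the blocks $T_{\mu,k,\ell}$ with the weights $\ell+1$, $2\ell+1$, $\ell$ exactly as you describe. The remaining work you defer is indeed the bookkeeping the paper carries out explicitly.
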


\begin{proof}
We have
$$ \sigma_\mu(\calL u) = -4|\mu|\sigma_\mu(u)\circ(2E+n), $$
where $E=\sum_{j=1}^nw_j\frac{\partial}{\partial w_j}$ is the Euler operator acting on $\calP_k$ by the scalar $k$. We further calculate
$$ \sigma_\mu(|z_n|^2u) = \frac{1}{2|\mu|}\left(\sigma_\mu(u)+\sigma_\mu(u)w_n\frac{\partial}{\partial w_n}+w_n\frac{\partial}{\partial w_n}\sigma_\mu(u)-w_n\sigma_\mu(u)\frac{\partial}{\partial w_n}-\frac{\partial}{\partial w_n}\sigma_\mu(u)w_n\right) $$
and
$$ \sigma_\mu\left(\left(x_n\frac{\partial}{\partial x_n}+y_n\frac{\partial}{\partial y_n}\right)u\right) = w_n\sigma_\mu(u)\frac{\partial}{\partial w_n}-\frac{\partial}{\partial w_n}\sigma_\mu(u)w_n-\sigma_\mu(u). $$
This gives
\begin{multline*}
 \sigma_\mu(\calL_au) = -2\Bigg(\sigma_\mu(u)(2E+n)+\sigma_\mu(u)(2E+n)w_n\frac{\partial}{\partial w_n}+w_n\frac{\partial}{\partial w_n}\sigma_\mu(u)(2E+n)\\
 -w_n\sigma_\mu(u)(2E+n)\frac{\partial}{\partial w_n}-\frac{\partial}{\partial w_n}\sigma_\mu(u)(2E+n)w_n\Bigg)\\
 +a\Bigg(w_n\sigma_\mu(u)\frac{\partial}{\partial w_n}-\frac{\partial}{\partial w_n}\sigma_\mu(u)w_n-\sigma_\mu(u)\Bigg).
\end{multline*}
Using $w_n\frac{\partial}{\partial w_n}|_{\calP_\ell''}=\ell$ and $E|_{\calP_k'\otimes\calP_\ell''}=(k+\ell)$ we find
\begin{multline*}
 \sigma_\mu(\calL_au)|_{\calP_k'\otimes\calP_\ell''} = \Bigg((\ell+1)(2(2k+2\ell+n+2)-a)T_{\mu,k,\ell+1}\\
 -(2(2\ell+1)(2k+2\ell+n)+a)T_{\mu,k,\ell}\\
 +\ell(2(2k+2\ell+n-2)+a)T_{\mu,k,\ell-1}\Bigg)\otimes\id_{\calP_\ell''}
\end{multline*}
which implies the claim.
\end{proof}

Next we study what the boundary condition $u|_{H^{2n-1}}=0$ implies for the operators $T_{\mu,k,\ell}$.

\begin{lemma}\label{lem:RestrictionZeroFourierSideCplx}
If $u|_{H^{2n-1}}=0$ then for all $\mu\in\RR^\times$ and $k\in\NN$ we have
$$ \sum_{\ell=0}^\infty T_{\mu,k,\ell} = 0. $$
\end{lemma}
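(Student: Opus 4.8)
The plan is to push the boundary condition $u|_{H^{2n-1}}=0$ through the inversion formula for the group Fourier transform, turning it into a statement about a partial trace over $\calF_\mu''$ which then unwinds directly into the operators $T_{\mu,k,\ell}$.

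First I would record how the Fock representation factors under the splitting $\CC^n=\CC^{n-1}\oplus\CC$. Writing $w=(w',w_n)$, the Gaussian weight factors as $e^{-2|\mu||w|^2}=e^{-2|\mu||w'|^2}e^{-2|\mu||w_n|^2}$, so $\calF_\mu=\calF_\mu'\otimes\calF_\mu''$, and a direct check from the defining formula for $\sigma_\mu$ shows that on the subgroup $H^{2n-1}=\{z_n=0\}$ one has $\sigma_\mu(z',0,t)=\sigma_\mu'(z',t)\otimes\id_{\calF_\mu''}$, where $\sigma_\mu'$ is the Fock representation of $H^{2n-1}$ carrying the full central factor $e^{i\mu t}$. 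The essential content of this step is that the subgroup leaves the $\calF_\mu''$ tensor factor untouched.

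Next I would apply the inversion formula on $H^{2n+1}$ and set $z_n=0$. Using the factorization above, the full trace over $\calF_\mu$ rewrites as a partial trace:
$$ u(z',0,t) = \frac{2^{n-1}}{\pi^{n+1}} \int_\RR \tr_{\calF_\mu'}\!\big(\sigma_\mu'(z',t)^*\,\ptr_{\calF_\mu''}\sigma_\mu(u)\big)|\mu|^n\,d\mu. $$
Comparing this with the inversion formula on $H^{2n-1}$, whose constant is $\tfrac{2^{n-2}}{\pi^n}$ and whose Plancherel weight is $|\mu|^{n-1}$, the injectivity of the group Fourier transform (together with a density argument, since the trace map is continuous for $s>1$) gives
$$ \sigma_\mu'(u|_{H^{2n-1}}) = \tfrac{2|\mu|}{\pi}\,\ptr_{\calF_\mu''}\sigma_\mu(u) \qquad \text{for all } \mu\in\RR^\times. $$
In particular $u|_{H^{2n-1}}=0$ forces $\ptr_{\calF_\mu''}\sigma_\mu(u)=0$ for every $\mu\in\RR^\times$.

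Finally I would evaluate the partial trace in terms of the $T_{\mu,k,\ell}$. Because $u$ is $z_n$-radial, $\sigma_\mu(u)$ preserves each $\calF_\mu'\otimes\calP_\ell''$ and acts there as $T_{\mu,k,\ell}\otimes\id$ on $\calP_k'\otimes\calP_\ell''$; since $\dim\calP_\ell''=1$, tracing out $\calF_\mu''$ simply adds these blocks, so $(\ptr_{\calF_\mu''}\sigma_\mu(u))|_{\calP_k'}=\sum_{\ell=0}^\infty T_{\mu,k,\ell}$. Combined with the previous step this yields $\sum_{\ell=0}^\infty T_{\mu,k,\ell}=0$ for all $\mu\in\RR^\times$ and $k\in\NN$, as claimed. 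I expect the main obstacle to be bookkeeping rather than conceptual: correctly matching the Plancherel constants and weights between the two Heisenberg groups, justifying the partial-trace rewriting of the trace, and controlling convergence of the $\ell$-sum defining the partial trace.
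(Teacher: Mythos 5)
Your proposal is correct and follows essentially the same route as the paper: the crux in both is the identity $\sigma_\mu'(u|_{H^{2n-1}})=\tfrac{2|\mu|}{\pi}\ptr_{\calF_\mu''}(\sigma_\mu(u))$, followed by the observation that the partial trace restricted to $\calP_k'$ is $\sum_{\ell}T_{\mu,k,\ell}$. The only difference is that the paper simply cites this partial-trace formula from \cite[Lemma 4.4]{MOZ14}, whereas you rederive it from the two inversion formulas; your constant-matching $\tfrac{2^{n-1}}{\pi^{n+1}}|\mu|^{n}=\tfrac{2^{n-2}}{\pi^{n}}\cdot\tfrac{2|\mu|}{\pi}\cdot|\mu|^{n-1}$ checks out.
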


\begin{proof}
In \cite[Lemma 4.4]{MOZ14} we showed that the boundary value map $\dot{H}^{s}(H^{2n+1})\to\dot{H}^{s-1}(H^{2n-1})$ is in the Fourier transformed picture given by
$$ \sigma_\mu'(u|_{H^{2n-1}}) = \frac{2|\mu|}{\pi} \ptr(\sigma_\mu(u)), $$
where $\ptr(\sigma_\mu(u))$ is the partial trace of the operator on $\calF_\mu$ with respect to the second factor in the decomposition $\calF_\mu=\calF_\mu'\otimes\calF_\mu''$, i.e.
$$ \ptr(\sigma_\mu(u))|_{\calP_k'} = \sum_{\ell=0}^\infty T_{\mu,k,\ell}. $$
Hence $u|_{H^{2n-1}}=0$ implies that for every $k\in\NN$ we have
\begin{equation*}
 0 = \ptr(\sigma_\mu(u))|_{\calP_k'} = \sum_{\ell=0}^\infty T_{\mu,k,\ell}.\qedhere
\end{equation*}
\end{proof}

Now, since $a<2n+4$ the coefficient of $T_{\mu,k,\ell+1}$ in \eqref{eq:PDEinCplxFourierPicture} never vanishes and $T_{\mu,k,\ell}$ is uniquely determined by $T_{\mu,k,0}$ by the formula
\begin{equation}
 T_{\mu,k,\ell} = \frac{(k+\frac{n}{2}+\frac{a}{4})_\ell}{(k+\frac{n}{2}-\frac{a}{4}+1)_\ell}T_{\mu,k,0}.\label{eq:FormulaTkl}
\end{equation}

Using \eqref{eq:FormulaTkl} this yields
$$ \sum_{\ell=0}^\infty T_{\mu,k,\ell} = \left(\sum_{\ell=0}^\infty \frac{(k+\frac{n}{2}+\frac{a}{4})_\ell}{(k+\frac{n}{2}-\frac{a}{4}+1)_\ell}\right)T_{\mu,k,0} = \frac{a-2n-4k}{2a}T_{\mu,k,0}, $$
where we have used the following identity which holds for $\Re(y-x)>1$:
\begin{equation}
 \sum_{m=0}^\infty\frac{(x)_m}{(y)_m} = {_2F_1}(1,x;y;1) = \frac{\Gamma(y)\Gamma(y-x-1)}{\Gamma(y-x)\Gamma(y-1)} = \frac{y-1}{y-x-1}.\label{eq:IdentitySumPochhammerQuotient}
\end{equation}

Now $u|_{H^{2n-1}}=0$ implies by Lemma~\ref{lem:RestrictionZeroFourierSideCplx} and the previous calculation that $T_{\mu,k,0}=0$ for all $\mu\in\RR^\times$ and $k\in\NN$ (note that $-2n<a<0$ and thus the coefficient $(a-2n-4k)$ is non-zero). In view of formula \eqref{eq:FormulaTkl} this yields $T_{\mu,k,\ell}=0$ for all $k,\ell\in\NN$ and hence $\sigma_\mu(u)=0$ for all $\mu\in\RR^\times$. Therefore $u=0$ by the Fourier inversion formula which shows uniqueness.

\subsection{Isometry}\label{sec:IsometryCplx}

Let $f\in\dot{H}^{-\frac{a}{2}}(H^{2n-1})$ and let $T_{\mu,k}=\sigma_\mu(f)|_{\calP_k}$. In the last section we have seen that $u=P_af$ is given by
$$ \sigma_\mu(u)|_{\calP_k'\otimes\calP_\ell''}=T_{\mu,k,\ell}\otimes\id_{\calP_\ell''} $$
with
$$ T_{\mu,k,\ell} = \frac{(k+\frac{n}{2}+\frac{a}{4})_\ell}{(k+\frac{n}{2}-\frac{a}{4}+1)_\ell}T_{\mu,k,0} = \frac{(k+\frac{n}{2}+\frac{a}{4})_\ell}{(k+\frac{n}{2}-\frac{a}{4}+1)_\ell}\frac{a}{a-2n-4k}\frac{\pi}{|\mu|}T_{\mu,k}. $$
This means that for $m\in\NN$ we have
$$ \|P_m\circ\sigma_\mu(u)\|_{\HS(\calF_\mu)}^2 = \sum_{k=0}^m \|T_{\mu,k,m-k}\|_{\HS(\calF_\mu')}^2 = \frac{\pi^2a^2}{|\mu|^2}\sum_{k=0}^m \frac{(k+\frac{n}{2}+\frac{a}{4})_{m-k}^2}{(4k+2n-a)^2(k+\frac{n}{2}-\frac{a}{4}+1)_{m-k}^2}\|T_{\mu,k}\|_{\HS(\calF_\mu')}^2. $$

Using \eqref{eq:SobolevNormCplx} we calculate
\begin{align*}
 \|u\|_{\frac{2-a}{2}}^2 &= \frac{2^{n-1}a^2}{\pi^{n-1}} \sum_{m=0}^\infty\sum_{k=0}^m \frac{(1+\frac{n}{2}-\frac{a}{4})_m(k+\frac{n}{2}+\frac{a}{4})_{m-k}^2}{(\frac{n}{2}+\frac{a}{4})_m(4k+2n-a)^2(k+\frac{n}{2}-\frac{a}{4}+1)_{m-k}^2} \int_\RR \|T_{\mu,k}\|_{\HS(\calF_\mu')}^2 |\mu|^{n+s-2} \,d\mu\\
 &= \frac{2^{n-1}a^2}{\pi^{n-1}} \sum_{k=0}^\infty \frac{1}{(4k+2n-a)^2} \sum_{m=k}^\infty \frac{(1+\frac{n}{2}-\frac{a}{4})_m(k+\frac{n}{2}+\frac{a}{4})_{m-k}^2}{(\frac{n}{2}+\frac{a}{4})_m(k+\frac{n}{2}-\frac{a}{4}+1)_{m-k}^2} \int_\RR \|T_{\mu,k}\|_{\HS(\calF_\mu')}^2 |\mu|^{n+s-2} \,d\mu\\
 &= \frac{2^{n-1}a^2}{\pi^{n-1}} \sum_{k=0}^\infty \frac{(1+\frac{n}{2}-\frac{a}{4})_k}{(4k+2n-a)^2(\frac{n}{2}+\frac{a}{4})_k} \sum_{m=0}^\infty \frac{(k+\frac{n}{2}+\frac{a}{4})_m}{(k+\frac{n}{2}-\frac{a}{4}+1)_m} \int_\RR \|T_{\mu,k}\|_{\HS(\calF_\mu')}^2 |\mu|^{n+s-2} \,d\mu\\
 &= \frac{2^{n}(-a)}{\pi^{n-1}} \sum_{k=0}^\infty \frac{(k+\frac{n}{2}-\frac{a}{4})(1+\frac{n}{2}-\frac{a}{4})_k}{(4k+2n-a)^2(\frac{n}{2}+\frac{a}{4})_k} \int_\RR \|T_{\mu,k}\|_{\HS(\calF_\mu')}^2 |\mu|^{n+s-2} \,d\mu\\
 &= \frac{2^{n-4}a}{\pi^{n-1}(\frac{a}{4}-\frac{n}{2})} \sum_{k=0}^\infty \frac{(\frac{n}{2}-\frac{a}{4})_k}{(\frac{n}{2}+\frac{a}{4})_k} \int_\RR \|T_{\mu,k}\|_{\HS(\calF_\mu')}^2 |\mu|^{n+s-2} \,d\mu\\
 &= \frac{\pi a}{a-2n}\|f\|_{-\frac{a}{2}}^2
\end{align*}
where we have used $(x)_{m+n}=(x)_m(x+m)_n$ in the third step and \eqref{eq:IdentitySumPochhammerQuotient} in the fourth step.

\subsection{$L^p$-$L^q$ boundedness}\label{sec:LpCplx}

We now show Theorem~\ref{thm:IsometryCplx}~(3).

\begin{proposition}
For any $1<p\leq\infty$ and $q=\frac{n+1}{n}p$ the Poisson transform $P_a$ is a bounded operator
$$ P_a: L^p(H^{2n-1})\to L^q(H^{2n+1}). $$
More precisely,
$$ \Vert P_af\Vert_q \leq (\pi\,c_{n,a}^\frac{1}{n})^{\frac{1}{q}}\Vert f\Vert_p, \qquad \mbox{for all $f\in L^p(H^{2n-1})$.} $$
\end{proposition}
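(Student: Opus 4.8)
The plan is to mirror the proof of the real case (the preceding Proposition in Section~\ref{sec:LpReal}) and reduce the claim to two endpoint estimates via the Marcinkiewicz Interpolation Theorem. First I would record the two endpoints: boundedness $L^\infty(H^{2n-1})\to L^\infty(H^{2n+1})$ with operator norm at most $1$, and a weak-type estimate $L^1(H^{2n-1})\to L^{\frac{n+1}{n}}_w(H^{2n+1})$. The key structural inputs are that the integral kernel of $P_a$ is a positive function and that $P_a\1_{H^{2n-1}}=\1_{H^{2n+1}}$ by Lemma~\ref{lem:ExplicitBVConstantCplx}. Positivity together with $P_a\1=\1$ immediately gives $\|P_af\|_\infty\le\|f\|_\infty$, so the $L^\infty$ endpoint is essentially free.

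For the weak-type $(1,\frac{n+1}{n})$ bound I would argue exactly as in the real case. Starting from the integral formula in Theorem~\ref{thm:IsometryCplx}~(1), pointwise estimation gives
$$ |P_af(z,t)| \leq c_{n,a}\int_{H^{2n-1}} \frac{|z_n|^{-a}}{|(z,t)^{-1}\cdot(z',0,t')|^{2n-a}} |f(z',t')|\,d(z',t') \leq c_{n,a}|z_n|^{-(n+1)}\|f\|_1, $$
where the second inequality comes from bounding the kernel by its value integrated over $H^{2n-1}$; here the homogeneity of the norm function $|(z,t)|=(|z|^4+t^2)^{1/4}$ under Heisenberg dilations produces the exponent $n+1$ (the homogeneous dimension of $H^{2n-1}$ being $2n$, and the extra variable $z_n$ contributing real dimension $2$). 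Consequently $|P_af(z,t)|>\lambda$ forces $|z_n|<b:=\bigl(c_{n,a}\|f\|_1/\lambda\bigr)^{\frac{1}{n+1}}$, confining the superlevel set to a slab of bounded $z_n$-radius.

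Then I would apply Chebyshev's inequality on this slab and interchange the order of integration, using Fubini and the positivity of the kernel together with the crucial identity $P_a\1=\1$ to collapse the inner integral. Explicitly,
\begin{align*}
 & m(\{|P_af|>\lambda\}) \leq \frac{1}{\lambda}\int_{|z_n|<b}|P_af|\,d(z,t)\\
 \leq{}& \frac{c_{n,a}}{\lambda}\int_{H^{2n-1}}|f(z',t')|\int_{|z_n|<b} \frac{|z_n|^{-a}}{|(z,t)^{-1}\cdot(z',0,t')|^{2n-a}}\,d(z,t)\,d(z',t')\\
 ={}& \frac{1}{\lambda}\|f\|_1\int_{|z_n|<b}d z_n = \pi\,c_{n,a}^{\frac{1}{n}}\Bigl(\frac{\|f\|_1}{\lambda}\Bigr)^{\frac{n+1}{n}},
\end{align*}
where the measure of the slab $\{|z_n|<b\}$ in $\CC$ is $\pi b^2$ and $b^2=(c_{n,a}\|f\|_1/\lambda)^{\frac{2}{n+1}}$ gets reorganized into the final exponent. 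This yields $\|P_af\|_{\frac{n+1}{n},w}\leq(\pi\,c_{n,a}^{1/n})^{\frac{n}{n+1}}\|f\|_1$, which is the desired weak $(1,\frac{n+1}{n})$ bound with the right constant. Interpolating the two endpoints then gives the stated $L^p$-$L^q$ inequality for all $1<p\le\infty$ and $q=\frac{n+1}{n}p$, with the constant $(\pi\,c_{n,a}^{1/n})^{1/q}$.

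The main obstacle I anticipate is the bookkeeping of the homogeneity exponent: one must verify that integrating the kernel $|(z,t)^{-1}\cdot(z',0,t')|^{a-2n}$ over the remaining variables indeed produces the power $|z_n|^{a-(n+1)}$, cancelling the prefactor $|z_n|^{-a}$ to leave $|z_n|^{-(n+1)}$, and that the slab measure in the complex variable $z_n$ contributes the factor $\pi$ rather than a factor $2$ as in the real case. This is dictated by the Heisenberg dilation structure $(z,t)\mapsto(rz,r^2t)$ under which $|(z,t)|$ scales by $r$, but it requires care to keep the constants exact so that they assemble into precisely $(\pi\,c_{n,a}^{1/n})^{1/q}$. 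Everything else is a direct transcription of the Euclidean argument.
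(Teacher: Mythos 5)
Your overall strategy is exactly the paper's: Marcinkiewicz interpolation between the $L^\infty\to L^\infty$ endpoint (from positivity of the kernel and $P_a\1_{H^{2n-1}}=\1_{H^{2n+1}}$) and a weak-type $(1,\tfrac{n+1}{n})$ bound obtained via Chebyshev, Fubini, and $P_a\1=\1$. However, there is a concrete error in the pointwise kernel bound, and it is precisely at the step you yourself flagged as the delicate one. Since $(z,t)^{-1}\cdot(z',0,t')$ has $n$-th complex coordinate $-z_n$, one has $|(z,t)^{-1}\cdot(z',0,t')|\geq|z_n|$, so the kernel is bounded by $|z_n|^{-a}\cdot|z_n|^{a-2n}=|z_n|^{-2n}$; the correct estimate is
$$ |P_af(z,t)|\leq c_{n,a}\,|z_n|^{-2n}\,\|f\|_1, $$
not $|z_n|^{-(n+1)}$. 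The exponent is the homogeneous dimension $2n$ of the boundary group $H^{2n-1}$ (this is the usual approximate-identity dichotomy: the kernel has mass $1$ in $(z',t')$ and sup norm $\delta^{-Q}$ with $\delta=|z_n|$), and your parenthetical attempt to justify $n+1$ by adding the real dimension of $z_n$ to something is not the right bookkeeping.

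This is not merely a cosmetic slip: with your value $b=(c_{n,a}\|f\|_1/\lambda)^{\frac{1}{n+1}}$ the final line of your display gives $\pi b^2\cdot\frac{\|f\|_1}{\lambda}=\pi\,c_{n,a}^{\frac{2}{n+1}}\bigl(\frac{\|f\|_1}{\lambda}\bigr)^{\frac{n+3}{n+1}}$, which is not $\pi\,c_{n,a}^{\frac{1}{n}}\bigl(\frac{\|f\|_1}{\lambda}\bigr)^{\frac{n+1}{n}}$; so as written the computation is internally inconsistent and the claimed weak-type exponent does not follow. With the corrected bound one takes $b=(c_{n,a}\|f\|_1/\lambda)^{\frac{1}{2n}}$, the slab $\{|z_n|<b\}\subseteq\CC$ has measure $\pi b^2=\pi(c_{n,a}\|f\|_1/\lambda)^{\frac{1}{n}}$, and the product with $\|f\|_1/\lambda$ gives exactly $\pi\,c_{n,a}^{\frac{1}{n}}(\|f\|_1/\lambda)^{\frac{n+1}{n}}$, hence $\|P_af\|_{\frac{n+1}{n},w}\leq(\pi^n c_{n,a})^{\frac{1}{n+1}}\|f\|_1$. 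The remainder of your argument then goes through verbatim and coincides with the paper's proof.
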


\begin{proof}
By the Marcinkiewicz Interpolation Theorem it suffices to show that $P_a$ is a bounded operator
$$ L^\infty(H^{2n-1})\to L^\infty(H^{2n+1}) \qquad \mbox{and} \qquad L^1(H^{2n-1})\to L^{\frac{n+1}n}_w(H^{2n+1}), $$
where $L^r_w(H^{2n+1})$ stands for the weak type $L^r$-space. First observe that $P_a\1_{H^{2n-1}}=\1_{H^{2n+1}}$ by Lemma~\ref{lem:ExplicitBVConstantCplx} and that the integral kernel of $P_a$ is a positive function. Thus we have
$$ P_a: L^\infty(H^{2n-1})\to L^\infty(H^{2n+1}), \qquad \Vert P_af\Vert_{\infty}\le\Vert f\Vert_{\infty}. $$
We now prove the weak type inequality
$$ P_a: L^1(H^{2n-1})\to L^{\frac{n+1}n}_w(H^{2n+1}), \qquad \Vert P_af\Vert_{\frac{n+1}{n},w}\leq(\pi^nc_{n,a})^{\frac{1}{n+1}}\Vert f\Vert_1. $$
First note that
$$ |P_af(z,t)| \leq c_{n,a} \int_{H^{2n-1}} \frac{|z_n|^{-a}}{|(z,t)^{-1}\cdot(z',0,t')|^{2n-a}} |f(z',t')|\,d(z',t') \leq c_{n,a} |z_n|^{-2n} \Vert f\Vert_1. $$
Hence $|P_af(z,t)|>\lambda$ implies $|z_n|<(\frac{c_{n,a}\Vert f\Vert_1}{\lambda})^{\frac{1}{2n}}$. Denote by $m$ the Lebesgue measure on $H^{2n+1}$. Let $b=(\frac{c_{n,a}\Vert f\Vert_1}{\lambda})^{\frac{1}{2n}}$, then Chebyshev’s Inequality yields
\begin{align*}
 & m(\{(z,t)\in H^{2n+1}:|P_af(z,t)|>\lambda\}) = m(\{(z,t)\in H^{2n+1}:|z_n|<b,\,|P_af(z,t)|>\lambda\})\\
 \leq{}& \frac{1}{\lambda} \int_{(z,t)\in H^{2n+1},\,|z_n|<b} |P_af(z,t)|\,d(z,t)\\
 \leq{}& \frac{c_{n,a}}{\lambda} \int_{(z',t')\in H^{2n-1}} |f(z',t')| \int_{(z,t)\in H^{2n+1},\,|z_n|<b} \frac{|z_n|^{-a}}{|(z,t)^{-1}\cdot(z',0,t')|^{2n-a}} \,d(z,t)\,d(z',t')\\
 ={}& \frac{1}{\lambda}\int_{H^{2n-1}} |f(z',t')| \int_{|z_n|<b} P_a\1(z',z_n,t')\,dz_n\,d(z',t')\\
 ={}& \frac{1}{\lambda}\Vert f\Vert_1\int_{|z_n|<b}dz_n = \pi c_{n,a}^{\frac{1}{n}}\left(\frac{\Vert f\Vert_1}{\lambda}\right)^{\frac{n+1}{n}},
\end{align*}
where we have used $P_a\1=\1$ in the fifth step. Hence
$$ \Vert P_af\Vert_{\frac{n+1}{n},w} \leq (\pi^nc_{n,a})^{\frac{1}{n+1}}\Vert f\Vert_1 $$
and the claim follows.
\end{proof}

\appendix

\section{The full spectral decomposition in the real case}\label{sec:AppendixReal}

We describe the complete spectral decomposition of the operator $\Delta_a$ on $\dot{H}^{\frac{2-a}{2}}(\RR^n)$ for $2-n<a\leq2$.

\subsection{Inversion and Plancherel formula}

Recall the classical Gegenbauer polynomials $C_n^\alpha(z)$ given by
$$ C_n^\alpha(z) = \sum_{k=0}^{\lfloor\frac{n}{2}\rfloor} \frac{(-1)^k(\lambda)_{n-k}(2z)^{n-2k}}{k!(n-2k)!}. $$
We inflate the Gegenbauer polynomials to two-variable polynomials $C_n^\alpha(x,y)$ by setting
$$ C_n^\alpha(x,y) := x^{\frac{n}{2}}C_n^\alpha\left(\frac{y}{\sqrt{x}}\right). $$
For $2-n<a\leq2$ and $0\leq j<\frac{1-a}{2}$ we put
\begin{align*}
 D_{a,j}u(y) :={}& \frac{j!}{2^j(\frac{a-1}{2})_j}C_j^{\frac{a-1}{2}}\left(-\Delta_{x'},\frac{\partial}{\partial x_n}\right)u(y,0),\\
 P_{a,j}f(x) :={}& \frac{\Gamma(\frac{n-a}{2}-j)}{j!\pi^{\frac{n-1}{2}}\Gamma(\frac{1-a}{2}-j)} \int_{\RR^{n-1}} \frac{x_n^j|x_n|^{1-a-2j}}{(|x'-y|^2+x_n^2)^{\frac{n-a}{2}-j}} f(y) \,dy.
\end{align*}
Further, for $\nu\in\RR$ and $\varepsilon\in\ZZ/2\ZZ$ let
\begin{align*}
 D_{a,\nu,\varepsilon}u(y) :={}& \int_{\RR^n} \frac{\sgn(x_n)^\varepsilon |x_n|^{\frac{a-3}{2}+i\nu}}{(|x'-y|^2+x_n^2)^{\frac{n-1}{2}+i\nu}} u(x) \,dx,\\
 P_{a,\nu,\varepsilon}f(x) :={}& \int_{\RR^{n-1}} \frac{\sgn(x_n)^\varepsilon |x_n|^{\frac{1-a}{2}-i\nu}}{(|x'-y|^2+x_n^2)^{\frac{n-1}{2}-i\nu}} f(y) \,dy.
\end{align*}

It is easy to see that the operators $P_{a,j}$ and $P_{a,\nu,\varepsilon}$ produce eigenfunctions of $\Delta_a$:
\begin{align*}
 \Delta_a(P_{a,j}f) &= j(j+a-1)(P_{a,j}f),\\
 \Delta_a(P_{a,\nu,\varepsilon}f) &= -\left(\tfrac{1-a}{2}+i\nu\right)\left(\tfrac{1-a}{2}-i\nu\right)(P_{a,\nu,\varepsilon}f).
\end{align*}

\begin{theorem}\label{thm:FullDecompositionReal}
For $2-n<a\leq2$ we have
$$ u(x) = \sum_{j\in[0,\frac{1-a}{2})\cap\ZZ} P_{a,j}D_{a,j}u(x) + \frac{1}{4\pi^n}\sum_{\varepsilon=0,1} \int_0^\infty P_{a,\nu,\varepsilon}D_{a,\nu,\varepsilon}u(x) \left|\frac{\Gamma(\frac{n-1}{2}+i\nu)}{\Gamma(i\nu)}\right|^2\,d\nu $$
and
\begin{multline*}
 \|u\|_{\dot{H}^{\frac{2-a}{2}}(\RR^n)}^2 = \sum_{j\in[0,\frac{1-a}{2})\cap\ZZ} \frac{2^{a+2j}\pi\Gamma(2-a-j)}{j!(\frac{1-a}{2}-j)\Gamma(\frac{1-a}{2}-j)^2}\|D_{a,j}u\|_{\dot{H}^{\frac{1-a}{2}-j}(\RR^{n-1})}^2\\
 + \frac{1}{2^a\pi^n}\sum_{\varepsilon=0,1} \int_0^\infty \|D_{a,\nu,\varepsilon}u\|_{L^2(\RR^{n-1})}^2 \left|\frac{\Gamma(\frac{3-a+2\varepsilon+2i\nu}{4})\Gamma(\frac{n-1}{2}+i\nu)}{\Gamma(\frac{-1+a+2\varepsilon+2i\nu}{4})\Gamma(i\nu)}\right|^2\,d\nu.
\end{multline*}
\end{theorem}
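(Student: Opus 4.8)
The plan is to conjugate $\Delta_a$ by the Euclidean Fourier transform and reduce the full spectral decomposition to that of a single ordinary differential operator, reusing the setup of Sections~\ref{sec:UniquenessReal} and \ref{sec:IsometryReal}. Under $\calF_{\RR^n}$ the space $\dot{H}^{\frac{2-a}{2}}(\RR^n)$ becomes $L^2(\RR^n,|\xi|^{2-a}\,d\xi)$, and writing $z=\xi_n/|\xi'|$, $v(\xi',z)=\widehat{u}(\xi',|\xi'|z)$ as in \eqref{eq:DefinitionFctV}, the operator $\Delta_a$ acts through the $\xi'$-independent hypergeometric operator $\calD_{a,z}$. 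Moreover the Sobolev norm factors as
$$ \|u\|_{\dot{H}^{\frac{2-a}{2}}(\RR^n)}^2 = \int_{\RR^{n-1}} |\xi'|^{3-a}\left(\int_{-\infty}^\infty |v(\xi',z)|^2(1+z^2)^{\frac{2-a}{2}}\,dz\right)d\xi'. $$
Thus $\dot{H}^{\frac{2-a}{2}}(\RR^n)$ is realized as a direct integral over $\xi'$ of copies of the fixed Hilbert space $\calH:=L^2(\RR,(1+z^2)^{\frac{2-a}{2}}\,dz)$, on which $\Delta_a$ acts as the single self-adjoint operator $\calD_{a,z}$, so the whole problem reduces to diagonalizing $\calD_{a,z}$ on $\calH$.

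Next I would carry out the spectral decomposition of the singular Sturm--Liouville operator $\calD_{a,z}$ on $\calH$. The equation $\calD_{a,z}\phi=\lambda\phi$ is of hypergeometric type, so for the point spectrum one looks for the solutions square-integrable against $(1+z^2)^{\frac{2-a}{2}}$: these are the inflated Gegenbauer functions with eigenvalue $\lambda=j(j+a-1)$, square-integrable exactly when $j<\frac{1-a}{2}$ (for $j=0$ this recovers $\phi_1(z)=(1+z^2)^{\frac{a-2}{2}}$ of \eqref{eq:SolutionRealFTside1}, the companion solution $\phi_2$ failing integrability). For the continuous spectrum one sets $\lambda=-\big((\tfrac{1-a}{2})^2+\nu^2\big)$ with $\nu\geq0$, decomposes $\calH$ into even and odd parts (yielding the parity index $\varepsilon\in\ZZ/2\ZZ$), and performs a Weyl--Titchmarsh/Jacobi-transform analysis: the asymptotics of the two hypergeometric solutions as $z\to\pm\infty$ determine the scattering coefficients, and hence the continuous Plancherel density together with the generalized eigenfunctions.

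With the one-dimensional expansion of the identity on $\calH$ established, I would integrate back over $\xi'$ and invert the Fourier transform. Each eigenfunction in the $z$-variable carries a definite power of $|\xi'|$, which fixes the homogeneous Sobolev degree of the corresponding boundary space: $\dot{H}^{\frac{1-a}{2}-j}(\RR^{n-1})$ for the discrete modes and plain $L^2(\RR^{n-1})$ for the continuous ones. Undoing the Fourier transform of multiplication by $\phi_j$ (resp. the continuous eigenfunction indexed by $\nu,\varepsilon$) via the same convolution computation that produced \eqref{eq:PoissonTrafoReal} then yields the Riesz-potential kernels defining $P_{a,j}$ and $P_{a,\nu,\varepsilon}$; the boundary operators $D_{a,j}$, $D_{a,\nu,\varepsilon}$ are their adjoints, and the scalars multiplying $\|D_{a,j}u\|^2$ and $\|D_{a,\nu,\varepsilon}u\|^2$ are the reciprocals of the spectral weights from the ODE step, reassembled through the duplication formula for $\Gamma$.

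The hard part is the explicit continuous Plancherel density, i.e. the factor $\frac{1}{2^a\pi^n}\left|\frac{\Gamma(\frac{3-a+2\varepsilon+2i\nu}{4})\Gamma(\frac{n-1}{2}+i\nu)}{\Gamma(\frac{-1+a+2\varepsilon+2i\nu}{4})\Gamma(i\nu)}\right|^2$; computing it amounts to a $c$-function calculation, extracting the scattering coefficients from the $z\to\infty$ (equivalently $x_n\to0$) asymptotics of the hypergeometric eigenfunctions and matching $\Gamma$-factors. The appearance of $|\Gamma(\frac{n-1}{2}+i\nu)/\Gamma(i\nu)|^2$ is no accident: it is precisely the Plancherel density of the spherical principal series of $G'=O(1,n)$, reflecting that the entire statement is the branching decomposition of $\pi_\mu|_{G'}$ for $\mu=\frac{a-2}{2}$, with $\Delta_a$ a shift of the Casimir of $G'$ by \eqref{eq:FormulaCasimirReal}; this is carried out in detail in \cite{MO12}. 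The representation-theoretic identification of the discrete summands as complementary series of $G'$ (Theorem~\ref{thm:PoissonTransformsAsSymBreakingOps}) gives an independent check on the discrete constants $\frac{2^{a+2j}\pi\Gamma(2-a-j)}{j!(\frac{1-a}{2}-j)\Gamma(\frac{1-a}{2}-j)^2}$.
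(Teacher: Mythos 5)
Your proposal follows essentially the same route as the paper: reduce $\Delta_a$ via the Euclidean Fourier transform and the substitution $z=\xi_n/|\xi'|$ to the single hypergeometric Sturm--Liouville operator $\calD_{a,z}$ on $L^2(\RR,(1+z^2)^{\frac{2-a}{2}}dz)$, diagonalize it by the Kodaira--Titchmarsh (Weyl--Titchmarsh) formula with discrete Gegenbauer modes and a parity-split continuous spectrum, and then undo the Fourier transform to recover the kernels of $P_{a,j}$, $P_{a,\nu,\varepsilon}$ and the stated Plancherel densities. The paper likewise outsources the explicit one-dimensional spectral measure to \cite[Theorems 4.1 and Section 5]{MO12} and only performs the translation back, so your plan and its identification of the $c$-function computation as the hard step match the paper's proof.
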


The proof of this theorem is outlined in the next subsection.

\begin{corollary}\label{cor:MixedBVPReal}
Let $j\in\NN$ and $2-n<a<1-2j$.
\begin{enumerate}
\item For $f\in\dot{H}^{\frac{1-a}{2}-j}(\RR^{n-1})$ the mixed boundary value problem
\begin{equation}
 \Delta_au = j(j+a-1)u, \qquad D_{a,j}u = f\label{eq:MixedBVPReal}
\end{equation}
has a unique solution $u=P_{a,j}f\in\dot{H}^{\frac{2-a}{2}}(\RR^n)$ where $P_{a,j}:\dot{H}^{\frac{1-a}{2}-j}(\RR^{n-1})\to\dot{H}^{\frac{2-a}{2}}(\RR^n)$ is the integral operator
$$ P_{a,j}f(x) = \frac{\Gamma(\frac{n-a}{2}-j)}{j!\pi^{\frac{n-1}{2}}\Gamma(\frac{1-a}{2}-j)}\int_{\RR^{n-1}}\frac{x_n^j|x_n|^{1-a-2j}}{(|x'-y|^2+x_n^2)^{\frac{n-a}{2}-j}}f(y)\,dy. $$
\item The operator $P_{a,j}$ is isometric (up to a constant), more precisely
$$ \|P_{a,j}f\|_{\dot{H}^{\frac{2-a}{2}}(\RR^n)}^2 = \frac{2^{a+2j}\pi\Gamma(2-a-j)}{j!(\frac{1-a}{2}-j)\Gamma(\frac{1-a}{2}-j)^2}\|f\|_{\dot{H}^{\frac{1-a}{2}-j}(\RR^{n-1})}^2. $$
\end{enumerate}
\end{corollary}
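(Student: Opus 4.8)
The plan is to deduce the corollary from the full spectral decomposition of Theorem~\ref{thm:FullDecompositionReal}, specialised to the single discrete eigenvalue $j(j+a-1)$; the hypothesis $2-n<a<1-2j$ is exactly what guarantees $0\le j<\frac{1-a}{2}$, so that $j$ is an admissible index and $\dot H^{\frac{1-a}{2}-j}(\RR^{n-1})$, $\dot H^{\frac{2-a}{2}}(\RR^n)$ are the relevant spaces. First I would note that $u=P_{a,j}f$ already solves the differential equation, since $\Delta_a(P_{a,j}f)=j(j+a-1)P_{a,j}f$ is the eigenvalue identity recorded just before the theorem. It then remains to verify the boundary condition $D_{a,j}(P_{a,j}f)=f$, the isometry, and uniqueness. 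I would reduce the boundary condition and the isometry to the \emph{biorthogonality relations}
\[ D_{a,k}P_{a,j}=\delta_{jk}\id, \qquad D_{a,\nu,\varepsilon}P_{a,j}=0 \qquad (k\in\NN,\ \nu>0,\ \varepsilon\in\ZZ/2\ZZ), \]
which I expect to be the heart of the matter.

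To obtain these I would read Theorem~\ref{thm:FullDecompositionReal} as the assertion that the transform $\calD\colon u\mapsto\big((D_{a,k}u)_k,(D_{a,\nu,\varepsilon}u)_{\nu,\varepsilon}\big)$ is a unitary isomorphism of $\dot H^{\frac{2-a}{2}}(\RR^n)$ onto the spectral side (the orthogonal sum of the discrete boundary spaces and the continuous direct integral, weighted as in the Plancherel formula), with inverse the reconstruction map $\calP$ built from the operators $P_{a,k}$ and $P_{a,\nu,\varepsilon}$. The inversion formula is precisely $\calP\calD=\id$ and the Plancherel formula says $\calD$ is isometric; combined with completeness of the decomposition — the spectrum of $\Delta_a$ is exhausted by exactly these pieces, by Theorem~\ref{thm:DirichletReal}~(1) — this makes $\calD$ unitary, hence also $\calD\calP=\id$. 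Reading $\calD\calP=\id$ off the $j$-th discrete slot yields the biorthogonality relations above. The one genuinely non-formal input here is the orthogonality of the spectral subspaces, which holds because $\Delta_a$ is self-adjoint (Theorem~\ref{thm:DirichletReal}~(1)) and the eigenvalues are pairwise distinct: the discrete values $k(k+a-1)$ are strictly decreasing for $0\le k<\frac{1-a}{2}$ and all exceed $-(\frac{1-a}{2})^2$, whereas the continuous spectrum lies in $(-\infty,-(\frac{1-a}{2})^2)$. This is where I expect the real work to sit, and it is the analytic content already carried by Theorem~\ref{thm:FullDecompositionReal}.

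With biorthogonality available the remaining assertions are immediate. The boundary condition is $D_{a,j}(P_{a,j}f)=f$ directly. For part~(2) I would apply the Plancherel identity to $u=P_{a,j}f$: biorthogonality annihilates every term except the $j$-th discrete one, in which $D_{a,j}u=f$, leaving exactly
\[ \|P_{a,j}f\|_{\dot H^{\frac{2-a}{2}}(\RR^n)}^2 = \frac{2^{a+2j}\pi\Gamma(2-a-j)}{j!(\frac{1-a}{2}-j)\Gamma(\frac{1-a}{2}-j)^2}\|f\|_{\dot H^{\frac{1-a}{2}-j}(\RR^{n-1})}^2. \]
For uniqueness — which, notably, needs only the inversion formula together with the orthogonality of the eigenspaces, and not the full biorthogonality — let $u$ solve the homogeneous problem $\Delta_au=j(j+a-1)u$, $D_{a,j}u=0$. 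Then $u$ is an eigenfunction lying in the $j$-th spectral subspace, so its inversion-formula expansion has all components outside that subspace equal to zero, whence $u=P_{a,j}D_{a,j}u=P_{a,j}0=0$; uniqueness for general $f$ follows by linearity. Thus the only substantive obstacle is the completeness/unitarity of $\calD$ underlying the biorthogonality, and everything else is bookkeeping on top of Theorem~\ref{thm:FullDecompositionReal}.
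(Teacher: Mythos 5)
Your proposal is correct and takes essentially the same route as the paper: Corollary~\ref{cor:MixedBVPReal} is presented there as a direct consequence of the inversion and Plancherel formulas of Theorem~\ref{thm:FullDecompositionReal}, which is precisely the biorthogonality/orthogonal-projection reading you spell out. The extra details you supply (self-adjointness of $\Delta_a$ plus pairwise distinctness of the eigenvalues $k(k+a-1)$ forcing orthogonality of the spectral pieces, hence $\calD\calP=\id$ and uniqueness) are exactly the steps the paper leaves implicit.
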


\subsection{Reduction to an ordinary differential operator}

In Section~\ref{sec:UniquenessReal} we showed that the spectral decomposition of $\Delta_a$ on $\dot{H}^{\frac{2-a}{2}}(\RR^n)$ is via the Euclidean Fourier transform equivalent to the spectral decomposition of the self-adjoint ordinary differential operator
$$ \calD_{a,z}=(1+z^2)\frac{d^2}{dz^2}-(a-4)z\frac{d}{dz}-(a-2) \qquad \mbox{on} \qquad L^2(\RR,(1+z^2)^{\frac{2-a}{2}}dz). $$
This decomposition is calculated explicitly in \cite{MO12} using the Kodaira--Titchmarsh formula. We show how the results in \cite{MO12} translate to Theorem~\ref{thm:FullDecompositionReal}.

For $\sigma=a-2\in(-n,0)$ and $k=0,1$ let $\mu=1+2k$ and
$$ T(\sigma,k) := i\RR_+ \cup \{\sigma+\mu+4j:j\in[0,-\tfrac{\sigma+\mu}{4})\cap\ZZ\}. $$
For $\tau\in T(\sigma,k)$ and $u\in C_c^\infty(\RR^n)$ define
$$ F(\xi',\tau,k) = \hat{U}(\xi',\tau,k) := |\xi'|^{-\frac{\sigma-\tau+\mu}{2}} \int_\RR {_2F_1}\left(\frac{\sigma+\mu+\tau}{4},\frac{\sigma+\mu-\tau}{4};\frac{\mu}{2};-\frac{\xi_n^2}{|\xi'|^2}\right)\xi_n^kU(\xi',\xi_n) \,d\xi_n $$
and for $F\in C_c^\infty(\RR^m\times T(\sigma,k))$ let
$$ \check{F}(\xi,\tau,k) := \xi_n^k |\xi'|^{\frac{\sigma-\tau-\mu}{2}}{_2F_1}\left(\frac{\mu-\sigma+\tau}{4},\frac{\mu-\sigma-\tau}{4};\frac{\mu}{2};-\frac{\xi_n^2}{|\xi'|^2}\right)F(\xi',\tau,k). $$
Then by \cite[Theorem 4.1]{MO12} we have the decomposition of $U\in L^2(\RR^n,|\xi|^{-\sigma}d\xi)$ into eigenfunctions of $\calD_{\sigma+2,z}$:
$$ U(\xi) = \sum_{k=0,1} \int_{T(\sigma,k)} \check{F}(\xi,\tau,k) \,dm_{\sigma,k}(\tau)   $$
and the Plancherel formula
$$ \|U\|_{L^2(\RR^n,|\xi|^{-\sigma}dx)}^2 = \sum_{k=0,1} \int_{T(\sigma,k)} \|\hat{U}(\blank,\tau,k)\|_{L^2(\RR^{n-1},|\xi'|^{-\Re\tau}dx)}^2 \,dm_{\sigma,k}(\tau), $$
where
\begin{multline*}
 \int_{T(\sigma,k)} g(\tau) \,dm_{\sigma,k}(\tau) = \frac{1}{8\pi}\int_{i\RR_+} g(\tau)\left|\frac{\Gamma(\frac{\sigma+\mu+\tau}{4})\Gamma(\frac{-\sigma+\mu+\tau}{4})}{\Gamma(\frac{\tau}{2})\Gamma(\frac{\mu}{2})}\right|^2 \,d\tau\\
 +\sum_{j\in[0,-\frac{\sigma+\mu}{4})\cap\ZZ} \frac{(-1)^j\Gamma(-\frac{\sigma+2j}{2})\Gamma(\frac{\sigma+\mu}{2}+j)\Gamma(\frac{\mu}{2}+j)}{j!\Gamma(\frac{\mu}{2})^2\Gamma(-\frac{\sigma+\mu}{2}-2j)\Gamma(\frac{\sigma+\mu}{2}+2j)}g(\sigma+\mu+4j).
\end{multline*}

By the calculations in \cite[Section 5]{MO12} we obtain for $\tau\in i\RR_+$:
$$ \calF_{\RR^{n-1}}\hat{U}(y,\tau,k) = \frac{2^{\frac{\tau-\sigma}{2}}i^k\Gamma(\frac{\mu}{2})\Gamma(\frac{\tau+n-1}{2})}{\pi^{\frac{n-1}{2}}\Gamma(\frac{\sigma+\mu+\tau}{4})\Gamma(\frac{\sigma+\mu-\tau}{4})} \int_{\RR^n} \frac{x_n^k|x_n|^{\frac{\sigma+\tau-\mu}{2}}}{(|x-x'|^2+x_n^2)^{\frac{\tau+n-1}{2}}} \calF_{\RR^n}U(x) \,dx $$
and for $\tau=\sigma+\mu+4j$:
$$ \calF_{\RR^{n-1}}\hat{U}(y,\tau,k) = \frac{i^{-k}j!\sqrt{2\pi}}{(\frac{1-\sigma-2j}{2})_j(-1-\sigma-2j)_k} C_{2j+k}^{\frac{\sigma+1}{2}}\left(-\Delta_{x'},\frac{\partial}{\partial x_n}\right)\calF_{\RR^n}U(y,0) $$
and
$$ \calF_{\RR^n}\check{F}(x,\tau,k) = \frac{2^{\frac{\sigma-\tau}{2}}i^{-k}\Gamma(\frac{\mu}{2})\Gamma(\frac{n-1-\tau}{2})}{\pi^{\frac{n-1}{2}}\Gamma(\frac{\mu-\sigma+\tau}{4})\Gamma(\frac{\mu-\sigma-\tau}{4})} \int_{\RR^{n-1}} \frac{x_n^k|x_n|^{-\frac{\sigma+\tau+\mu}{2}}}{(|x'-y|^2+x_n^2)^{\frac{n-1-\tau}{2}}} \calF_{\RR^{n-1}}F(y,\tau,k) \,dy. $$

Then a short calculation yields that for $u\in\dot{H}^{-\frac{\sigma}{2}}(\RR^n)$:
\begin{multline*}
 u(x) = \sum_{j\in[0,-\frac{\sigma+1}{2})\cap\ZZ}(-1)^j
\frac{\Gamma(\frac{1-\sigma}{2}-j)\Gamma(\frac{n-\sigma-2}{2}-j)}{2^j\pi^{\frac{n-1}{2}}\Gamma(-\frac{\sigma+1}{2}-j)\Gamma(\frac{1-\sigma}{2})}
\\
\quad \times
\int_{\RR^{n-1}} \frac{x_n^j|x_n|^{-\sigma-1-2j}}{(|x'-y|^2+x_n^2)^{\frac{n-\sigma-2}{2}-j}} C_j^{\frac{\sigma+1}{2}}(-\Delta_{x'},\frac{\partial}{\partial x_n})u(y,0) \,dy\\
 +\sum_{\varepsilon=0,1}\frac{1}{8\pi^n}\int_{i\RR_+}\int_{\RR^{n-1}}\frac{\sgn(x_n)^\varepsilon|x_n|^{-\frac{\sigma+\tau+1}{2}}}{(|x'-y|^2+x_n^2)^{\frac{n-1-\tau}{2}}}\int_{\RR^n}\frac{\sgn(z_n)^\varepsilon|z_n|^{\frac{\sigma+\tau-1}{2}}}{(|z'-y|^2+z_n^2)^{\frac{n-1+\tau}{2}}}u(z)\,dz\,dy\left|\frac{\Gamma(\frac{n-1+\tau}{2})}{\Gamma(\frac{\tau}{2})}\right|^2\,d\tau
\end{multline*}
and
\begin{multline*}
 \|u\|_{\dot{H}^{-\frac{\sigma}{2}}(\RR^n)}^2 = \sum_{j\in[0,-\frac{\sigma+1}{2})\cap\ZZ} \frac{2^{\sigma+1}\pi j!(-\sigma-1-2j)\Gamma(-\sigma-j)}{\Gamma(\frac{1-\sigma}{2})^2}\|C_j^{\frac{\sigma+1}{2}}(-\Delta_{x'},\frac{\partial}{\partial x_n})u\|_{\dot{H}^{-\frac{\sigma+2j+1}{2}}(\RR^{n-1})}^2\\
 +\frac{1}{2^{\sigma+3}\pi^n}\sum_{\varepsilon=0,1} \int_{i\RR_+} \|D_{\sigma,\tau,\varepsilon}u\|_{L^2(\RR^{n-1}}^2\left|\frac{\Gamma(\frac{-\sigma+\tau+2\varepsilon+1}{4})\Gamma(\frac{n-1+\tau}{2})}{\Gamma(\frac{\sigma+\tau+2\varepsilon+1}{4})\Gamma(\frac{\tau}{2})}\right|^2\,d\tau
\end{multline*}
This yields Theorem~\ref{thm:FullDecompositionReal}.

\section{Part of the discrete spectrum in the complex case}\label{sec:AppendixCplx}

We construct explicitly eigenfunctions of $\calL_a$ in $\dot{H}^{\frac{2-a}{2}}(H^{2n+1})$ to the eigenvalues $2k(2k+a)$, $0\leq2k<-\frac{a}{2}$, and show that they are solutions to certain mixed boundary value problems.

\subsection{Mixed boundary value problems and their Poisson transforms}

Recall from \eqref{eq:CRLaplacian} the left-invariant CR-Laplacian on the Heisenberg group $H^{2n+1}$ and denote by $\calL'$ the left-invariant CR-Laplacian of the subgroup $H^{2n-1}\subseteq H^{2n+1}$ given by
$$ \calL' = \sum_{j=1}^{n-1} \left(\left(\frac{\partial}{\partial x_j}+2y_j\frac{\partial}{\partial t}\right)^2+\left(\frac{\partial}{\partial y_j}-2x_j\frac{\partial}{\partial t}\right)^2\right). $$
Further, write $\calT$ for the Laplacian on the center of $H^{2n+1}$, i.e.
$$ \calT = \frac{\partial^2}{\partial t^2}. $$

Following \cite{MOZ14} we inductively define a sequence $(\DD_{s,k})_k$ of differential operators on $H^{2n+1}$ depending on a parameter $s\in\CC$ by
$$ \DD_{s,0} := 1, \qquad \DD_{s,1} := \frac{1}{16s^2(2s+n)}\left[(2s+n-1)\calL-(2s+n)\calL'\right] $$
and
\begin{multline*}
 \DD_{s,k+1} := \frac{1}{16(s-k)^2(2s+n)}\Bigg[\Big((2s+n-2k-1)\calL-(2s+n)\calL'\Big)\DD_{s,k}\\
 -\frac{k^2(2s+n-2k-1)}{16s^2(2s+n-1)(2s+n)}\Big(\calL^2+16(2s+n)^2\calT\Big)\DD_{s-1,k-1}\Bigg].
\end{multline*}

Now for $-2n<a\leq2$ and $0\leq2k<-\frac{a}{2}$ we define a differential restriction operator $D_{a,k}:C^\infty(H^{2n+1})\to C^\infty(H^{2n-1})$ by
$$ D_{a,k}u(z',t') := (\DD_{-\frac{a+2n}{4},k}u)(z',0,t'). $$
In \cite[Theorem 4.1]{MOZ14} we show that $D_{a,k}$ extends to a bounded operator $\dot{H}^{\frac{2-a}{2}}(H^{2n+1})\to\dot{H}^{-\frac{a}{2}-2k}(H^{2n-1})$.

\begin{theorem}\label{thm:MixedBVPCplx}
Let $k\in\NN$ and $-2n<a<-4k$.
\begin{enumerate}
\item For $f\in\dot{H}^{-\frac{a}{2}-2k}(H^{2n-1})$ the mixed boundary value problem
\begin{equation}
 \calL_au = 2k(2k+a)u, \qquad D_{a,k}u = f\label{eq:MixedBVPCplx}
\end{equation}
has a solution $u\in\dot{H}^{\frac{2-a}{2}}(H^{2n+1})$. More precisely, there exists a constant $c_{n,a,k}$ such that the operator $P_{a,k}:\dot{H}^{-\frac{a}{2}-2k}(H^{2n-1})\to\dot{H}^{\frac{2-a}{2}}(H^{2n+1})$ given by
$$ P_{a,k}f(z,t) = c_{n,a,k}\int_{H^{2n-1}}\frac{|z_n|^{-a-2k}}{|(z,t)^{-1}\cdot(z',0,t')|^{2n-a-4k}}f(z',t')\,d(z',t') $$
constructs a solution $u=P_{a,k}f$ of \eqref{eq:MixedBVPCplx}.
\item The operator $P_{a,k}$ is isometric (up to a constant), i.e. there exists a constant $C>0$ such that
$$ \|P_{a,k}f\|_{\dot{H}^{\frac{2-a}{2}}(H^{2n+1})}^2 = C\|f\|_{\dot{H}^{-\frac{a}{2}-2k}(H^{2n-1})}^2. $$
\end{enumerate}
\end{theorem}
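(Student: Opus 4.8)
The plan is to realize $P_{a,k}$ as a scalar multiple of the transpose symmetry breaking operator $B_{\mu,\nu}$ for a suitable pair of parameters, paralleling the Dirichlet case of Theorem~\ref{thm:PoissonTransformsAsSymBreakingOps} but with the trace map $T$ replaced by the differential restriction operator $D_{a,k}$. First I would fix $\mu=\frac{a-2}{2}$ and $\nu=2k+\frac{a}{2}$. With $\rho=n+1$ and $\rho'=n$, a direct reading of the exponents in the kernel $K_{-\mu,-\nu}$ of Section~\ref{sec:SymBreakingOps} shows that the $|z_n|$-power equals $-a-2k$ and the denominator power equals $2n-a-4k$, so $B_{\mu,\nu}$ carries exactly the integral kernel claimed for $P_{a,k}$. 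Since $2(\mu+1)=a$, Corollary~\ref{cor:SolutionsOnDistributions} then gives that every element of the image of $B_{\mu,\nu}$ solves $\calL_au=\lambda u$ with $\lambda=-((\mu+\rho)-(\nu+\rho'))((\mu+\rho)+(\nu-\rho'))=-(-2k)(a+2k)=2k(2k+a)$, the required eigenvalue. This settles the differential equation in part~(1) for any choice of normalization.

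For the boundary condition and the isometry I would use that $D_{a,k}$ is $G'$-intertwining. By its construction in \cite{MOZ14} the operator $D_{a,k}\colon I_\mu^\infty\to J_\nu^\infty$ is a covariant differential symmetry breaking operator, and the homogeneity relation $\mu+\rho=(\nu+\rho')-2k$ — which holds for the above parameters — is precisely the compatibility making it intertwine $\pi_\mu^\infty|_{G'}$ and $\tau_\nu^\infty$. For $-2n<a<-4k$ one checks $\mu\in(-\rho,\rho)$ and $\nu\in(-\rho',\rho')$, so $(\pi_\mu,I_\mu)$ and $(\tau_\nu,J_\nu)$ are irreducible unitary complementary series representations. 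The adjoint $D_{a,k}^*\colon J_\nu\to I_\mu$ is then $G'$-intertwining, so by Schur's Lemma $D_{a,k}D_{a,k}^*=c\,\id$ on the irreducible $J_\nu$ with $c>0$, and $D_{a,k}^*$ is a scalar multiple of an isometry; this is the mechanism behind part~(2).

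It remains to identify $D_{a,k}^*$ with $B_{\mu,\nu}$ and to normalize. Both are $G'$-intertwining operators $\tau_\nu\to\pi_\mu^{-\infty}|_{G'}$, so Fact~\ref{fct:MultOneThms} forces $B_{\mu,\nu}=\gamma\,D_{a,k}^*$ for a scalar $\gamma$. Setting $c_{n,a,k}:=(\gamma c)^{-1}$ gives $P_{a,k}=c_{n,a,k}B_{\mu,\nu}=c^{-1}D_{a,k}^*$, whence $D_{a,k}P_{a,k}=c^{-1}D_{a,k}D_{a,k}^*=\id$, i.e.\ $D_{a,k}(P_{a,k}f)=f$, proving the boundary condition in part~(1); and $\|P_{a,k}f\|_{\frac{2-a}{2}}^2=c^{-2}\langle D_{a,k}D_{a,k}^*f,f\rangle=c^{-1}\|f\|^2$, so part~(2) holds with $C=1/c>0$. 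The integral-kernel description of $P_{a,k}$ is inherited from $B_{\mu,\nu}$, and the mapping $\dot{H}^{-\frac{a}{2}-2k}(H^{2n-1})\to\dot{H}^{\frac{2-a}{2}}(H^{2n+1})$ is exactly the isometry just produced.

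The main obstacle is nondegeneracy at these special parameters. The family $A_{\mu,\nu}$, hence $B_{\mu,\nu}$, is only meromorphic, and the pair $(\mu,\nu)$ lies on the ``integral'' locus $\nu+\rho'=\mu+\rho+2k$ where $D_{a,k}$ is supported, so one must rule out $\gamma=0$, i.e.\ verify that $B_{\mu,\nu}$ does not vanish identically. I would resolve this — and simultaneously pin down $c_{n,a,k}$ — by evaluating $P_{a,k}$ on the $K'$-invariant vector $f(z',t')=((1+|z'|^2)^2+t'^2)^{-\frac{a+4k+2n}{4}}$ of $\tau_\nu$, as in Section~\ref{sec:ExplicitInvBdyValuesCplx}: the stereographic projection reduces $P_{a,k}f$ to a ${_2F_1}$ in the variable $4|z_n|^2/((1+|z|^2)^2+t^2)$ whose nonvanishing boundary value forces $\gamma\neq0$ and computes the constant. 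A fully Fourier-analytic alternative, along the lines of Sections~\ref{sec:UniquenessCplx}--\ref{sec:IsometryCplx}, is also available: the equation $\calL_au=2k(2k+a)u$ yields a three-term recursion for the operators $T_{\mu,k,\ell}$ whose admissible $L^2$-solution is again a ratio of Pochhammer symbols, and summing via \eqref{eq:IdentitySumPochhammerQuotient} delivers both the boundary identity and the explicit constant $C$.
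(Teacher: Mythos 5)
Your proposal is correct and follows essentially the same route as the paper: with $\mu=\frac{a-2}{2}$, $\nu=\frac{a}{2}+2k$, the paper likewise uses the $G'$-intertwining property of $D_{a,k}$ from \cite{MOZ14}, Schur's Lemma, and the multiplicity one theorem (Fact~\ref{fct:MultOneThms}) to identify $D_{a,k}^*$ with a scalar multiple of $B_{\mu,\nu}$, together with Corollary~\ref{cor:SolutionsOnDistributions} for the eigenvalue $2k(2k+a)$, and then normalizes so that $D_{a,k}\circ P_{a,k}=\id$. Your additional care in ruling out the identical vanishing of $B_{\mu,\nu}$ at these special parameters (via evaluation on the $K'$-invariant vector) addresses a point the paper's argument passes over silently; it is a worthwhile supplement rather than a different method.
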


Clearly part (1) of the theorem implies that $2k(2k+a)$ is an eigenvalue of $\calL_a$ on $\dot{H}^{\frac{2-a}{2}}(H^{2n+1})$ as claimed in Theorem~\ref{thm:DirichletCplx}~(1).

We prove this theorem in the next subsection.

\subsection{Discrete components in restrictions of complementary series}

Let $\mu=\frac{a-2}{2}$ and $\nu=\frac{a}{2}+2k$ so that $I_\mu=\dot{H}^{\frac{2-a}{2}}(H^{2n+1})$ and $J_\nu=\dot{H}^{-\frac{a}{2}-2k}(H^{2n-1})$. In \cite[Theorem 5.2]{MOZ14} we show that the operators $D_{a,k}:I_\mu\to J_\nu$ are intertwining the representations $\pi_\mu|_{G'}$ and $\tau_\nu$ of $G'$, taking the role of the trace map in Section~\ref{sec:TraceMaps}. Just as in the case of the trace map one shows the following statements:
\begin{enumerate}
\item The adjoint operator $D_{a,k}^*:J_\nu\to I_\mu$ embeds $(\tau_\nu,J_\nu)$ isometrically as a subrepresentation of $(\pi_\mu|_{G'},I_\mu)$ and is hence up to a constant equal to the symmetry breaking operator $B_{\mu,\nu}$.
\item The operator $\calL_a$ acts on the image of $B_{\mu,\nu}$ by the scalar
$$ -((\mu+\rho)-(\nu+\rho'))((\mu+\rho)+(\nu-\rho')) = 2k(2k+a). $$
\item The composition $D_{a,k}\circ B_{\mu,\nu}:J_\nu\to J_\nu$ is a scalar multiple of the identity.
\end{enumerate}

Now we are ready to prove Theorem~\ref{thm:MixedBVPCplx}.

\begin{proof}[{Proof of Theorem~\ref{thm:MixedBVPCplx}}]
By (1) and (3) there exists a constant $c_{n,a,k}$ such that the operator $P_{a,k}:=c_{n,a,k}\cdot B_{\mu,\nu}$ has the property that $D_{a,k}\circ P_{a,k}=\id$. Further, by (2) the image of $P_{a,k}$ consists of eigenfunctions of $\calL_a$ to the eigenvalue $2k(2k+a)$, and hence $P_{a,k}$ constructs solutions to the mixed boundary value problem \eqref{eq:MixedBVPCplx}. This shows Theorem~\ref{thm:MixedBVPCplx}~(1). Isometry of $P_{a,k}$ (up to a constant) then follows from (1).
\end{proof}

\bibliographystyle{amsplain}
\bibliography{bibdb}

\end{document}